\theoremstyle{plain}
    \newtheorem{theorem}{Theorem}
     \newtheorem{theoremnum}{Theorem}[section]
    \newtheorem{corollary}[theoremnum]{Corollary}
    \newtheorem{proposition}[theoremnum]{Proposition}
    \newtheorem{lemma}[theoremnum]{Lemma}
    \newtheorem{conjecture}[theoremnum]{Conjecture}
    \newtheorem{scholium}[theoremnum]{Scholium}
    \newtheorem{question}[theoremnum]{Question}
\newtheorem{claim}{Claim}
\theoremstyle{definition}
    \newtheorem{definition}[theoremnum]{Definition}
       \newtheorem{example}[theoremnum]{Example}
\theoremstyle{remark}
    \newtheorem{remark}[theoremnum]{Remark}
  \def\CC{{\mathbb C}} \def\DD{{\mathbb D}}
\def\EE{{\mathbb E}} \def\FF{{\mathbb F}} \def\GG{{\mathbb G}} \def\HH{{\mathbb H}}
 \def\NN{{\mathbb N}}  \def\PP{{\mathbb P}}
\def\QQ{{\mathbb Q}} \def\RR{{\mathbb R}} \def\SS{{\mathbb S}} \def\TT{{\mathbb T}}
 \def\ZZ{{\mathbb Z}}
\newcommand{\tA}{\mathtt{A}}\newcommand{\tB}{\mathtt{B}}
\def\cA{{\mathcal A}}   \def\cM{{\mathcal M}} \def\cS{{\mathcal S}}
\def\cB{{\mathcal B}}    
\def\cC{{\mathcal C}}  \def\cI{{\mathcal I}} \def\cO{{\mathcal O}} \def\cU{{\mathcal U}}
  \def\cJ{{\mathcal J}}  
\def\cE{{\mathcal E}}    
  \def\cL{{\mathcal L}} \def\cR{{\mathcal R}}
\def\supp{\operatorname{supp}}
\def\GL{\operatorname{GL}}
\def\SL{\operatorname{SL}}
\def\SU{\operatorname{SU}}
\def\SO{\operatorname{SO}}
\def\tr{\operatorname{tr}}
\def\Homeo{\operatorname{Homeo}}
\def\dist{\operatorname{dist}}
\def\Orb{\operatorname{Orb}}
\def\Id{\operatorname{Id}}
\title{Rotation numbers of perturbations of smooth dynamics}
\author[Gourmelon]{}
\subjclass{}
 \keywords{}
 \email{}
\thanks{We would like to thank Matheus for bringing to our knowledge the problem from which this paper originated. We would also like to thank Jairo Bochi for interesting conversations. Finally, we would like to thank the Universit\'e d'Orsay, LMO, where part of this paper was written.}
\begin{document}
\maketitle

\centerline{\scshape }
\medskip
{\footnotesize
 \centerline{Nicolas Gourmelon}

 \centerline{Universit\'e de Bordeaux}
 \centerline{CNRS, Bordeaux INP, IMB, UMR 5251, F-33400 Talence, France}
 } 

\bigskip


\renewcommand{\abstractname}{Abstract}

\begin{abstract}
We show how the small perturbations of a linear cocycle have a relative rotation number associated with an invariant measure of the base dynamics an with a $2$-dimensional bundle of the finest dominated splitting (provided that some orientation is preserved). 
Likewise small perturbations of diffeomorphisms have a relative rotation number associated with an invariant measure supported in a  hyperbolic set and with a $2$-dimensional bundle as above.

The properties of that relative rotation number allow some steps towards dichotomies between complex eigenvalues and dominated splittings in higher dimensions and higher regularity. 
We also prove that generic smooth linear cocycles above a full-shift (and actually above infinite factors of transitive subshift of finite type) admit a periodic point with simple Lyapunov spectrum.
\end{abstract}

\maketitle

\section{Introduction}
The idea of using rotation numbers in the study of higher dimensional linear cocycles is not new: Bonatti and Viana~\cite{BV} suggested using them to prove that generic smooth $\SL(n,\RR)$-cocycles over a full shift admit a periodic point with simple Lyapunov spectrum.\footnote{\label{f.simplespectrumdiffeos} This is folklore for diffeomorphisms as an application of the Sternberg linearization lemma: precisely, if  $K$ is an infinite basic hyperbolic set of a smooth diffeomorphisms $f$, then there are arbitrarily small smooth perturbations of $f$ that have a periodic point with simple Lyapunov spectrum in the continuation of $K$. } Carlos Matheus however pointed to us the difficulty of making that argument rigorous. 

The aim of the present paper is to introduce a new notion to fill that gap. It turns out that that rotation number finds many other applications, some of them towards a smooth generic dichotomy between complex eigenvalues and dominated splittings. 

We now give a rough idea of that rotation number. Consider a hyperbolic set $X$ of a diffeomorphism $f$, or alternatively, a linear cocycle $\cA$ fibering on a homeomorphism $T\colon X \to X$.
Consider an invariant measure $\mu$ supported in $X$. Assume that a dominated splitting over the support of $\mu$ admits a $2$-dimensional bundle $E$ along which $Df$, resp. $\cA$, preserve some continuous orientation. 

Given a small $C^1$-path $f_t$ starting at $f$, resp.  $C^0$-path of cocycles $\cA_t$ starting at $\cA$ (all fibering above $T$), the bundle $E$ has a corresponding continuation $E_t$. We see in this paper that the path has a well-defined {\em rotation number above $\mu$ along $E_t$}. That rotation number being invariant under homotopies with fixed extremities, and the spaces of systems we consider being locally simply connected, one can define the {\em relative rotation number} of a perturbation of $f$, resp. $\cA$, {\em along $E$}. From the properties of that rotation number, we deduce properties of generic smooth dynamics:
\begin{itemize}
\item the $C^r$-generic $\SL(n,\RR)$-cocycles over a full shift has a periodic point with simple Lyapunov spectrum, that is, the eigenvalues of the first return linear map have pairwise distinct moduli. This actually holds more generally for cocycles above factors of transitive subshifts of finite type (provided $X$ is infinite).
\item under an assumption of abundance of periodic points in $X$, we make some steps towards a $C^r$-generic dichotomy between dominated splittings and the existence of periodic points with complex eigenvalues. 
\item in the general setting, one may consider the structural $C^r$-stability of the projective cocycle. We prove in some cases that structural $C^r$-stability imply the existence of dominated splittings of all indices.
\end{itemize}

\subsection{Herman's rotation number}
Let $X$ be a compact metric space, let $T\colon X\to X$ be a continuous (not necessarily homeomorphic) transformation, and let $\mathbb{E}=\sqcup_{x\in X} \EE_x$ be a continuous oriented circle bundle with base $X$. 
We consider consider fibered continuous transformations $f\colon \mathbb{E} \to \mathbb{E}$ that project on $T$ and that send the fiber $\EE_x$ on $\EE_{Tx}$ by an orientation-preserving homeomorphism. By a slight abuse of terminology, we say that $f$ is a {\em skew-products (of positive circle homeomorphisms) over base dynamics $T$}. We denote the set of such $f$ by $Sk^+_{\EE,T}$ and endow it with the topology of uniform convergence.

Suppose that $\mathbb{E}=X\times \SS^1$, a skew-product $f$ is homotopic to $(T,\Id)$ within $Sk^+_{\EE,T}$, and $\mu$ is an ergodic measure of $T$ supported in a connected component of $X$. Then there is a well-defined {\em fibered rotation number} of $f$ above $\mu$. It can be seen as an element of the circle $\SS^1$ (see Herman~\cite{H, JS}). The rotation number is a cohomology invariant.

 Following classical terminology, the skew-product $f$ is called {\em mode-locked} above $\mu$ if any skew-product $g$ with same base dynamics $T$ and $C^0$-close enough to $f$ has the same rotation number.  Answering a question of Herman, when $T$ is an irrational rotation of the circle, J\"ager and Bjerkl\"ov~\cite{BJ} characterized mode-locking by the existence of a {\em trapping annulus}, that is, an annulus $S\subset \EE$ whose boundary circles project on $X$ by a finite cover, such that $f^n(S)\Subset S$ for some iterate $n\in \NN$, and such that $f^k(S)\cap S=\emptyset$, for all $0<k<n$.

\subsection{Rotation numbers of paths} 
The objects presented in this section are formally defined in \cref{s.rotmorphism.modelock,s.varyingbundles}.

\subsubsection{For skew-products of circle homeomorphisms}\label{s.rotnumber.isotopies}
If the compact base $X$ is not connected, or if the skew-product $f$ is not isotopic to $\Id$, then $f$ has no intrinsically defined rotation number. However, given a path $\phi=f_t$ of skew-products in $Sk_{\EE,T}^+$ one may consider the path of $k$-th iterates and take its integrated winding number  above a $T$-invariant Borel finite measure $\mu$. This has an asymptotic arithmetic growth rate when $k\to +\infty$, which we call the {\em rotation number} $\rho(\mu,\phi)$ of the path $\phi$ above measure $\mu$. 

It depends only on the isotopy class with fixed extremities of $\phi$, it is linear in $\mu$ and additive for concatenation of paths, and we prove that it is a cohomology invariant, that is, the rotation number only depends on the path of cohomology classes $[f_t]$, where two skew-products are (continuously) {\em cohomologous} if they are conjugate through $h\in Sk^+_{\EE,\Id_X}$.\footnote{If $\mathbb{E}=X\times \SS^1$ then a skew-product $f$ identifies to a pair $(T,{\bf f})$ where 
${\bf f}\colon X\to G=\Homeo^+(\SS^1)$ is a continuous map. It is usually called a {\em $G$-cocycle above $T$}. 
A $G$-cocycle $g$ is then cohomologous to $f$ if there exists a continuous map ${\bf h}\colon X\to G$ such that ${\bf g}(x)={\bf h}(x)\circ {\bf f}(x)\circ {\bf h}^{-1}(x)$ for all $x\in X$.}

That  rotation number extends through projectivization to paths of orientation preserving linear cocycles on 2-dimensional bundles.
In the particular case of $\SL(2,\RR)$-cocycles, Avila and Krikorian~\cite{AK} define a "variation of the fibered rotation number" in a geometrical way. We see in \cref{a.avilakrikorian} that it coincides with ours up to multiplication by $-2$.  

Note that $Sk_{\EE,T}^+$ is locally simply connected, thus a small $C^0$-perturbation $g$ of $f$ in $Sk_{\EE,T}^+$ has a well-defined {\em rotation number 
relative to $f$} obtained by taking a small path $\phi$ joining $f$ to $g$. 
Extending a previous definition, we say that $f$ is {\em mode-locked over $\mu$} if small perturbations have zero relative rotation numbers, and we define a {\em trapping strip} to be a compact subset $S\subset \EE$ such that, for some $n\in \NN$,
\begin{itemize}
\item for all $x$, the set $S\cap \EE_x$ is a disjoint union of $n$ intervals, and each of them contains one of the $n$ intervals forming $f(S)\cap \EE_x$.
\item the map $x\mapsto S\cap \EE_x$ is continuous for the Hausdorff topology.
\end{itemize}
In particular, if the base $X$ is a circle, a trapping strip is a disjoint union of trapping annuli. In \cite{G2}, we generalize the characterization mode-locking of~\cite{BJ} to any minimal dynamics on the base:

\begin{theoremnum}[see \cite{G2}]\label{c.mode.locking} Let $f\in Sk_{\EE,T}^+$ be a skew-product  of positive circle homeomorphisms over a minimal base dynamics $T\colon X\to X$, and let $\mu$ be a $T$-invariant measure. Then the following are equivalent:
\begin{itemize}
\item[$(i)$] $f$ is mode-locked over $\mu$,
\item[$(ii)$] $f$ has a trapping strip.  \end{itemize}
\end{theoremnum}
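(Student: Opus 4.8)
The plan is to establish the two implications separately. The implication $(ii)\Rightarrow(i)$ is soft and rests only on the robustness of trapping strips, whereas $(i)\Rightarrow(ii)$ carries the real content: there the strategy is to follow the scheme of Bjerkl\"ov and J\"ager~\cite{BJ} for irrational rotations of the circle, but to replace the arithmetic rigidity of that base by arguments that use only minimality of $T$ together with the invariant measure $\mu$ and the elementary properties of $\rho(\mu,\cdot)$ recalled above.

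For $(ii)\Rightarrow(i)$, I would first observe that the trapping-strip condition is open in $f$: compactness of $S$ and of $X$ makes the fiberwise inclusions of the $n$ arcs of $f(S)\cap\EE_x$ into those of $S\cap\EE_x$ uniformly strict, so every skew-product $g$ that is $C^0$-close to $f$ still traps $S$ with the same $n$, and along a short path $\phi$ joining $f$ to $g$ one may choose a continuous family of trapping strips $S_t$. Next comes the combinatorial picture: by minimality of $T$ and continuity of $x\mapsto S_t\cap\EE_x$, the orientation-preserving fiber maps of $f_t$ induce one and the same cyclic permutation of the $n$ arcs, so $f_t^k$ carries each arc over $x$ strictly into one arc over $T^kx$, the cyclic shift depending on $k\bmod n$ alone and hence being independent of $t$. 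Since within an arc no winding is possible, the fibered winding of $f_t^k$ above $\mu$, read against the multi-section $\partial S_t$, does not depend on $t$; letting $k\to\infty$ yields $\rho(\mu,\phi)=0$. As $g$ was an arbitrary small perturbation of $f$, this shows $f$ is mode-locked over $\mu$.

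For $(i)\Rightarrow(ii)$ I would argue by contradiction: assume $f$ is mode-locked over $\mu$ but has no trapping strip, and produce an arbitrarily small path $\phi$ with $\rho(\mu,\phi)\neq0$. \emph{Step A (normalization).} Mode-locking first forces the fibered rotation number to be rational: passing to a finite cover of $X$ on which $\EE$ trivializes and $f$ becomes isotopic to $(T,\Id)$, an irrational rotation number above $\mu$ would not be locally constant under post-composition with constant fiber rotations --- the same devil's-staircase mechanism as for a single circle homeomorphism --- contradicting $(i)$. Hence, after such a finite cover (restricted to a minimal component), after replacing $f$ by an iterate, and after post-composing by a suitable constant fiber rotation, we may assume $f$ is isotopic to $(T,\Id)$ and has rotation number $0$ above $\mu$. \emph{Step B (a parabolic invariant set).} In this situation $f$ admits semicontinuous fiberwise upper and lower bounded invariant graphs. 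If some return map of $f$ sent a fiber interval compactly into its interior, then by minimality of $T$ and recurrence that trapped interval would saturate to a genuine trapping strip, contrary to hypothesis; so no such interval exists and one gets a compact $f$-invariant set $K\subset\EE$, projecting onto all of $X$ by minimality, along which the fiber return maps are non-contracting on both sides --- the invariant structure is everywhere parabolic. \emph{Step C (the perturbation).} Supported in a small neighbourhood of $K$ and acting only in the fiber direction, I would build a perturbation $g$ of $f$ that pushes the fiber across the contact locus of the two invariant graphs, so that the accompanying short path $\phi$ accumulates strictly positive winding above $\mu$, whence $\rho(\mu,\phi)>0$ --- contradicting mode-locking, and finishing the proof.

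The main obstacle is Step B. In~\cite{BJ} the base is a single irrational rotation, whose unique ergodicity and rigidity are used throughout, while here one has only minimality of $T$ and a single $T$-invariant measure $\mu$, possibly far from unique, with much less structured generic orbits. Extracting from ``no trapping strip'' a concrete, robustly parabolic invariant set $K$ --- robust enough that the localized fiber push of Step C genuinely changes the rotation number --- is where the new argument is needed; the normalization of Step A and the perturbation of Step C are comparatively routine adaptations of the one-dimensional picture.
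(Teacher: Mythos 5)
First, a point of reference: the paper does not prove \cref{c.mode.locking} at all --- it is quoted from the companion work~\cite{G2}, listed as work in progress --- so there is no in-paper argument to measure yours against, and the theorem must be assessed on the strength of your proposal alone. Your direction $(ii)\Rightarrow(i)$ is essentially right (granting that ``contains'' in the definition of a trapping strip means compact containment, as for trapping annuli): compactness gives a uniform gap in the fiberwise inclusions, so a single strip $S$ remains trapping for every $f_t$ along a short path $\phi$; the assignment sending an arc over $x$ to the arc over $T^kx$ containing its $f_t^k$-image is locally constant in $t$, hence constant; and therefore $w(\phi^{(k)}u)$ stays bounded by the arc lengths uniformly in $k$, giving $\rho(x,\phi)=0$ everywhere and so $\rho(\mu,\phi)=0$. (You do not need the ``cyclic permutation'' bookkeeping, which anyway presupposes a global labeling of the $n$ arcs that need not exist.)

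The implication $(i)\Rightarrow(ii)$, which is the entire content of the theorem, is not proved. You say so yourself: Step~B --- extracting from the absence of a trapping strip an invariant set along which a localized fiber push produces nonzero winding above $\mu$ --- is exactly the missing argument, and identifying where the difficulty sits is not the same as resolving it. Step~A is also shakier than you present it: an oriented circle bundle over a compact metric space need not trivialize over any finite cover; ``rational rotation number'' has no intrinsic meaning over a general minimal base, since there is no reference isotopy to $(T,\Id)$ (this is precisely why the paper works with relative rotation numbers of paths rather than absolute ones); and the devil's-staircase mechanism you invoke is the quasi-periodically forced statement of~\cite{BJ} whose generalization is the point at issue, so it cannot be cited as input. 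A route more consonant with the paper's toolkit would start from \cref{l.modelocking.increasing,l.rotationcriteria}: since $\supp\mu=X$ by minimality, mode-locking over $\mu$ is equivalent to the existence of a strictly increasing path $\phi^+$ and a strictly decreasing path $\phi^-$ at $f$ whose iterated winding numbers $w\bigl((\phi^{\pm})^{(N)}u\bigr)$ stay uniformly bounded over all $N$, $x$ and $u\in\EE_x$; the real task is to convert this uniform bounded-deviation property into $n$ continuously varying trapped arcs, and that construction is what is still owed.
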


\begin{remark}\label{r.trappingstri}
It is not clear how much the minimality assumption can be weakened. In the particular case where $f$ is the projectivization of a linear cocycle, we can relax it to asking that $T\colon X\to X$ has a measure of total support (see the comments after the statement of \cref{p.dominationmodelocking}). 
\end{remark}
%
%
 
\subsubsection{Rotation numbers of perturbations in differentiable dynamics}\label{s.generalrelrot}

%
%
%
%
%
%

Let $K$ be a hyperbolic set of a diffeomorphism $f$ and let $\mu$ be an $f$ invariant measure supported in $K$. Assume that $Df$ preserves orientation on an oriented\footnote{The bundles we  consider in this paper are all continuous. An oriented bundle means for us a bundle endowed with a continuous orientation.} $2$-dimensional bundle $E_f$ of a dominated splitting over $K$. Take a simply connected neighborhood $\cU$ of $f$ on which the dominated splitting admits a continuation and let $E\colon g\mapsto E_g$ be the continuation of $E_f$. 
We prove in \cref{s.varyingbundles} that any $g\in \cU$ has a well-defined {\em rotation number relative to $f$ above $\mu$ along  $E$ } 
$$\rho_{f,E}(\mu,g)\in \RR$$
which counts the integrated rotation number of $Dg$ relative to $Df$ along $E_g$. 
This function depends continuously on $g$ for the $C^1$-topology and on $\mu$ for the weak-* topology.

For linear cocycles,  no hyperbolicity assumption is needed. Given a vector bundle $V$ with compact base $X$ and some homeomorphism $T\colon X\to X$, we consider the set  of continuous linear cocycles fibering on 
$T$ and endowed with the usual topology (see \cref{s.defsmoothbundles}).  
Let $\cA$ be a cocycle that preserves orientation on an oriented $2$-dimensional bundle $E_\cA$ of a dominated splitting.  
Take a simply connected neighborhood $\cU$ of $\cA$ on which there is a continuation $E\colon \cB\mapsto E_\cB$ of $E_\cA$. 

Then for any $T$-invariant measure $\mu$, any linear cocycle $\cB\in \cU$ admits  a {\em rotation number relative to $\cA$ above $\mu$  along $E$} 
$$\rho_{\cA,E}(\mu,\cB)\in \RR.$$
Again, that rotation number is continuous.

\begin{remark} The function $\rho_{*,E}(\mu,\cdot)$ depends on the choice of $\cU$, however its germ at $*$ does not, and it still contains all relevant information for our purpose. 
Hence we will often omit $\cU$ and identify the relative rotation number to its germ at $*$.
\end{remark}

\begin{remark}
Considering the finest dominated splitting and the tuple $\underline{E}=(E_1,\ldots, E_k)$ of $2$-dimensional bundles on which an orientation is preserved, we may consider the compound relative rotation number $\rho_{*,\underline{E}}\colon \cU \to \RR^k$.
\end{remark}

\subsection{Structure of the paper}
\cref{s.domsplitdef} states our results on perturbative differentiable dynamics. 
\cref{s.rotmorphism.modelock} defines precisely rotation numbers for paths of skew-products and gives some of its properties.
 \cref{s.varyingbundles} extends those results to 2-dimensional bundles of dominated splittings.
We use it in \cref{s.mainth} to prove the main results on linear cocycles. 
\cref{s.restofproofs} is there for the sake of completeness and essentially contains short proofs of the $C^0$-versions of the conjectures in this paper.
 \cref{a.avilakrikorian} recalls how Avila and Krikorian built a 'variation of the rotation number' for $\SL(2,\RR)$-cocycles, we notice that it coincides with ours up to a factor $-2$.

\section{Definitions and statement of results}\label{s.domsplitdef}
 We postpone the formal definitions of rotation numbers and mode-locking to \cref{s.rotmorphism.modelock}. 

\subsection{Smooth vector bundles and linear cocycles}\label{s.defsmoothbundles} Let $(V,\|.\|)$ be a continuous Euclidean vector bundle over a compact metric space $X$. Denote by $V_x$ the fiber above the point $x\in X$.
In this paper, a (maximal rank) {\em linear cocycle} on $V$ is a continuous map $\cA\colon V\to V$ fibering over a continuous transformation $T\colon X\to X$ such that the restriction of $\cA$ to the fiber $V_x$ is an isomorphism $A(x)\colon V_x\to V_{Tx}$, for all $x\in X$. In the case where the bundle is trivial, that is, $V=X\times B$ for a vector bundle $B$, we may regard $\cA$ as a pair $(T,A)$ where $A\colon X\to \cL(B)$ is a continuous map from $X$ to the space of isomorphisms $\cL(B)$ of $B$. 

If $X$ is a subset of a smooth manifold $M$, then $V$ is called {\em smooth} if it is the restriction of a smooth bundle $\widetilde{V}$ with base some open set $Y\subset M$ containing $X$. Define the space $LC^r_{V,\Id_X}$ of $C^r$-cocycles fibering over $\Id_X$ as the set of $C^r$-sections of the smooth bundle
$$\bigsqcup_{x\in X} GL(V_x)=V\otimes_X V^{*},$$
 endowed with its natural $C^r$-topology. 
 
 %
%
%
%
%
%

\subsection{$C^r$-generic cocycles over subshifts .}\label{s.simplespectrum} \label{s.complexdominationdichotconje}
Recall that the (two-sided) {\em full shift} over alphabet $\{1,\ldots,n\}$ is the map 
\[\begin{array}{rlcl}
\sigma\colon& \{1,\ldots,n\}^\ZZ&\to& \{1,\ldots,n\}^\ZZ\\
&(\omega_i)_{i\in \ZZ}&\mapsto&(\omega_{i+1})_{i\in \ZZ}
\end{array}\]

A {\em subshift of finite type}, or {\em SFT}, is the restriction of that full shift to an invariant set $\Omega_\tA$ defined as follows: the {\em transition matrix} $\tA=(a_{ij})_{1\leq i,j\leq n}$ is a matrix with coefficients in $\{0,1\}$ such that each line contains at least one $1$, and $\Omega_\tA$ is the set of words $\omega=(\omega_i)_i$ such that $a_{\omega_i\omega_{i+1}}=1$ for all $i$. A continuous transformation $T\colon K \to K$  of a metric space $K$ is called {\em transitive} if there exists $x\in K$ such that the positive orbit of $x$ is dense in $K$.

A transformation $T\colon X\to X$ is a {\em factor} of a transformation $S\colon Y\to Y$ if there exists a continuous surjection $\pi\colon Y\to X$ such that the following diagram commutes:
 $$\xymatrix{Y\ar[rr]^S \ar[d]^\pi && Y\ar[d]^\pi \\
            X\ar[rr]^{T} && X}$$ 

\begin{remark}\label{r.oginoga}
A transitive one-sided SFT (defined as above replacing $\ZZ$ by $\NN$) is a factor of the two-sided SFT with same transition matrix.
\end{remark}

In the following we consider for the base dynamics continuous transformations $T$ that are factors of transitive SFTs. 
Let us recall a classical family of such dynamics. 

A {\em hyperbolic set} of a diffeomorphism $f$ of a manifold $M$ is a compact $f$-invariant set $K$ such that there is a  $Df$-invariant splitting $TM_{|M}=E^s \oplus E^u$ into continuous bundles such that for all nonzero vectors $u,v\in E^s,E^u$, the norms of the iterates $Df^n(u),Df^{-n}(v)$ tend exponentially fast to $0$ and $+\infty$, respectively.
That set $K$ is a {\em basic} hyperbolic set if moreover it is
\begin{itemize}
\item transitive,
\item {\em locally maximal}: there is an open set $U$ such that $K=\displaystyle{\bigcap_{n\in \ZZ}} f^nU$.
\end{itemize}
 The dynamics on a basic hyperbolic set is a factor of a transitive SFT~\cite[p.594, Proposition 18.7.7]{KH}. 
 Fisher actually proved in \cite{F1,F2} that for any neighborhood $V$ of transitive hyperbolic set $K$, there exists a hyperbolic set $K\subset L\subset V$ such that the restriction $f_{|L}$ is a factor of a transitive SFT.

We say that a linear cocycle has {\em simple Lyapunov spectrum} at a $T$-periodic point $x$ if the eigenvalues (counted with multiplicity) of the first return linear map on the fiber $V_x$ have pairwise distinct moduli.

\begin{theorem}\label{t.cocyclewithsimpleLyap}
Let  $T\colon X\to X$ be a (not necessarily invertible) factor of a transitive SFT with $X$ infinite and let $V$ be a smooth bundle over $X$. For all $r\geq 0$, there is an open and dense subset $\cO\subset LC^r_{V,T}$ of linear cocycles  of class $C^r$ that have simple Lyapunov spectrum at some $T$-periodic point. 
\end{theorem}

That is, for any $\cA\in \cO$, there is a periodic point $x\in X$ along which the first return linear map has eigenvalues with pairwise distinct moduli.
Such a result is folklore for basic hyperbolic sets of diffeomorphisms (see \cref{f.simplespectrumdiffeos}) thanks to the non-elementary Sternberg linearization theorem. 
That tool does not exist for linear cocycles. As said in \cite{BV}, some rotation theory is needed. We explain it in \cref{s.rotmorphism.modelock,s.varyingbundles}.

\begin{definition}
Given a linear cocycle $\cA\colon V\to V$ fibering over a continuous transformation $T\colon X\to X$, a {\em dominated splitting} is a  splitting $$V=E_1\oplus\ldots \oplus E_k$$ into $\cA$-invariant vector subbundles of constant dimension, such that  for some $N\in \NN$, for all $x\in X$ and all unit vectors $u\in E_{i,x}$,  $v\in E_{i+1,x}$ above $x$ and in consecutive bundles, one has
$$\|\cA^N u\|<\frac{1}{2}\|\cA^N v\|.$$
\end{definition}

We say that a dominated splitting into 2 bundles $V=E_1\oplus E_2$ {\em has index $i$} if $\dim E_1=i$.
Lyapunov spectrum simplicity at a periodic point corresponds to the existence of dominated splittings of all indices (equivalently of a dominated splitting into line bundles) along that periodic point.
A robust obstruction to the existence of a dominated splitting of index $i$ is the following object:

\begin{definition} Let $1\leq i<d$. A periodic point $x$ is {\em $i$-elliptic} if its Lyapunov exponents $\lambda_1\leq \ldots \leq \lambda_d$  at $x$  satsify:
\begin{itemize}
\item $\lambda_i=\lambda_j$ if and only if $i=j+1$,
\item the eigenvalues corresponding to $\lambda_i$ and  $\lambda_{i+1}$ are complex (non-real) and conjugate.
\end{itemize}
\end{definition}

We say that a linear cocycle or a diffeomorphism {\em satisfies the ellipticity/domination dichotomy} if for each index $0< i<d$, where $d=\dim V$ or $\dim M$, we have the following sharp dichotomy:
\begin{itemize}
\item either there is a global dominated splitting of index $i$,
\item or there is an $i$-elliptic periodic point.
\end{itemize}

\begin{remark}
In \cref{t.cocyclewithsimpleLyap} we do not need to assume that the base transformation $T$ is invertible. The following conjectures and results about linear cocycles however only deal with cocycles fibering over a homeomorphism $T$.
\end{remark}

\begin{conjecture}[ellipticity/domination dichotomy for cocycles]\label{c.complexdominationdichotconje}
Let a homeomorphism $T\colon X\to X$ be a factor of a transitive SFT and let $V$ be a smooth vector bundle on $X$. 
For each $r\geq 0$, there is an open and dense subset $\cO\subset LC^r_{V,T}$ of linear cocycles that satisfy the ellipticity/domination dichotomy.
\end{conjecture}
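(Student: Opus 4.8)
\medskip
\noindent\emph{A possible approach.} The plan is to separate an elementary openness part from a perturbative density part, the latter resting on the relative rotation number and on the mode-locking dictionary of \cref{c.mode.locking}. Fix an index $0<i<d$. The set $\cD_i\subset LC^r_{V,T}$ of cocycles carrying a global dominated splitting of index $i$ is $C^0$-open, hence $C^r$-open. The set $\cE_i$ of cocycles with an $i$-elliptic $T$-periodic point is $C^r$-open as well: a perturbation of the cocycle does not move the base periodic point, the first return linear map varies continuously with the cocycle, and $i$-ellipticity of a fixed linear map is an open condition --- the two central eigenvalues, being complex conjugate, have equal modulus; by the gaps $\lambda_{i-1}<\lambda_i$ and $\lambda_{i+1}<\lambda_{i+2}$ they span an isolated $2$-dimensional generalized eigenspace; and having non-real spectrum is open inside a $2\times 2$ block. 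Thus $\cO_i:=\cD_i\cup\cE_i$, which is exactly the index-$i$ dichotomy locus (the two cases being mutually exclusive, since a global index-$i$ dominated splitting forces $\lambda_i<\lambda_{i+1}$ at every periodic point), is open. Since there are only finitely many indices, it suffices to prove that each $\cO_i$ is dense and to set $\cO=\bigcap_{0<i<d}\cO_i$.

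For density, fix $\cA\notin\overline{\cD_i}$, so that a whole $C^r$-neighborhood of $\cA$ has no dominated splitting of index $i$, and seek a $C^r$-small perturbation in $\cE_i$. The first, and decisive, step is a Franks-type perturbation along a periodic orbit: exploiting that $T$ is basic hyperbolic (dense periodic points, shadowing) and that index-$i$ domination fails robustly, perturb $\cA$ inside its neighborhood to a cocycle $\cA'$ possessing a periodic point $p$ along whose orbit the index-$i$ domination is broken in a borderline way --- the first return map on the pertinent oriented $2$-dimensional block $E$ of the dominated splitting over the orbit is, after rescaling by a positive scalar, a parabolic element $R\in\SL(2,\RR)$ (if it could only be made elliptic, we are already done), while the outer gaps $\lambda_{i-1}<\lambda_i$ and $\lambda_{i+1}<\lambda_{i+2}$ persist. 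The second step is the crossing, and this is where the rotation number does the work: take a short path $(\cB_t)$ issuing from $\cA'$, supported near the orbit of $p$ and rotating inside $E$; by the continuity of $\rho_{\cA',E}(\de_p,\cdot)$ (with $\de_p$ the $T$-invariant probability on the orbit of $p$) and its chain rule, the relative rotation number moves continuously along the path, and along that orbit it tracks the winding of the return map on $E$, so a suitable small value of it pulls $|\tr R|$ below $2$, producing a pair of non-real conjugate central eigenvalues --- an $i$-elliptic point --- without disturbing the outer gaps. That the crossing is unobstructed is equivalent to the perturbation not being mode-locked over $\de_p$ along $E$; were it mode-locked, \cref{c.mode.locking} in the form valid for projectivized cocycles over a base carrying a measure of total support (see \cref{r.trappingstri} and the comments after \cref{p.dominationmodelocking}) would furnish a trapping strip, i.e.\ a dominated splitting of $E$ over the orbit, contradicting the parabolicity arranged at the first step.

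The main obstacle is precisely that first step: a Franks-type lemma in regularity $C^r$ with $r\ge 1$, turning a prescribed product of small linear perturbations along a periodic orbit into a globally $C^r$-small perturbation of the cocycle, with $C^r$-control not deteriorating as the orbit becomes shorter. For diffeomorphisms this is exactly where the Sternberg linearization theorem is invoked, and there is no linearization of that kind for general linear cocycles --- which is why the statement is offered as a conjecture rather than a theorem. The $C^0$ case, where arbitrary perturbations along an orbit cost nothing, is settled by the classical arguments recalled in \cref{s.restofproofs}; and under the additional hypothesis of an abundance of $T$-periodic points the scheme above can be pushed far enough to yield the partial results of \cref{t.growuptheorem}. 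The rotation-number crossing of the second step is, by contrast, robust and insensitive to the regularity.
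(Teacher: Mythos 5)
You have read the situation correctly: the statement is a conjecture, the paper itself establishes it only for $r=0$, and what you offer is a strategy together with an accurate diagnosis of the obstruction, namely the absence of a Franks-type realization lemma for $C^r$-cocycles with $r\ge 1$. Your openness discussion and the reduction to density of each index-$i$ locus are sound. For the part that is actually proved, however, your sketch and the paper's argument diverge. The $C^0$ density proof in \cref{s.abcequivalence} does not pass through a parabolic intermediate followed by a rotation-number crossing: it applies \cref{l.BGV} to a periodic orbit along which the central bundle has no $N$-domination ($N$ large), obtaining directly an $\epsilon$-perturbation whose return map has two central eigenvalues of equal modulus, and then deals with the possible non-orientability of the central $2$-plane by passing to periodic orbits that shadow a homoclinic orbit twice, after which an elementary further perturbation produces the complex conjugate pair; no rotation number is needed there. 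Conversely, the rotation-number crossing you describe is essentially the engine of \cref{t.growuptheorem}, but there it is deliberately run over a measure of total support assembled from periodic measures rather than over a single periodic measure: the smooth perturbation is a rotation along a smooth bundle approximating the (merely H\"older continuous) invariant plane field over all of $X$, and the non-vanishing of the rotation number is then detected by finitely many periodic measures via continuity. That global formulation is precisely how the paper avoids having to localize a $C^r$-perturbation on a single orbit.

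Two further remarks on your first step, which conceals more than the Franks-type issue. First, robust failure of index-$i$ domination may occur inside a bundle of the finest dominated splitting of dimension $\ge 3$, or inside a non-orientable $2$-dimensional one; in those cases the relative rotation number along $E$ is not even defined, and reducing to an oriented $2$-dimensional central block with equal-modulus eigenvalues is exactly what \cref{l.BGV} accomplishes in the $C^0$ topology and what is missing for $r\ge 1$. Second, once you do stand at a periodic orbit whose central return map is parabolic, the full mode-locking dictionary of \cref{c.mode.locking} and \cref{p.dominationmodelocking} is overkill: over a single periodic measure, \cref{r.realeig} together with continuity of the eigenvalues already gives the crossing. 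So your proposal is a reasonable road map whose hard point is correctly located, but, as you acknowledge, it is not a proof, and the paper claims none beyond $r=0$.
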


The assumption on $T$ is one of an abundance of periodic point. This may compared with the hyperbolicity/bounded cocycles dichotomies of Avila, Bochi, Damanik~\cite{ABD} under diametrically opposite conditions: if $T$ is minimal on $X$ finite dimensional, then is an open and dense subset of continuous $SL(2,\RR)$-cocycles that are either hyperbolic or (continuously) cohomologous to an $\SO(2,\RR)$-cocycle.

\begin{remark}
If $T$ is an irrational rotation of the circle $\TT$, ellipticity can obviously not happen. And if a linear cocycle $\cA$ on  $V=\TT\times \RR^2$ fibers above $T$ and is not isotopic to $\Id$, then the  induced projective cocycle \[\PP\cA\colon \PP V:=\TT\times \PP(\RR^2)\to \PP V,\]
acts parabolically on the homology group $H_1(\PP V, \RR)$. In particular, it cannot leave invariant any section $s\colon X\to  \PP(\RR^2)$, that is, $\cA$ cannot leave invariant any $1$-dimensional bundle. Thus we have open sets of linear cocycles fibering above an irrational rotation that do not satisfy the ellipticity/domination dichotomy.
\end{remark}

It is tempting to weaken the assumptions on the homeomorphism $T$ in \cref{c.complexdominationdichotconje} to that of a density of periodic points. This fails however to take into account the possibility of orientation inversion phenomena, as seen in the following counter example:

\begin{remark}
The small $C^0$-perturbations in $LC_{V,T}$ of the following cocycle have both no dominated splitting and no elliptic periodic point: let $\cA=(T,A)$ be the linear cocycle on $V=[-1,1]\times \RR^2$ defined by $T=\Id_{[-1,1]}$ and 
$$A(x)=\left(\begin{array}{cc}
e^{-x}&0\\
0& -e^x
\end{array}\right).
$$ 
\end{remark}

A similar conjecture may be stated for diffeomorphisms. Indeed, as a consequence of the shadowing lemma, a hyperbolic set $K_f$ of a diffeomorphism admits a {\em continuation} $g\in \cU\mapsto K_g$ on a $C^1$-neighborhood $\cU$ of $f$. More precisely, there is a continuous map $h\colon K_f\times \cU\to M$ such that for all $g\in \cU$, the set $K_g:=h(K_f\times \{g\})$ is a hyperbolic set for $g$ and \[x\in K_f\mapsto h(x,g)\in K_g\] is a homeomorphism that conjugates the restricted dynamics $g_{|K_g}$ and $f_{|K_f}$. That continuation is uniquely defined on each connected component of $\cU$.

\begin{conjecture}[ellipticity/domination dichotomy for diffeomorphisms]\label{c.complexdominationdichotconjediffeo}
Let $r\geq 1$. Fix a basic hyperbolic set $K_f$ of a $C^r$-diffeomorphism $f$ and a $C^r$-neighborhood $\cU$ of $f$ on which a continuation $g\mapsto K_g$ is defined. 

Then there exists an open and ($C^r$) dense subset $\cO\subset \cU$ of diffeomorphisms $g$ such that the cocycles $Dg_{|K_g}$ satisfy the ellipticity/domination dichotomy.
\end{conjecture}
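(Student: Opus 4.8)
The plan is to run, index by index, a perturbative argument in the spirit of \cref{t.cocyclewithsimpleLyap} and of the grow-up theorem \cref{t.growuptheorem}, steering the perturbations with the relative rotation number of \cref{s.varyingbundles}. Write $d=\dim M$. For a fixed index $0<i<d$, the set of $g\in\cU$ admitting a dominated splitting of index $i$ on $K_g$ is $C^1$-open, and so is the set of $g$ possessing an $i$-elliptic periodic point at which the first return map is non-degenerate ($1$ is not an eigenvalue): such a point persists, its complex pair in slots $i,i+1$ stays complex, and the gaps $\lambda_{i-1}<\lambda_i$, $\lambda_{i+1}<\lambda_{i+2}$ are open conditions on the first-return derivative. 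Hence the set of $g$ realizing the alternative at index $i$ is open, and $\cO$, the intersection over $i=1,\dots,d-1$, is open. For density it then suffices to prove: for every $g\in\cU$, every $\varepsilon>0$ and every index $i$ there is $g'$ that is $\varepsilon$-$C^r$-close to $g$, equal to $g$ off an arbitrarily small neighbourhood of a single periodic orbit, and realizing the alternative at index $i$ (with the $i$-elliptic point, when needed, non-degenerate). Carrying this out successively for $i=1,\dots,d-1$ with rapidly decreasing $\varepsilon$'s and pairwise disjoint supports yields, by the openness above, a single $g'$ satisfying the full dichotomy.

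Fix $g$ and $i$, and assume $Dg|_{K_g}$ has no dominated splitting of index $i$ (otherwise there is nothing to do). Since $K_g$ is basic hyperbolic, the absence of a global index-$i$ splitting forces, via shadowing, periodic points $p$ whose index-$i$ domination inside the finest dominated splitting of $Dg$ over $\Orb(p)$ is arbitrarily weak: were that domination uniform along all periodic orbits, a limiting argument would recover an invariant index-$i$ splitting on all of $K_g$. On such a $p$ the slots $i,i+1$ are not separated, and an arbitrarily small perturbation supported near $\Orb(p)$ places them in a common $2$-dimensional block $F$ of the finest dominated splitting, flanked below and above, so that $\lambda_{i-1}<\lambda_i$ and $\lambda_{i+1}<\lambda_{i+2}$ hold for the perturbed orbit. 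Replacing $g$ by $g^2$ if necessary so that $Dg$ preserves an orientation on $F$, let $\mu_p$ be the periodic measure and $E$ the continuation of $F$. Inside $F$ there is, by construction, no dominated splitting, so by the domination--mode-locking link (\cref{p.dominationmodelocking}) and the characterization of mode-locking over the minimal base $\Orb(p)$ (\cref{c.mode.locking}) the first-return cocycle on $E$ is not mode-locked above $\mu_p$; equivalently, the relative rotation number $\rho_{g,E}(\mu_p,\cdot)$ of \cref{s.varyingbundles} is continuous, obeys the chain rule, is not locally constant, and in fact sweeps a full interval of values as one post-composes $g$ near $p$ with a small rotation in the $F$-direction. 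At a parameter where that rotation makes the first-return map on $E_p$ conjugate to a rotation--scaling, the map has a complex conjugate pair of eigenvalues and $\lambda_i=\lambda_{i+1}$ at $p$, so together with the flanking gaps $p$ is $i$-elliptic; avoiding the finitely many parameters where an off-block eigenvalue equals $1$ makes it non-degenerate. This is the mechanism of the grow-up theorem \cref{t.growuptheorem}.

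It remains to realize these perturbations --- all of them linear modifications of $Dg$ along $\Orb(p)$ --- by a genuine $C^r$-small perturbation of $g$. After a preliminary $C^r$-small perturbation we may assume $g$ has no resonances at $p$, so that Sternberg linearization makes $g$ $C^r$-conjugate to its linear part near $p$, and a linear modification localized near an orbit point whose forward and backward iterates leave that chart is implemented by a $C^r$-small perturbation of $g$. The serious obstacle is the passage from $C^1$ to $C^r$ with $r\ge2$: there is no $C^r$ analogue of the Franks / Bochi--Bonatti perturbation lemma, and a periodic orbit inside a basic hyperbolic set can return arbitrarily close to itself, so the linear adjustment that creates the $2$-dimensional block $F$ and then rotates inside it is genuinely \emph{distributed} along the orbit rather than localized at a single well-separated point, whereas Sternberg linearization controls only one periodic point at a time. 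This is the same difficulty that leaves the cocycle version \cref{c.complexdominationdichotconje} open, which is why both statements are conjectures. In $C^1$, where the perturbation lemmas are available, the one remaining delicate point --- guaranteeing that the coalescing pair of exponents is realized by a genuinely complex pair rather than, say, a real pair of opposite signs --- is exactly what the continuity and the interval-sweeping of $\rho_{g,E}(\mu_p,\cdot)$ are designed to control.
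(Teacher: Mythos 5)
The statement you are proving is stated in the paper as a \emph{conjecture}, not a theorem: the paper itself only claims the case $r=1$, as a consequence of the $C^0$ cocycle dichotomy (\cref{c.complexdominationdichotconje}, proved in \cref{s.abcequivalence}) together with Franks' lemma, and explicitly says that ``the real difficulty starts at higher regularities due to the lack of perturbative tools.'' Your proposal, to its credit, arrives at exactly the same place: your last paragraph concedes that the distributed linear modification along a recurrent periodic orbit cannot be realized by a $C^r$-small perturbation for $r\ge 2$, and that Sternberg linearization only controls one chart at a time. But that concession means the proposal is not a proof of the statement --- it is a correct account of why the statement is open. The $r=1$ portion of your sketch does follow the paper's intended route (absence of index-$i$ domination $\Rightarrow$ periodic orbits with arbitrarily weak domination via \cite{BGV}, a Franks-type perturbation to merge the moduli in slots $i,i+1$, then a further perturbation to make the pair complex), and your openness discussion is fine (indeed simpler than you make it: periodic points in $K_g$ are automatically hyperbolic, so no separate non-degeneracy condition is needed for persistence).

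Two concrete defects in the density step as written. First, ``replacing $g$ by $g^2$ if necessary so that $Dg$ preserves an orientation on $F$'' is not legitimate: the dichotomy is a statement about $g$, and if the first-return map on the $2$-plane $F$ over $\Orb(p)$ reverses orientation, its eigenvalues are real of opposite signs and no small perturbation makes them complex; passing to $g^2$ proves nothing about $g$. The paper's fix (in the proofs of \cref{t.cocyclewithsimpleLyap} and of the $C^0$ case of \cref{c.complexdominationdichotconje}) is to extend the splitting over a homoclinic orbit of $p$ and pass to a nearby periodic point shadowing that homoclinic orbit \emph{twice}, which forces the return map on the continuation of $F$ to preserve orientation. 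Second, the rotation-number machinery (\cref{p.dominationmodelocking}, \cref{r.realeig}, the sweeping argument of \cref{t.growuptheorem}) is the tool the paper develops precisely for the higher-regularity setting where one cannot freely prescribe the linear maps along the orbit; in the $C^1$ case it is unnecessary, since after Franks one can directly conjugate the return map on $F$ to a rotation--scaling. Invoking it in the $C^1$ step obscures the fact that for $r\ge 2$ even the \emph{monotone} paths of skew-products it requires are not known to be realizable by $C^r$-small perturbations of $g$ along a recurrent orbit --- which is the actual open problem.
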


We prove \cref{c.complexdominationdichotconje} for $r=0$ through classical techniques (see \cref{s.abcequivalence}). \cref{c.complexdominationdichotconjediffeo} for $r=1$ is then a straightforward consequence using the Franks' lemma~\cite{Fr}. See for instance~\cite{BCDG} for similar constructions. 

The real difficulty starts at higher regularities due to the lack of perturbative tools. Using the properties of rotation numbers we are nevertheless able to prove some cases, as in the theorem below. We recall first that a linear cocycle always admits a finest dominated splitting (that is, a dominated splitting which splits any other dominated splitting).

\begin{theorem}\label{t.growuptheorem} 
Let $r\geq 0$. Let $T\colon X\to X$ be a homeomorphism admitting a dense set of periodic points and let $V$ be a smooth vector bundle on $X$. 
Let $\cA$ be a $C^r$ linear cocycle on $V$ fibering over $T$. Let $E_1,\ldots, E_\ell$ be the $2$-dimensional bundles of the finest dominated splitting on which some orientation is preserved. 

Then in any $C^r$-neighborhood of $\cA$ there is a cocycle that has an elliptic point by restriction to the continuation of each of these bundles.
\end{theorem}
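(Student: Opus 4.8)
The plan is to use the relative rotation numbers $\rho_{\cA,E_j}(\mu,\cdot)$ as the perturbative coordinates along the bundles of the finest dominated splitting, one bundle at a time, and to turn a change in rotation number into the creation of a complex eigenvalue by restricting to a single periodic orbit. First I would fix, for each orientable $2$-dimensional bundle $E_j$ ($1\le j\le \ell$) on which $\cA$ preserves orientation, a $T$-periodic point $p_j$; density of periodic points gives these, and — after a first small perturbation supported near the finitely many orbits $\Orb(p_j)$ that does not destroy the finest dominated splitting — I may assume the $p_j$ have distinct orbits and that at each $p_j$ the restriction $\cA^{\pi(p_j)}|_{E_{j,p_j}}$ has real eigenvalues, say distinct moduli, so the $2$-bundle itself carries a dominated splitting along that orbit. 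Denote by $\delta_{p_j}$ the invariant probability equidistributed on $\Orb(p_j)$. The germ of $\rho_{\cA,E_j}(\delta_{p_j},\cdot)$ is then (up to the normalisation fixed in \cref{s.varyingbundles}) exactly the average amount of rotation that a perturbation adds to $Dg^{\pi(p_j)}$ inside $E_j$ along that orbit, and a perturbation supported in a small neighbourhood of $\Orb(p_j)$ acting only inside the plane $E_{j,p_j}$ by a rotation can make this germ take any prescribed small value while leaving $\rho_{\cA,E_i}(\delta_{p_i},\cdot)$ for $i\ne j$ unchanged (the supports are disjoint) and leaving the finest dominated splitting intact as long as the rotation angle stays small.

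The second step is the local dictionary between rotation number and ellipticity at a single periodic orbit. Along $\Orb(p_j)$ the return map inside $E_j$ is conjugate to a product of $2\times 2$ matrices; its eigenvalues are complex conjugate precisely when the matrix is ``elliptic'', i.e. conjugate to a similarity, and one passes from a hyperbolic $2\times 2$ return map (distinct real eigenvalues) to an elliptic one exactly by a one-parameter family that sweeps the rotation number through a half-integer's worth of winding. Concretely: on the orbit $\Orb(p_j)$ of period $n_j$, write $M=\cA^{n_j}|_{E_{j,p_j}}$ with distinct positive real eigenvalues; composing with a rotation $R_\theta$ inserted at $p_j$, the family $\theta\mapsto R_\theta M$ moves continuously from the original hyperbolic matrix to, for a suitable $\theta_*$ of size $O(1)$, a matrix with non-real eigenvalues (this is elementary: $\det(R_\theta M)>0$ is fixed while $\tr(R_\theta M)$ varies continuously over an interval that contains values with $|\tr|<2\sqrt{\det}$). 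I would choose $\theta_*$ small enough that the finest dominated splitting of $\cA$ persists along the whole family; since $E_j$ is one block of the finest splitting and $\Orb(p_j)$ is a single orbit, the domination between the other blocks is a robust open condition unaffected by a sufficiently $C^0$-small perturbation supported near $\Orb(p_j)$ acting inside $E_j$. Once $R_{\theta_*}M$ has complex eigenvalues we also need the remaining two exponent-gap conditions in the definition of $i$-ellipticity (the Lyapunov exponents of $E_j$ must be isolated from those of the adjacent blocks $E_{j\pm1}$): this is immediate, because the domination between $E_{j-1},E_j,E_{j+1}$ forces the corresponding exponents at $p_j$ to be strictly separated.

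The third step is to do all $\ell$ bundles simultaneously. Because the orbits $\Orb(p_1),\dots,\Orb(p_\ell)$ are pairwise disjoint, I can perform the perturbations of the second step in pairwise disjoint neighbourhoods; they commute and their effects on the $(\rho_{\cA,E_j})$-coordinates decouple, so the composed cocycle $\cB$ is $C^r$-close to $\cA$, still admits the continuation of each $E_j$ (the finest dominated splitting is preserved by each individual small perturbation, hence by all of them), and has $\cB^{n_j}|_{E_{j,p_j}}$ elliptic for every $j$, i.e. $p_j$ is an elliptic point of $\cB$ by restriction to the continuation of $E_j$. That yields the conclusion. The main obstacle I anticipate is the first step in the case $r\ge 1$: a genuinely $C^r$-small perturbation of the cocycle that realises a prescribed small change of the relative rotation number along one $2$-bundle while keeping the finest dominated splitting must be built carefully (a rotation ``inserted at a single point'' is really a bump-function-localised family of isomorphisms of $E_j$), and checking that the bump can be taken $C^r$-small — with $C^r$-size controlled by the rotation angle, uniformly — and that it does not leak enough to break domination between the other blocks, is where the quantitative work lies; this is exactly the content of the $C^r$-perturbation lemmas established for the rotation number in \cref{s.varyingbundles}, which I would invoke rather than reprove.
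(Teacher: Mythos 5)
Your Step 2 is the fatal gap, and it is exactly the difficulty the theorem is about. Having two distinct real eigenvalues is an \emph{open} condition on the return matrix $M=\cA^{n_j}|_{E_{j,p_j}}$; for a fixed periodic orbit of fixed period $n_j$, any sufficiently $C^0$-small perturbation of the cocycle changes $M$ only slightly, so the return map stays hyperbolic. Quantitatively, $R_\theta M$ has non-real eigenvalues only when $|\cos\theta|<2\sqrt{\lambda\mu}/(\lambda+\mu)$, so the required $\theta_*$ is bounded below by a constant depending on the eigenvalue gap of $M$ and is \emph{not} small --- your two requirements ``$\theta_*$ of size $O(1)$'' and ``$\theta_*$ small enough that the splitting persists'' are incompatible, and distributing the rotation along the orbit does not help, because the intermediate hyperbolic matrices absorb small rotations. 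Worse, your preliminary normalisation (arranging real eigenvalues of distinct moduli at $p_j$) gives $\cA|_{E_j}$ a dominated splitting over $\Orb(p_j)$, so by \cref{p.dominationmodelocking} the skew-product $\PP\cA|_{E_j}$ is \emph{mode-locked} over $\delta_{p_j}$: the germ of $\rho_{\cA,E_j}(\delta_{p_j},\cdot)$ is identically zero, and your claim that a local perturbation can make it take ``any prescribed small value'' is false. No small perturbation can create an elliptic point at a periodic orbit chosen in advance.

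What is missing is the global input that $E_j$ is a bundle of the \emph{finest} dominated splitting, hence $\cA|_{E_j}$ has no dominated splitting over all of $X$. The paper's proof runs as follows: build a measure $\mu=\sum\mu_n$ of total support out of periodic measures (with rational masses); by \cref{p.dominationmodelocking} and \cref{r.strictincreasing.varying.case}, the strictly increasing path $\cA_{\underline t}=\cR_{\underline t}\circ\cA$ obtained by rotating along a \emph{smooth approximation} $E_j^\epsilon$ of $E_j$ has strictly positive rotation number over $\mu$ for arbitrarily small $\underline t$; continuity in the measure transfers this to a truncation $\mu_0+\dots+\mu_N$; and \cref{r.realeig} then forces some periodic orbit among those (not chosen in advance, and typically of large period) to acquire non-real eigenvalues, since otherwise all the relative rotation numbers $\rho_{\cA,E_j}(\mu_k,\cA_{\underline t})$ would stay rational, hence locally constant, hence zero. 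Handling all $\ell$ bundles simultaneously is also not a matter of disjoint supports (the rotations $\cR_{\underline t}$ are global, and a perturbation creating ellipticity in one bundle could a priori create domination in another); the paper uses a Lefschetz-index argument on $\underline t\mapsto\Theta(\underline t)=\rho_{\cA,\underline E}(\mu,\cA_{\underline t})$ to find parameters where all $\ell$ rotation numbers are simultaneously irrational. None of these steps can be replaced by the local-at-a-fixed-orbit argument you propose.
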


This straightforwardly implies the following:

\begin{corollary}\label{t.complexdominationdichotconje}
Let $r\geq 0$.  Let $T\colon X\to X$ be a homeomorphism admitting a dense set of periodic points and let $V$ be a smooth vector bundle on $X$. 
There is a $C^r$-open and dense subset $\cO\subset LC_{V,T}$ of linear cocycles $\cA$ such that if each bundle of its finest dominated splitting
\begin{itemize}
\item either has dimension $1$,
\item or has dimension $2$ and $\cA$ preserves some orientation on it,
\end{itemize}
then $\cA$ satisfies the ellipticity/domination dichotomy. \end{corollary}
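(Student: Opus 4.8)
The plan is to deduce the corollary from \cref{t.growuptheorem} by a formal argument, invoking in addition only two standard facts: dominated splittings are robust under $C^r$-small perturbations, and a dominated splitting forces strictly ordered Lyapunov exponents (hence separated eigenvalue moduli) at every periodic orbit. Write $d=\dim V$. First I would observe that the set $\cD\subset LC_{V,T}$ of cocycles satisfying the ellipticity/domination dichotomy is $C^r$-open: for each of the finitely many indices $0<i<d$, the existence of a dominated splitting of index $i$ is open; and, since the base dynamics $T$ is fixed, so is the existence of a periodic point $p$ of $T$ that is $i$-elliptic for the cocycle, because the first return linear map at $p$ depends continuously on the cocycle and requiring it to have a non-real conjugate pair of eigenvalues whose common modulus is the $i$-th and $(i+1)$-th smallest, with strict gaps above and below, is an open condition. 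Hence $\cD$ is a finite intersection of finite unions of open sets, in particular open.

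Let $\cN\subset LC_{V,T}$ be the set of cocycles whose finest dominated splitting has each bundle either of dimension $1$, or of dimension $2$, orientable, and orientation-preserved; these are exactly the cocycles to which the conclusion of the corollary is meant to apply. The main step is to show $\cN\subset\overline{\cD}$. Fix $\cA\in\cN$ with finest dominated splitting $V=F_1\oplus\cdots\oplus F_k$, and let $E_1,\dots,E_\ell$ be the $2$-dimensional bundles among the $F_m$; by definition of $\cN$ these are all orientable with $\cA$ preserving orientation on them, so \cref{t.growuptheorem} provides $\cB$ arbitrarily $C^r$-close to $\cA$ that has, for each $j$, a periodic point $p_j$ elliptic for the restriction of $\cB$ to the continuation of $E_j$. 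Taking $\cB$ close enough to $\cA$, the dominated splitting continues to $V=F_1^{\cB}\oplus\cdots\oplus F_k^{\cB}$ (with $F_m^{\cB}$ the continuation of $F_m$). Now let $0<i<d$ be arbitrary. If $i$ is a break point of this splitting, then grouping the bundles before and after the break exhibits a dominated splitting of index $i$ for $\cB$. Otherwise $i$ lies strictly inside some $F_m^{\cB}$ of dimension $\geq 2$, which by the definition of $\cN$ must be the continuation of some $E_j$ and have dimension $2$, so that $i$ is its unique interior index. At $p_j$ the first return map of $\cB$ restricted to $F_m^{\cB}$ has a non-real conjugate pair of eigenvalues, so the two Lyapunov exponents of $F_m^{\cB}$ at $p_j$ coincide, say both equal $\lambda$; and since $F_m^{\cB}$ sits between its neighbors in a dominated splitting, at the periodic orbit $p_j$ the $i-1$ exponents lying below are all $<\lambda$ and the $d-i-1$ exponents lying above are all $>\lambda$. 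Hence $p_j$ is $i$-elliptic for $\cB$. As $i$ was arbitrary, $\cB\in\cD$; letting $\cB\to\cA$ yields $\cA\in\overline{\cD}$.

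Finally I would set $\cO:=\cD\cup\Int\big(LC_{V,T}\setminus\cN\big)$, which is $C^r$-open. It is dense: any cocycle either lies in $\Int\big(LC_{V,T}\setminus\cN\big)\subset\cO$, or else every neighborhood of it meets $\cN$, so it lies in $\overline{\cN}\subset\overline{\cD}\subset\overline{\cO}$. Moreover $\cO$ has the stated property: if $\cA\in\cO$ has finest dominated splitting of the prescribed form, i.e.\ $\cA\in\cN$, then $\cA\notin\Int\big(LC_{V,T}\setminus\cN\big)$, forcing $\cA\in\cD$, so $\cA$ satisfies the ellipticity/domination dichotomy.

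The substantive input is entirely \cref{t.growuptheorem}; the reduction above is routine. The only steps needing care are the index bookkeeping in the second paragraph — pairing the indices $0<i<d$ with bundles of the continued splitting and deriving $i$-ellipticity from the domination-induced spectral separation — and the small point-set-topology argument in the third paragraph, which is forced on us because $\cN$ itself need not be open (a bundle of dimension $\geq 3$ may split further under perturbation).
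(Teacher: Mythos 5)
Your proposal is correct and follows exactly the route the paper intends: the paper derives this corollary from \cref{t.growuptheorem} without spelling out the details, and your reduction (openness of the dichotomy, the index bookkeeping showing an elliptic point in a $2$-dimensional bundle of the continued splitting is $i$-elliptic for its interior index, and the $\cO:=\cD\cup\Int\bigl(LC_{V,T}\setminus\cN\bigr)$ device to handle non-openness of $\cN$) is a faithful and complete filling-in of that "straightforward" implication. No gaps.
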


In a work in progress~\cite{BBG}, we prove the same results for totally disconnected basic hyperbolic sets of diffeomorphisms, making some steps towards \cref{c.complexdominationdichotconjediffeo}.

\subsection{Dominated splittings and structural stability.}\label{s.structstab}
Let $r\geq 0$. Two issues with the ellipticity/domination dichotomy are that:
\begin{itemize}
\item it seems very difficult to prove in all generality for $r>0$,
\item it is not true for all dynamics. 
\end{itemize}
Instead, we may substitute to ellipticity a natural notion of structural stability for cocycles. 
The $C^r$-topology on the space of linear cocycles on $V$ naturally induces a $C^r$-topology on the space of projective cocycles.

\begin{definition}
Fix a homeomorphism $T\colon X\to X$.
A projective $C^r$-cocycle $\PP \cA\colon \PP V\to \PP V$ fibering over $T$ is called {\em structurally $C^r$-stable} if for all $\epsilon>0$ it admits a $C^r$-neighborhood $U_\epsilon$ among cocycles fibering on $T$ such that:

Seeing them as fibered homeomorphisms of $\PP V$, any $\PP\cB\in U_\epsilon$ is cohomologous to $\PP\cA$ through a homeomorphism that is $\epsilon$-close to ${\Id_{\PP V}}$, that is, there exists a homeomorphism $h$ of $\PP V$ that fibers on $\Id_X$, that is $\epsilon$-close to $\Id_{\PP V}$, and such that 
$$h\circ \PP \cA=\PP\cB \circ h.$$
\end{definition}
We say that a linear cocycle satisfies {\em $C^r$-robustly} some property if that property holds for any sufficiently $C^r$-close linear cocycle. 
Consider the three following statements:
\begin{itemize}
\item[(a)] The cocycle $\cA$ has dominated splittings of all indices $0< i<n$,\footnote{If the non-wandering set of $T$ is the whole base $X$, then the existence of dominated splittings of all indices is equivalent to projective hyperbolicity in the following sense: the chain-recurrent set of $\PP\cA$ is included in the union of projective bundles $\cup \PP E_i$, where the $E_i$-s are the bundles of the finest dominated splitting. Consider then the linear cocycle $T \PP\cA$ 
$$
\begin{array}{ccc}
T \PP V & \xrightarrow{T \PP \cA} & T\PP V\\
\downarrow & & \downarrow\\
\PP V & \xrightarrow{\PP \cA} & \PP V
\end{array}$$
induced on the tangent projective bundle $T\PP V=\cup_{v\in \PP V}T_v{\PP V_x}$ where $x\in X$ is the base point  on which $v$ projects. It is quite easy to check that the cocycle  
$T \PP\cA$ restricted to each compact set $ \PP E_i$  of  $ \PP V$ admits a hyperbolic decomposition if and only if $E_i$ has dimension $1$. 

In general, when the non-wandering set of $T$ is not necessarily $X$, domination of all indices could be seen as projective hyperbolicity and a strong transversality condition.}
\item[(b)] The projective cocycle $\PP\cA$ is structurally $C^r$-stable,
\item[(c)] The cocycle $\cA$ has $C^r$-robustly no elliptic point.
\end{itemize}
\cref{c.complexdominationdichotconje} implies that if the base homeomorphism $T$ is a factor of a transitive SFT, then we have the equivalence
$(a)\Leftrightarrow (c).$ In \cref{s.restofproofs}, using standard tools we obtain:
\begin{proposition}\label{p.abc} For each $r\geq 0$ we have the implications  $(a)\Rightarrow (b)\Rightarrow (c).$
\end{proposition}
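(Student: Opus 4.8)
The plan is to establish each implication using only standard hyperbolic-dynamics tools; no rotation numbers are needed here, which is why this is relegated to the ``classical arguments'' section.

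\emph{The implication $(a)\Rightarrow(b)$.} Assume $\cA$ has dominated splittings of all indices $0<i<n$, equivalently (by the footnote to statement (a)) that the projective cocycle $\PP\cA$ is projectively hyperbolic, i.e. its chain-recurrent set lies in $\bigcup_i \PP E_i$ and the induced tangent cocycle $T\PP\cA$ is hyperbolic over each $\PP E_i$ (contracting, since each $E_i$ is a line bundle). First I would recall that a dominated splitting persists under $C^0$-perturbation with a continuously-varying continuation, so each nearby $\PP\cB$ still has bundles $\PP E_i^\cB$ close to $\PP E_i$, and $\PP\cB$ restricted to $\bigcup_i \PP E_i^\cB$ is (uniformly) normally hyperbolic. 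The structural stability of $\PP\cA$ as a fibered homeomorphism then follows from the standard structural-stability machinery for normally hyperbolic / Axiom A-plus-strong-transversality systems, adapted to the fibered setting: one builds the conjugacy $h$ fiberwise over $X$ by the usual graph-transform / shadowing argument, using that outside a neighborhood of $\bigcup \PP E_i$ the dynamics of $\PP\cA$ is gradient-like (every orbit leaves and the transversality condition guarantees unique intersection of stable/unstable laminations), and that the hyperbolicity is uniform in $x$. The resulting $h$ fibers on $\Id_X$ and can be taken $\epsilon$-close to $\Id_{\PP V}$ by choosing the neighborhood $U_\epsilon$ small, since the shadowing/graph-transform estimates are quantitative. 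I would assemble this by citing the appropriate structural-stability theorem (Robbin--Robinson, or the skew-product version) rather than reproving it.

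\emph{The implication $(b)\Rightarrow(c)$.} Suppose $\PP\cA$ is structurally $C^r$-stable but, for contradiction, that $\cA$ is \emph{not} $C^r$-robustly free of elliptic points; then there is a $C^r$-small perturbation $\cB$ with an $i$-elliptic periodic point $x$ of period $p$. At an $i$-elliptic point, the first-return linear map on $V_x$ has a pair of complex (non-real) conjugate eigenvalues whose modulus is strictly dominant in its block, so the induced projective first-return map on the invariant projective $2$-sphere (circle) corresponding to that eigen-block is conjugate to an \emph{irrational or rational rotation composed with a contraction}, and in particular $\PP\cB$ has a periodic circle in $\PP V$ on which the first return is a (Möbius) rotation — an attracting or repelling ``elliptic'' periodic behaviour that is \emph{not} topologically conjugate to any hyperbolic periodic orbit. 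Concretely, I would extract a topological conjugacy invariant that distinguishes $\PP\cB$ from $\PP\cA$: either $\PP\cA$ already has such an elliptic periodic circle (then $\cA$ itself has an elliptic point, but then one perturbs to make the rotation number irrational and destroys any conjugacy to $\PP\cA$ itself by a second small perturbation, contradicting stability), or $\PP\cA$ has only hyperbolic periodic orbits in the relevant region, and then no homeomorphism conjugating $\PP\cB$ to $\PP\cA$ can exist because it would have to send the non-hyperbolic periodic circle of $\PP\cB$ to a hyperbolic periodic orbit of $\PP\cA$, contradicting invariance of the linearized behaviour (e.g. the local structure: neighbourhood of a hyperbolic periodic orbit is not homeomorphic, respecting the dynamics, to a neighbourhood of a rotational periodic circle). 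Either way $(b)$ fails, a contradiction.

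\emph{Main obstacle.} The delicate point is $(b)\Rightarrow(c)$: one must turn ``has an elliptic periodic point'' into a genuine \emph{topological} obstruction to fibered conjugacy, robust under the fact that $\PP\cA$ might \emph{also} have elliptic points. The clean way is to note that $i$-ellipticity is a $C^r$-open condition (complex dominant eigenvalues persist), so if $\cA$ is not robustly elliptic-free one actually gets a $C^r$-\emph{open} set of perturbations with an elliptic periodic point of a fixed period and index, and within such an open set the rotation number of the return map varies continuously and takes irrational values on a dense $G_\delta$; an irrational rotation on a periodic circle cannot be topologically conjugate to the return map of a nearby $\PP\cB'$ with a \emph{different} (rational, or different irrational) rotation number, so no single $\PP\cB$ in that open set can be structurally stable — in particular $\PP\cA$, being a $C^r$-limit of such non-stable cocycles, cannot be structurally stable either (stability is a $C^r$-open property of the cocycle and fails on a dense subset near $\cA$). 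This sidesteps the need to compare $\PP\cB$ directly with $\PP\cA$ and makes the argument purely local near the periodic circle. The $(a)\Rightarrow(b)$ direction, by contrast, is routine modulo invoking the fibered structural-stability theorem, and I would keep its proof to a short paragraph plus citations.
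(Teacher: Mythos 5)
Your argument for $(b)\Rightarrow(c)$ is essentially the paper's: an $i$-elliptic point produces a periodic, normally hyperbolic invariant circle in $\PP V$ on which the first return map is conjugate to a rotation; a further small perturbation changes that rotation angle; and since a near-identity fibered conjugacy must send such an isolated normally hyperbolic circle to its continuation, two such perturbations cannot both be conjugate to $\PP\cA$ by near-identity homeomorphisms. Note that you do not need the claim that structural stability is a $C^r$-open property (which is not obvious from the paper's definition, where the conjugacy must be $\epsilon$-close to the identity for \emph{every} $\epsilon$), nor the case distinction on whether $\PP\cA$ itself has elliptic points: stability of $\PP\cA$ forces all cocycles in $U_\epsilon$ to be mutually conjugate by near-identity homeomorphisms, and the rotation-number argument rules that out directly.

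The genuine gap is in $(a)\Rightarrow(b)$. There is no off-the-shelf theorem to cite: the base $X$ is an arbitrary compact metric space and $T$ an arbitrary homeomorphism (no hyperbolicity or manifold structure in the base), the perturbation is only $C^0$ in the cocycle, and the conjugacy is required to fiber over $\Id_X$ and to be uniformly $\epsilon$-close to the identity. Robbin--Robinson concerns $C^1$-perturbations of Axiom A plus strong transversality diffeomorphisms of compact manifolds and yields a conjugacy with no fibered constraint; the paper itself only remarks that it ``may be possible'' to adapt those fixed-point arguments, and instead gives a self-contained proof. What the paper actually does --- and what your sketch gestures at without carrying out --- is an induction on the number $d$ of one-dimensional bundles: after normalizing so that the continued bundles coincide with the original ones (conjugating by small linear cocycles over $\Id_X$), one takes an adapted metric, builds a strictly invariant cone field $C$ around $\PP W$ with $W=E_2\oplus\cdots\oplus E_d$, defines the conjugacy on the fundamental domain $C\setminus \PP\cA\, C$ fiber by fiber via an explicit affine reparametrization of the angle coordinate, extends it uniquely to $\PP V\setminus(\PP W\cup \PP E_1)$ by equivariance, and checks that it extends to a near-identity homeomorphism of $\PP V$ matching the inductively obtained conjugacy on $\PP W$. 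Some construction of this kind must be supplied; the citation does not do the work.
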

We also show in \cref{s.abcequivalence} the following converse implications: if $r=0$ then $(a)\Leftarrow (b)$ and if moreover $T$ is a factor of a transitive SFT, then $(a)\Leftarrow (c)$. The real difficulty starts with higher regularity. 
\begin{conjecture}{\bf (Stability):}\label{c.ba}
For each $r\geq 0$ we have the equivalence
$(a)\Leftrightarrow (b).$
\end{conjecture}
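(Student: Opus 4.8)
The implications $(a)\Rightarrow(b)\Rightarrow(c)$ are \cref{p.abc}, and $(b)\Rightarrow(a)$ is established for $r=0$ in \cref{s.abcequivalence}; so the content of the conjecture is the implication $(b)\Rightarrow(a)$ in positive regularity. The plan is to prove its contrapositive: if $\cA$ lacks a dominated splitting of some index $0<i<n$, equivalently if the finest dominated splitting $V=E_1\oplus\cdots\oplus E_k$ has a block $E$ with $\dim E\geq 2$, then $\PP\cA$ is not structurally $C^r$-stable. The obstruction will be a rotation number, combined with the classical fact that the Poincar\'e rotation number of a circle homeomorphism is a topological conjugacy invariant: a fibered conjugacy between projective cocycles that is $C^0$-close to the identity must send $\PP E$ to the continuation $\PP E_\cB$ (by uniqueness of dominated continuations), and over a $T$-periodic orbit it restricts to a conjugacy between the first-return circle homeomorphisms on $\PP E$ and on $\PP E_\cB$, which therefore have equal rotation numbers.

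Consider first the most tractable situation: periodic points of $T$ are dense and $E$ is $2$-dimensional with $\cA$-invariant orientation. Over each $T$-periodic orbit $q$ of period $m$, the first-return map $\cA^{(m)}_q$ restricted to $E_q$ is an orientation-preserving linear automorphism, and its action on $\PP E_q\cong\RR P^1$ has a well-defined rotation number, which is nonzero precisely when the eigenvalues on $E_q$ are non-real. By \cref{t.growuptheorem}, arbitrarily $C^r$-close to $\cA$ there is a cocycle $\cB$ with an elliptic periodic point $q$ in the continuation $E_\cB$, hence with a nonzero return rotation number over $q$. A further $C^r$-small perturbation supported near the finite orbit of $q$ can be chosen so as to change the argument of the complex eigenvalue of the return map on the (slightly moved) continuation of $E$, producing a cocycle $\cB'$, still $C^r$-close to $\cA$, whose return rotation number over $q$ differs from that of $\cB$. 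If $\PP\cA$ were structurally $C^r$-stable, both $\PP\cB$ and $\PP\cB'$ would be cohomologous to $\PP\cA$ through fibered homeomorphisms close to $\Id_{\PP V}$, forcing the return rotation numbers of $\PP\cB$ and $\PP\cB'$ over $q$ to coincide --- a contradiction. Hence $(b)$ fails.

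It remains to remove the two extra hypotheses. When $E$ is $2$-dimensional but $\cA$ reverses orientation on it (respectively $E$ is non-orientable), one passes to $\cA^2$ (respectively to the orientation double cover of the relevant invariant set); there the block becomes orientation-preserved and one runs the previous argument on an appropriate even power of a return map --- the points to check being that structural $C^r$-stability is inherited by these operations and that \cref{t.growuptheorem}, or a direct one-orbit perturbation when some return map is already elliptic, still supplies the needed change of rotation number. When $\dim E\geq 3$ there is no rotation number available; one would first have to perturb $\cA$, inside its structural-stability neighborhood, so as to create a $2$-dimensional block of a finer dominated splitting inside $E$ --- and it is exactly here that we have no tool, since such a controlled perturbation need not exist. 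Finally, when periodic points of $T$ are not dense (the setting in which $(b)$ remains meaningful, as a cocycle analogue of the stability conjecture), \cref{t.growuptheorem} does not apply, and one must replace return rotation numbers over periodic orbits by the fibered rotation numbers $\rho_{\cA,E}(\mu,\cdot)$ over invariant measures: if $\PP\cA$ were mode-locked on $\PP E$ over every such measure, the mode-locking/trapping-strip dictionary (\cref{c.mode.locking}, with the relaxation of hypotheses in \cref{r.trappingstri}) would yield a trapping strip in $\PP E$, hence an invariant cone field in $E$ and a finer dominated splitting, contradicting maximality of $E_1\oplus\cdots\oplus E_k$; so some $\mu$ is not mode-locked, and one wants a $C^r$-small perturbation realizing a nonzero relative rotation number, which --- again by the cohomology-invariance of the rotation number of a path and the local simple connectedness of the space of fibered homeomorphisms --- cannot be cohomologous to $\PP\cA$ through a small homeomorphism.

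The hard part is this last realization step in regularity $r\geq 1$ (a fortiori $r=\infty$ or $C^\omega$): producing genuinely $C^r$-small perturbations that change a fibered rotation number while preserving the dominated splitting, in the absence of any Sternberg-type linearization for cocycles and given that the bundle $E$ is only continuous --- precisely the difficulty that keeps \cref{c.complexdominationdichotconje} open for $r>0$. In the presence of abundant periodic orbits this difficulty is absorbed into \cref{t.growuptheorem} and, as indicated above, the argument should go through for $2$-dimensional blocks; the genuinely new input still required for the full conjecture is a treatment of $\dim\geq 3$ blocks and a fibered realization of rotation-number changes valid without periodic orbits. For $r=0$ none of this obstructs --- arbitrary $C^0$-perturbations can be made directly inside the continuous bundle $E$, and the remaining cases are handled by the constructions of \cref{s.abcequivalence} --- consistently with the conjecture being a theorem there.
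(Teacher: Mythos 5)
This statement is a \emph{conjecture}: the paper does not prove it, and neither do you --- your proposal correctly assembles the implications that are actually established ($(a)\Rightarrow(b)$ from \cref{p.abc}, the $r=0$ converse from \cref{s.abcequivalence}) and then, for $(b)\Rightarrow(a)$ in positive regularity, gives a strategy that works only in special cases and explicitly concedes the remaining gaps. That self-assessment is essentially accurate, so there is no hidden error to expose, but be clear that what you have written is a reduction of the conjecture to open problems, not a proof. Your ``tractable case'' (dense periodic points, a $2$-dimensional orientable block with preserved orientation) is sound: it amounts to $\neg(a)\Rightarrow\neg(c)$ via \cref{t.growuptheorem} followed by the $(b)\Rightarrow(c)$ argument of \cref{p.abc}, and it is a legitimate variant of the paper's partial result. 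Note, though, that it proves something slightly different from \cref{t.projstructstab}: you need density of periodic points but only one good block, whereas the paper needs a measure of total support but requires all blocks of the finest splitting to have dimension $\leq 2$.

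One substantive correction to your accounting of what is missing. You list ``a fibered realization of rotation-number changes valid without periodic orbits'' as a genuinely new input still required; but for a $2$-dimensional orientation-preserved block this is exactly what the proof of \cref{t.projstructstab} already supplies: the rotation cocycle $\cR_t$ along a smooth (even analytic) approximating bundle $E^\epsilon$ is a $C^r$-small perturbation for every $r$, \cref{c.incresingpath} makes the induced path strictly increasing, and \cref{r.strictincreasing.varying.case} together with \cref{p.dominationmodelocking} then forces a nonzero relative rotation number over any measure of total support --- no periodic orbits needed. The genuinely open points are therefore: (i) blocks of dimension $\geq 3$, where, as you correctly say, no rotation number is available and no $C^r$-perturbation tool is known to create a finer splitting; (ii) $2$-dimensional blocks that are non-orientable or on which orientation is reversed, where your proposal to pass to $\cA^2$ or to a double cover is unverified (you must check that structural $C^r$-stability is inherited by these operations and that the rotation-number obstruction descends); and (iii) base dynamics carrying no invariant measure of total support --- note that \cref{r.trappingstri} only relaxes minimality to the existence of such a measure, so your appeal to the mode-locking/trapping-strip dictionary in the aperiodic case does not escape hypothesis (iii).
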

We make a little step towards \cref{c.ba} using properties of rotation numbers:
%
%
%
%

\begin{theorem}\label{t.projstructstab}
Let $\cA$ be a linear cocycle such that: 
\begin{itemize}
\item it fibers over a homeomorphism $T$ admitting a finite invariant measure of total support,
\item the bundles of its finest dominated splitting have all dimension $\leq 2$,
\item $\cA$ preserves some continuous orientation on each of the $2$-dimensional bundles of that splitting.
\end{itemize}
Then we have the equivalence $(a)\Leftrightarrow (b)$, that is, $\PP\cA$ is structurally $C^r$-stable if and only if it has dominated splittings of all indices.
\end{theorem}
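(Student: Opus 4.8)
\textbf{Proof plan for \cref{t.projstructstab}.}
By \cref{p.abc} we already have $(a)\Rightarrow(b)$ (indeed $(a)\Rightarrow(b)\Rightarrow(c)$ in all regularities), so the work is entirely in proving $(b)\Rightarrow(a)$ under the three standing hypotheses. The plan is to prove the contrapositive: assuming $\cA$ does \emph{not} have dominated splittings of all indices, produce, in every $C^r$-neighborhood of $\cA$, a cocycle $\cB$ whose projectivization $\PP\cB$ is \emph{not} topologically conjugate to $\PP\cA$ through a homeomorphism close to the identity, thereby contradicting structural $C^r$-stability of $\PP\cA$. The idea is that failure of domination of some index must, under our hypotheses on the finest dominated splitting, be localized inside one of the orientable $2$-dimensional bundles $E_j$ on which $\cA$ preserves orientation, and there the relative rotation number $\rho_{\cA,E_j}(\mu,\cdot)$ of \cref{s.generalrelrot} becomes a fine enough tool to detect a topological change.

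\textbf{Step 1: reduce to a single $2$-plane bundle.} Let $V=F_1\oplus\cdots\oplus F_m$ be the finest dominated splitting of $\cA$. By hypothesis each $F_i$ has dimension $1$ or $2$, and $\cA$ preserves a continuous orientation on each $2$-dimensional one. Dominated splittings of all indices exist if and only if \emph{every} $F_i$ has dimension $1$; so if $(a)$ fails there is some $i=j$ with $\dim F_j=2$. Restrict attention to the projective subbundle $\PP F_j\subset\PP V$: since the $F_i$ are the bundles of the \emph{finest} dominated splitting, $\PP F_j$ is a chain-recurrence class of $\PP\cA$ that is isolated in $\PP V$ in the sense that it is the maximal invariant set in a neighborhood, and this neighborhood structure is robust under small $C^r$ perturbations (domination, hence the splitting off of $F_j$ from its complement, is an open condition). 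Consequently a small $C^r$-perturbation supported inside $F_j$ changes $\PP\cA$ only in a way that cannot be ``absorbed'' by the rest of $\PP V$: a conjugacy $h$ of $\PP V$ close to $\Id$ must send $\PP F_j$ to $\PP F_j$, hence restrict to a conjugacy between $\PP\cA|_{\PP F_j}$ and $\PP\cB|_{\PP F_j}$.

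\textbf{Step 2: use the relative rotation number as a conjugacy invariant on $\PP F_j$.} Since $\cA|_{F_j}$ is an orientation-preserving cocycle on an oriented $2$-plane bundle, $\PP\cA|_{\PP F_j}$ is a skew-product of positive circle homeomorphisms over $T$ in the sense of \cref{s.rotnumber.isotopies}, and for a small path $\phi$ from $\cA|_{F_j}$ to $\cB|_{F_j}$ inside a simply connected neighborhood we have the relative rotation numbers $\rho_{\cA,E_j}(\mu,\cB)\in\RR$ for every $T$-invariant $\mu$. The key point, to be proved here, is that this quantity is a \emph{cohomology invariant}: if $h$ conjugates $\PP\cA|_{\PP F_j}$ to $\PP\cB|_{\PP F_j}$ as fibered homeomorphisms over $\Id_X$, then $\cA|_{F_j}$ and $\cB|_{F_j}$ are cohomologous as skew-products, and by the cohomology-invariance of $\rho$ stated in \cref{s.rotnumber.isotopies} (valid for paths of cohomology classes) the relative rotation number $\rho_{\cA,E_j}(\mu,\cB)$ must vanish for all $\mu$ --- here is where I use that $T$ admits a measure $\mu_0$ of total support, which (via \cref{r.trappingstri}) lets the vanishing of $\rho(\mu_0,\cdot)$ be upgraded to the structural/trapping-strip conclusion rather than a merely measure-theoretic one. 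So: \emph{structural $C^r$-stability of $\PP\cA$ forces $\cA|_{F_j}$ to be mode-locked over $\mu_0$}, hence (by the relaxed form of \cref{c.mode.locking} noted in \cref{r.trappingstri} for projectivized cocycles over base dynamics with a fully supported measure) $\cA|_{F_j}$ admits a trapping strip.

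\textbf{Step 3: a trapping strip in $F_j$ gives a dominated splitting, contradiction.} A trapping strip $S\subset\PP F_j$ is a fiberwise disjoint union of $n$ intervals, continuous in $x$, each strictly mapped into itself by $\PP\cA^n$; taking $n$-th powers and intersecting the nested iterates yields an $\cA^n$-invariant --- hence, after spreading over the $T$-orbit, $\cA$-invariant --- continuous splitting of $F_j$ into two line subbundles $L^+\oplus L^-$, and the strict inclusion $\PP\cA^n(S)\Subset S$ together with compactness gives the uniform contraction/expansion ratio on $L^\pm$ that is exactly the definition of a dominated splitting of $F_j$. This refines the finest dominated splitting of $V$ by splitting $F_j$, contradicting the minimality of that splitting. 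Therefore $(a)$ holds. I expect \textbf{Step 2} to be the main obstacle: one must check carefully that an $\epsilon$-small conjugacy of $\PP V$ genuinely restricts to a conjugacy on $\PP F_j$ (this is where the isolation/robustness from Step 1 is used quantitatively, choosing $\epsilon$ smaller than the distance from $\PP F_j$ to the rest of the chain-recurrent set), and that cohomology of the \emph{projectivized} cocycles lifts to cohomology of the skew-products in $Sk^+_{\EE,\Id_X}$ so that the cohomology-invariance of $\rho$ applies verbatim; the passage from ``$\rho(\mu_0,\cdot)\equiv 0$'' to ``trapping strip'' via the fully-supported-measure version of \cref{c.mode.locking} is the other delicate ingredient, since minimality of $T$ is not available.
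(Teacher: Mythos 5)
Your architecture is the right one and in fact matches the paper's: prove the contrapositive, isolate a $2$-dimensional bundle $E=F_j$ of the finest dominated splitting, use normal hyperbolicity of $\PP E$ to force a near-identity conjugacy to restrict to $\PP E$, invoke cohomology invariance of the relative rotation number (\cref{r.locconj}), and conclude domination of $E$ from mode-locking via \cref{p.dominationmodelocking}, contradicting fineness. But there is a genuine gap at the pivot of your Step 2, namely the sentence ``structural $C^r$-stability of $\PP\cA$ forces $\cA|_{F_j}$ to be mode-locked over $\mu_0$.'' What structural stability plus \cref{r.locconj} actually gives is that $\rho_{\cA,E}(\mu_0,\cB)=0$ for every $\cB$ in a $C^r$-neighborhood of $\cA$ \emph{inside the space of linear cocycles}. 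Mode-locking, however, is quantified over all $C^0$-perturbations of the skew-product $\PP\cA|_{\PP E}$ in $Sk^+_{\PP E,T}$, a vastly larger class (these need not be projective-linear, let alone arise from $C^r$-cocycles). So the implication you assert is precisely the point that needs proof, and it is where the only real work of the theorem lies.

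The paper's bridge is the following: by \cref{r.strictincreasing.varying.case} (the varying-bundle form of \cref{l.modelocking.increasing}), failure of upper semi-locking forces \emph{every} strictly increasing path starting at $\PP\cA|_{\PP E}$ to have strictly positive rotation number; hence it suffices to exhibit a single strictly increasing path made of $C^r$-perturbations of $\cA$ and show, via \cref{r.locconj} and structural stability, that its rotation number is $0$. Constructing such a path is nontrivial precisely because $E$ is in general only H\"older continuous, so one cannot smoothly rotate inside $E$ itself; the paper rotates instead along a smooth bundle $E^\epsilon$ that $C^0$-approximates $E$, sets $\cA_t=\cR_t\circ\cA$, and proves in \cref{c.incresingpath} that the induced path of skew-products on the continuations $\PP E_t$ is strictly increasing for $\epsilon,\eta$ small. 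Your proposal contains no analogue of this construction and does not flag the regularity obstruction, so as written the deduction of mode-locking from stability is unjustified. A secondary, minor point: your Step 3 detours through trapping strips and the minimal-base characterization; the cleaner and fully sufficient route is \cref{p.dominationmodelocking} itself, which for linear cocycles gives ``mode-locked over $\mu_0$ $\Leftrightarrow$ dominated over $\supp\mu_0=X$'' with no minimality hypothesis.
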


\begin{remark}
It seems to us that a lot of the difficulties of the conjectures are already contained in the case of $\SL^{\pm}(2,\RR)$-cocycles, where $\SL^{\pm}(2,\RR)$ is the set of real matrices of determinant $\pm 1$. This should be the next problem to focus on, before tackling higher dimensions.
\end{remark}

\begin{question} The implication $(a)\Rightarrow (b)$ of \cref{p.abc} might be a particular case of a generalization the theorems of Robbin and Robinson~\cite{Rob,Robb} to fibered diffeomorphisms. 

Here is a tentative formulation: let a fibered homeomorphism $f$ on a smooth fiber bundle $V$ such that $f$ is a diffeomorphism by restriction to each fiber, and the linear cocycle \[Df\colon TV:=\bigsqcup_{x\in X} TV_x\to TV\] is continuous. Call it {\em hyperbolic} if, by restriction to each chain-recurrent set of $f$, the cocycle $Df$ is hyperbolic. Is any such $f$ structurally stable? More precisely, is any $C^1$-perturbation $g$ of $f$ along the fibers cohomologous to $f$, that is, conjugate to $f$ by a homeomorphism of $V$ that fixes each fiber?
\end{question}

\subsection{Ideas of the proofs}

\begin{proof}[Idea of the proof of \cref{t.cocyclewithsimpleLyap}]If we have a pair of non-real conjugate eigenvalues at some periodic point $x$, then a small perturbation might change their arguments slightly but not turn them real. The point is that, above a periodic measure (an invariant measure supported by a periodic point), our rotation number corresponds to the change of argument of the eigenvalues, multiplied by the period. Hence, using the continuous dependence of the rotation number on the measure, by taking a periodic measure with large period close enough to our initial periodic measure and using the continuity of the rotation number, we get a large change of argument along the new large period periodic point, which means that we go at some point through a pair of  real eigenvalues. A further smooth perturbation separates the moduli of those eigenvalues.
\end{proof}

Given a linear cocycle $\cA$ on a linear bundle $V$ denote by $\PP\cA$ its projectivization, that is, the induced map of the projective bundle $\PP V$. The following extends a result Avila, Bochi and Damanik~\cite{ABD} to the case of relative rotation numbers:

\begin{proposition}\label{p.dominationmodelocking}
Let a linear cocycle $\cA\in LC_{V,T}$ preserve some continuous orientation on a $2$-dimensional subbundle $E$. Let $\mu$ be a $T$-invariant measure. Then the following are equivalent:
\begin{itemize}
\item[$(i)$] the projectivization $\PP\cA_{|E}$ is mode-locked over $\mu$,
\item[$(ii)$] the restriction $\cA_{|E}$ admits a dominated splitting over the support $\supp(\mu)$.
\end{itemize}
\end{proposition}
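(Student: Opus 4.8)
The plan is to reduce the assertion to the equivalence \emph{trapping strip $\Leftrightarrow$ dominated splitting} for the projectivized cocycle, and to feed this into the characterization of mode-locking by trapping strips. First I would pass to the compact $T$-invariant base $Y:=\supp(\mu)$. Since $E$ is an oriented $2$-plane bundle, $\PP E$ is an oriented circle bundle and $\PP(\cA_{|E})$ is a positive skew-product over $T$; moreover $\mu$ is a $T_{|Y}$-invariant measure of \emph{total} support on $Y$. The relative rotation number above $\mu$ only reads the dynamics over $Y$, and any small perturbation of $\PP(\cA_{|E})_{|Y}$ in $Sk^{+}_{\PP E_{|Y},T_{|Y}}$ extends, by a Tietze/Dugundji-type argument in the circle bundle (small fibred perturbations of the identity form a star-shaped family), to a small perturbation of $\PP(\cA_{|E})$ over the whole base with the same relative rotation number above $\mu$, while conversely perturbations over $X$ restrict over $Y$. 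Hence $\PP(\cA_{|E})$ is mode-locked over $\mu$ iff $\PP(\cA_{|E})_{|Y}$ is, and since $(ii)$ is already a statement over $Y$, everything henceforth takes place over $Y$.

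For $(ii)\Rightarrow(i)$ I would start from a dominated splitting $E_{|Y}=L_{1}\oplus L_{2}$, with $L_{2}$ dominating $L_{1}$. Then $\PP L_{1},\PP L_{2}$ are disjoint continuous $\PP\cA$-invariant sections, and the fibrewise derivative of $\PP\cA^{N}$ along $\PP L_{1}$, for the integer $N$ of the definition, equals, after metric normalization, the domination ratio $\|\cA^{N}v_{1}\|\,\|v_{2}\|/(\|v_{1}\|\,\|\cA^{N}v_{2}\|)<\tfrac12$. Replacing the Euclidean metric on $E$ by an adapted one (a standard device for dominated splittings) makes $\PP L_{1}$ uniformly attracting for $\PP\cA$ itself, so that a thin metric ball-neighbourhood of $\PP L_{1}$ avoiding $\PP L_{2}$, which has one arc per fibre depending continuously on the base point, is a trapping strip for $\PP\cA$. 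Finally, a trapping strip forces mode-locking over $\mu$ --- the easy implication, valid for an arbitrary base $T$: if $\cA$ traps a strip $S$, every cocycle on a short path from $\cA$ to a nearby $\cB$ still traps $S$, so the $k$-th iterates of that path map $S$ into $S$ and its winding above $\mu$ is bounded uniformly in $k$, whence the relative rotation number of $\cB$ vanishes.

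For $(i)\Rightarrow(ii)$ I would argue by contraposition. If $\cA_{|E}$ has no dominated splitting over $Y$, then $\PP(\cA_{|E})_{|Y}$ has no trapping strip; conversely a trapping strip always produces a dominated splitting of $\cA_{|E}$ over $Y$. This equivalence is the fibred, $2$-subbundle version of the dictionary of \cite{ABD} between trapping regions and uniform hyperbolicity of $\SL(2,\RR)$-cocycles. Granting it, the absence of a trapping strip yields, since $T_{|Y}$ carries the total-support measure $\mu$ and we deal with a projectivization, the failure of mode-locking over $\mu$ by the non-trivial implication of \cref{c.mode.locking} in the relaxed form announced in \cref{r.trappingstri}; equivalently --- to avoid any circularity with that remark --- one constructs directly, through the perturbative mechanism of \cite{ABD} realized by small perturbations in $Sk^{+}$ (smoothing if $r>0$), a path from $\cA$ to an arbitrarily close cocycle along which the relative rotation number above $\mu$ is nonzero.

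I expect the main obstacle to be the dictionary between trapping strips and dominated splittings used above: that the mere existence of a trapping strip $S$ for the projectivization of a \emph{linear} cocycle forces a genuine dominated splitting. Concretely, one considers the invariant compact sets $\Lambda^{+}:=\bigcap_{k\ge0}\PP\cA^{k}(S)\subset\Int S$ and $\Lambda^{-}:=\bigcap_{k\ge0}\PP\cA^{-k}(\PP E_{|Y}\setminus\Int S)$, checks that they are disjoint, and --- here the projective-linear nature of the fibre maps is indispensable --- that they cannot have fibres of positive length, so that $\Lambda^{\pm}$ are the projectivizations of continuous line subbundles $L_{1},L_{2}$ with $E_{|Y}=L_{1}\oplus L_{2}$; finally one must see that the \emph{strictness} of the trapping, together with compactness of $Y$, upgrades into the uniform inequality defining domination for some power. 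Everything else is soft: the reduction to $Y$, the adapted-metric step, and the passage from the trivial bundle $X\times\RR^{2}$ treated in \cite{ABD} to a general oriented $2$-plane bundle (via local trivializations, or by pulling back to a finite cover of $X$ that orients $L_{1}$).
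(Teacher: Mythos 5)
Your reduction to $Y=\supp(\mu)$ and your argument for $(ii)\Rightarrow(i)$ are fine and consistent with the paper: that direction is the easy one (the paper dispatches it in a single sentence), and the adapted-metric/trapping-strip/monotonicity chain you describe is exactly the justification given in the remark following the statement.

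The gap is in $(i)\Rightarrow(ii)$, which carries all the content. Your first route (no domination $\Rightarrow$ no trapping strip $\Rightarrow$ not mode-locked) is circular for precisely the reason you suspect: the implication ``no trapping strip $\Rightarrow$ not mode-locked'' is the hard half of \cref{c.mode.locking}, proved in \cite{G2} only for minimal base dynamics, and the relaxed version announced in \cref{r.trappingstri} for linear cocycles is itself justified by the proposition you are proving. Your fallback --- ``construct directly, through the perturbative mechanism of \cite{ABD}, a path with nonzero rotation number'' --- is a restatement of the claim with a citation rather than a proof: no mechanism is indicated. You also locate the ``main obstacle'' in the wrong arrow: the dictionary trapping strip $\Rightarrow$ dominated splitting is already available as \cite[Theorem B]{BG1} and plays no role in the paper's argument. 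The actual work, entirely absent from your sketch, is the following quantitative chain. Normalize so that $\|A(x)\|=1$ (both mode-locking and domination are invariant under multiplication by a positive scalar function). Absence of domination over $\supp(\mu)$ then yields, by \cite[Theorem A]{Y}, points $x\in\supp(\mu)$ and arbitrarily large $n$ with $\frac{1}{n}\log\|\cA^n(x)\|$ arbitrarily small. A singular-value perturbation lemma (\cref{c.singvalues}, obtained from the left-invariant Riemannian metric on $\SL(2,\RR)$) converts this subexponential growth into fibered $\epsilon$-chains joining any projective direction over $x$ to any direction over $T^nx$ (\cref{l.nondom}). Consequently the strictly increasing path $(\PP\cA\circ\cR_t)_{|t|\leq\epsilon}$ has winding number $>1$ after $n$ iterates at some point of the support, which by the criterion of \cref{l.rotationcriteria} forces its rotation number above $\mu$ to be strictly positive, for every $\epsilon>0$; hence $\cA$ is not mode-locked. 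Without some version of these steps the hard direction remains unproved.
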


We give a proof in \cref{a.p.dominationmodelocking}.
Here mode-locking is seen among the set of skew products $Sk^+_{\PP E,T}$.
Note that the second item is equivalent to the existence of a trapping strip, which motivated \cref{r.trappingstri}. Indeed, taking an adapted metric~\cite{G1} for the dominated splitting, one easily builds a trapping strip. Conversely, if there is a trapping strip, then \cite[Theorem B]{BG1} implies that there is a dominated splitting. 

\begin{proof}[Ideas of the proofs of \cref{t.complexdominationdichotconje} and \cref{t.projstructstab}]
Mode-unlocking for $\PP(\cA_{|E})$ above a measure $\mu$ means by definition that there is a perturbation $g$ in $Sk^+_{\PP E,T}$ with a non-trivial relative rotation number above $\mu$. We actually prove that mode-unlocking implies that there are smooth perturbations of $\cA$ with non-zero rotation numbers along the continuation of $E$. This implies in particular that there is no projective structural $C^r$-stability.
Moreover, if $\mu$ is accumulated by periodic measures (which is the case in transitive subshifts of finite type), then by continuity of the rotation number those periodic measures will see that non-trivial rotation number. In other words the argument of the eigenvalues along the continuation of $E$ have to change: smooth perturbations will create non-real eigenvalues. 
\end{proof}

%
%
%

\subsection{Structure of the rest of the paper}
In \cref{s.rotmorphism.modelock}, we define precisely rotation numbers for paths of skew-products, we give some of its properties and we explain a corresponding notion of mode-locking.
In \cref{s.varyingbundles}, we extend those results to 2-dimensional bundles of dominated splittings.
In \cref{s.mainth} we use rotation numbers to prove \cref{t.cocyclewithsimpleLyap,t.growuptheorem,t.projstructstab}. 

For completeness, in \cref{s.restofproofs} we use classical arguments to prove the $C^0$ versions of the conjectures above as well as the proof of \cref{p.abc}. Finally in \cref{a.avilakrikorian} we recall how Avila and Krikorian built a 'variation of the rotation number' for $\SL(2,\RR)$-cocycles, and we notice that it coincides with ours up to a factor $-2$.

\section{Rotation numbers and mode-locking for a fixed circle bundle}\label{s.rotmorphism.modelock}
In this section we fix a continuous (not necessarily invertible) transformation $T\colon X\to X$ of a compact metric space $X$.
Fix an oriented circle bundle $\EE$ with compact base $X$. We work in the set $Sk^+_{\EE,T}$ of skew-products on $\EE$ fibering on $T$, endowed with the topology of uniform convergence.

Endow $\EE$ with a metric that gives length $1$ each circle. This gives a metric on the universal cover of each fiber. 
Given real numbers $a\leq b$ and a path $\gamma=(\gamma_t)_{t\in [a,b]}$ in some fiber $\EE_x$, let  $\widetilde{\gamma}$ be a lift to the universal cover $\tilde{\EE}_{x}$. Define the {\em winding number}  $w{(\gamma)}$ as the algebraic length
of the oriented segment $[\widetilde{\gamma}_a,\widetilde{\gamma}_b]$. It does not depend on the choice of the lift.

Given a path $\phi=\bigl(f_t\bigr)_{t\in [a,b]}$ of skew-products in $Sk^+_{\EE,T}$ and a point $u\in \EE_x$ in the fiber above $x$, denote by $\phi u$ the path $\bigl(f_tu\bigr)_{t\in [0,1]}$. Denote by $\phi^{(n)}$ the path of $n$-th iterates $f_t^n=f_t\circ \ldots \circ f_t$ in $Sk^+_{\EE,T^n}$. Since for all $u,u'\in \EE_x$,
\begin{align}
| w(\phi^{(n)}  u)- w(\phi^{(n)} u')|<1.\label{e.winding}
\end{align}
we can unambiguously introduce the following:

\begin{definition}\label{d.rotnumberpoint}
The {\em rotation number} of the path $\phi$ at $x\in X$ is, whenever it exists, the asymptotic growth rate of winding numbers through iteration: 
\begin{align*}\rho(x,\phi)=\lim_{n\to +\infty}\frac{1}{n} w(\phi^{(n)} u),
\end{align*}
where $u$ is any vector in $\EE_x$ (it does not depend then on the choice of that $u$).
\end{definition}

The existence and the value of this limit does not depend on the choice of the metric on $\EE$.
Let $\cM(T)$ be the vector space of $T$-invariant finite Borel signed measures on $X$, endowed with the weak-* topology and let $P_1(Sk^+_{\EE,T})$ be the set  of paths of $Sk^+_{\EE,T}$ endowed with the topology of uniform convergence. Endow $\cM(T)\times P_1(Sk^+_{\EE,T})$ with the product topology.

\begin{proposition}[Existence and properties of rotation numbers]\label{p.rotationmorphism}
Consider a path $\phi$ in $Sk_{\EE,T}^+$ and a measure $\mu\in \cM(T)$. The rotation number $\rho(\cdot, \phi)$ is defined $\mu$-a.e., bounded, $T$-invariant, measurable and only depends on the homotopy class of $\phi$ with fixed extremities. Moreover,
$$\rho(\mu,\phi):= \int_X \rho(\cdot,\phi)d\mu$$
 defines a continuous map $\rho\colon \cM(T)\times P_1(Sk^+_{\EE,T})\to \RR$.
\end{proposition}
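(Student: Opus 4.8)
The plan is to establish the four pointwise properties of $\rho(\cdot,\phi)$ (a.e.\ existence, boundedness, $T$-invariance, measurability) first, then obtain the homotopy invariance, and finally derive continuity of the integrated quantity $\rho(\mu,\phi)$. First I would fix a path $\phi=(f_t)_{t\in[a,b]}$ and work with the function $c_n(x)=w(\phi^{(n)}u)$ for some chosen $u\in\EE_x$; by inequality \eqref{e.winding} this is well defined up to an additive error $<1$, and one checks it is measurable in $x$ (it is actually continuous where the lift can be chosen continuously, and in general a countable combination of such). The key observation is the \emph{subadditive cocycle} structure: decomposing $\phi^{(m+n)}$ as $\phi^{(n)}$ followed by the image under $f_t^n$ of $\phi^{(m)}$ based at the moving point $f_t^n u$, and using \eqref{e.winding} again to replace that moving basepoint by the canonical one over $T^n x$, we get
\begin{align*}
|c_{m+n}(x) - c_n(x) - c_m(T^n x)| < C
\end{align*}
for a universal constant $C$ (one can take $C=2$). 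Thus $(c_n)$ is an almost-additive cocycle over $T$; equivalently both $c_n+C$ and $-(c_n-C)$, suitably renormalized, are subadditive. Kingman's subadditive ergodic theorem then gives that $\frac1n c_n(x)$ converges $\mu$-a.e.\ to a $T$-invariant, measurable, bounded limit $\rho(\cdot,\phi)$ (boundedness because $|c_1|$ is bounded on the compact base, by uniform continuity of $f_a,\ldots,f_b$ and compactness of $[a,b]$, so $|c_n|\le (\sup|c_1|+C)\,n$).

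Next, homotopy invariance with fixed extremities: if $\phi,\psi$ are homotopic rel endpoints, then for each $x$ the loops $\phi^{(n)}u\cdot(\psi^{(n)}u)^{-1}$ in the fiber $\EE_{T^nx}$ are null-homotopic, hence have winding number zero once we account for the fixed common endpoints; more carefully, winding number of a path in a fiber depends only on its endpoints \emph{in the universal cover}, and a homotopy rel endpoints of $\phi$ produces a homotopy rel endpoints of $\phi^{(n)}$, so the lifted endpoints agree and $w(\phi^{(n)}u)=w(\psi^{(n)}u)$ exactly. Hence $\rho(\cdot,\phi)$ depends only on the homotopy class. For the integrated quantity, $\rho(\mu,\phi)=\int_X\rho(\cdot,\phi)\,d\mu$ is well defined and linear in $\mu$ by construction.

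The main obstacle I expect is the \emph{joint continuity} of $(\mu,\phi)\mapsto\rho(\mu,\phi)$ on $\cM(T)\times P_1(Sk^+_{\EE,T})$: the limit defining $\rho$ is not uniform, so continuity does not follow formally from continuity of each $c_n$. The strategy is a standard two-step estimate. For a fixed $\phi$, the almost-additivity $|c_{m+n}(x)-c_n(x)-c_m(T^nx)|<C$ gives, by the usual Fekete-type argument applied fiberwise and then integrated, that
\begin{align*}
\Bigl| \rho(\mu,\phi) - \tfrac1n\!\int_X c_n(x)\,d\mu \Bigr| \le \frac{C}{n}
\end{align*}
\emph{uniformly in $\mu$}. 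Thus $\rho(\cdot,\phi)$ is a uniform limit (in $\mu$) of the functions $\mu\mapsto\frac1n\int c_n\,d\mu$, each of which is weak-$*$ continuous in $\mu$ (here one must choose a genuinely continuous or at least bounded Borel representative of $x\mapsto c_n(x)$, which is possible since $c_n$ is locally continuous and the ambiguity is a bounded integer-valued function); hence $\mu\mapsto\rho(\mu,\phi)$ is weak-$*$ continuous for each fixed $\phi$, with a modulus of convergence ($C/n$) independent of $\phi$ as well. For the dependence on $\phi$: if $\psi$ is uniformly close to $\phi$, then $\psi^{(n)}$ is uniformly close to $\phi^{(n)}$ on the (compact) relevant region for each fixed $n$, so $\int c_n^\psi\,d\mu$ is close to $\int c_n^\phi\,d\mu$; combined with the uniform $C/n$ bound this yields joint continuity by the standard $\varepsilon/3$ argument (pick $n$ large so the tails are $<\varepsilon/3$ regardless of $\phi,\mu$, then use finite-$n$ continuity). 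The one technical point to be careful about is that closeness of $\psi$ to $\phi$ in the path topology controls $\psi^{(n)}u$ near $\phi^{(n)}u$ only if the iterates stay in a fixed compact set, which holds automatically since $\EE$ is compact; and that the winding-number comparison $|w(\psi^{(n)}u)-w(\phi^{(n)}u)|$ is small whenever the two paths are uniformly close, which follows from continuity of the endpoint-in-universal-cover map.
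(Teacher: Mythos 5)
Your proof is correct and follows essentially the same route as the paper: Kingman's subadditive ergodic theorem applied to the winding-number cocycle, followed by a two-sided approximation by continuous finite-$n$ averages to obtain joint continuity, and exact invariance of $w(\phi^{(n)}u)$ under homotopy rel endpoints. The paper merely packages your almost-additive $c_n$ as the genuinely continuous fiberwise extremes $\tau_n(x)=\max_{u\in\EE_x}w(\phi^{(n)}u)$ and $\sigma_n(x)=\min_{u\in\EE_x}w(\phi^{(n)}u)$ (which sidesteps your basepoint-selection/measurability worry, the ambiguity being a real number of modulus $<1$ rather than an integer), and derives continuity as upper semicontinuity from $\rho=\inf_n\frac1n\int\tau_n\,d\mu$ together with lower semicontinuity from $\rho=\sup_n\frac1n\int\sigma_n\,d\mu$ --- the same content as your uniform $C/n$ estimate.
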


We call the map  $\rho\colon \cM(T)\times P_1(Sk^+_{\EE,T})\to \RR$ the {\em rotation map} of the pair $(T,\EE)$ and the real number $\rho(\mu,\phi)$ the {\em rotation number} of  the path $\phi$ above the measure $\mu$.

\begin{remark}
The rotation map $\rho$ satisfies then the following additional properties: 
\begin{itemize}
\item it is by construction linear in variable $\mu$,
\item it is additive with respect to concatenation:  $\rho(\mu,\phi*\psi)=\rho(\mu,\phi)+\rho(\mu,\psi)$, for any paths $\phi,\psi$ such that $\phi*\psi$ is defined, as a consequence of the same property for winding numbers. 
\end{itemize}
\end{remark}

\begin{remark}
Suppose that $\EE=X\times \SS^1$, the base $X$ is connected, and $f$ is isotopic to identity through a path $\phi$ in $Sk^+_{\EE,T}$ joining $(T,\Id)$ to $f$. Then $\rho(\mu,\phi)\mod \ZZ$ coincides with the fibered rotation number $\rho_\mu(f)$ defined by Herman~\cite{H,JS}.
\end{remark}


%

\begin{proof}[Proof of \cref{p.rotationmorphism}]

Choose a metric on $\EE$ that gives length $1$ each circle and define the 
sequence of continuous functions
\begin{align*}
\tau_n(x,\phi)&=\max_{u\in \EE_x}w(\phi^{(n)} u).
\end{align*}
with respect to some metric on $\EE$. That sequence is subadditive, that is, 
$$\tau_{n+m}(x,\phi)\leq \tau_n(x,\phi)+\tau_m(T^nx,\phi).$$
The Kingman's subadditive ergodic theorem~\cite{Ki} gives that:
\begin{itemize}
\item for any $T$-invariant measure $\mu$, the sequence $\frac{1}{n}\tau_n(\cdot,\phi)$ converges $\mu$-a.e. to a measurable $T$-invariant map, which is $\rho(\cdot,\phi)$ as in \cref{d.rotnumberpoint}. That map is bounded since $\tau_1$ is.
\item $\rho(\mu,\phi)=\inf_{n>0} \frac{1}{n}\int\tau_n(\cdot,\phi)d\mu.$
\end{itemize}
In particular, the continuity of $\tau_n$ on $X\times P_1(Sk^+_{\EE,T})$ and the compactness of $X$ imply that 
$$(\mu,\phi)\mapsto \frac{1}{n}\int\tau_n(\cdot,\phi)d\mu$$
is continuous. Hence the upper-semicontinuity of
 $$\rho\colon \cM(T)\times P_1(Sk^+_{\EE,T})\to \RR.$$ Lower-semicontinuity is obtained symmetrically by considering the superadditive  sequence of functions  
\begin{align*}
\sigma_n(x,\phi)=\min_{u\in \EE_x}w(\phi^{(n)} u).
\end{align*}
Invariance by isotopy with fixed extremities comes from the fact that the winding numbers $w(\phi^{(n)} u)$ are themselves invariant by isotopies with fixed extremities.
This ends the proof of \cref{p.rotationmorphism}.
\end{proof}

Two skew-products $f,g\in Sk^+_{\EE,T}$ are (continuously) {\em cohomologous} if they are conjugate by a homeomorphism in $Sk^+_{\EE,\Id_X}$. Denote by  $[f]$  the cohomology class of $f$.

\begin{proposition}[the rotation number is a cohomology invariant]\label{p.conjugacyinvariant}
If two paths  $\phi=\bigl(f_t\bigr)$ and $\psi=\bigl(g_t\bigr)$ coincide in cohomology, that is, $[f_t]=[g_t]$ for all $t$, then their rotation numbers coincide:
for all $x\in X$, if one of the rotation numbers $\rho(x,\phi)$ and $\rho(x,\psi)$ exists, then both do and they coincide.
\end{proposition}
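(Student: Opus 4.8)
The plan is to reduce everything to the case of a single-parameter family and to the pointwise definition of rotation numbers, and then to observe that the winding numbers $w(\phi^{(n)}u)$ are preserved, up to a bounded error, under a cohomology that fibers over $\Id_X$. Concretely: suppose $[f_t]=[g_t]$ for all $t$. This does \emph{not} give a single conjugacy valid for the whole path, but it does for each fixed $t$; the subtlety (and the main obstacle, see below) is to get enough uniformity so the difference of iterated winding numbers stays bounded in $n$. First I would fix $x\in X$ and the point $u\in\EE_x$ used in \cref{d.rotnumberpoint}, noting by \eqref{e.winding} that the choice of $u$ is irrelevant. Then I would reinterpret $w(\phi^{(n)}u)$ as the displacement, in the universal cover of the fiber $\EE_{T^nx}$, between $\widetilde{f_a^n}(\tilde u)$ and $\widetilde{f_b^n}(\tilde u)$, and compare the corresponding displacements for $\psi$.

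The key algebraic step is the following. A cohomology $h\in Sk^+_{\EE,\Id_X}$ with $g_t=h\circ f_t\circ h^{-1}$ for all $t$ gives, on iterates, $g_t^n=h\circ f_t^n\circ h^{-1}$. Lifting $h$ to a map $\widetilde h$ of the universal covers of the fibers (a fiberwise homeomorphism commuting with the deck $\ZZ$-action, hence at bounded distance $C=\sup_x\|\widetilde h_x-\Id\|<\infty$ from the identity in the sup metric, by compactness of $X$ and continuity of $h$), we get
\begin{align*}
\bigl|\,\widetilde{g_t^n}(\tilde u)-\widetilde{f_t^n}(\widetilde h^{-1}\tilde u)\,\bigr|\leq C
\end{align*}
for all $t$, $n$, $u$. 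Hence
\begin{align*}
\bigl|\,w(\psi^{(n)}u)-w(\phi^{(n)}u')\,\bigr|\leq 2C,\qquad u'=\widetilde h^{-1}\tilde u\ \text{projected back},
\end{align*}
and by \eqref{e.winding} the right-hand winding number for $\phi$ at the point $u'$ differs from the one at $u$ by less than $1$. Therefore $|w(\psi^{(n)}u)-w(\phi^{(n)}u)|\leq 2C+1$ uniformly in $n$, so dividing by $n$ and letting $n\to\infty$ shows that one limit exists iff the other does, and that they agree. This is exactly the claimed equality $\rho(x,\psi)=\rho(x,\phi)$ for every $x$.

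The step I expect to be the main obstacle is justifying the uniform bound $C<\infty$ and, more precisely, the existence of a continuous family of lifts $\widetilde h_x$ of $h$ to the universal covers $\tilde\EE_x$ that is globally at bounded distance from the identity: one must use that $h$ fibers over $\Id_X$ (so it genuinely acts circle-fiber by circle-fiber, with no base translation), that it is orientation-preserving on each fiber, and that $X$ is compact, to pin down a canonical lift close to $\Id$ and bound $\sup_x\|\widetilde h_x-\Id\|$. Once that is in place, everything is a routine triangle-inequality estimate on winding numbers together with the subadditive-limit machinery already established in \cref{p.rotationmorphism}. A minor point worth spelling out is that the hypothesis $[f_t]=[g_t]$ for all $t$ is weaker than ``$\phi$ and $\psi$ are conjugate by a single $t$-independent $h$'': to handle this I would first prove the statement under the stronger hypothesis as above, and then note that the pointwise rotation number $\rho(x,\cdot)$ only depends on the germ of the cohomology class path, and that any two cohomologies between $f_t$ and $g_t$ differ by an element of $Sk^+_{\EE,\Id_X}$ commuting with $f_t$, which again contributes a bounded error; alternatively, one checks directly from the formula $\rho(\mu,\phi)=\inf_n\frac1n\int\tau_n\,d\mu$ that $\tau_n$ changes by at most an additive constant under any fiberwise cohomology, which suffices.
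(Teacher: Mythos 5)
Your argument is correct, and essentially the same as the paper's, for the special case where a \emph{single} $h\in Sk^+_{\EE,\Id_X}$ (or a continuous path of such $h$) conjugates $f_t$ to $g_t$ for all $t$: the normalized lift of an orientation-preserving fiberwise conjugacy is at universally bounded distance from the identity, and the telescoping estimate on winding numbers then gives $|w(\psi^{(n)}u)-w(\phi^{(n)}u)|$ bounded uniformly in $n$. This is precisely the Claim isolated in the paper, and the paper even remarks that this special case would suffice for its applications.

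However, the full statement assumes only $[f_t]=[g_t]$ for each $t$ separately, and the step where you dispose of this weaker hypothesis is a genuine gap. The obstruction is not that two conjugacies between $f_t$ and $g_t$ differ by a centralizer element (that only concerns non-uniqueness of $h_t$ for fixed $t$); it is that the selection $t\mapsto h_t$ may admit \emph{no continuous choice at all}, as the paper explicitly notes. The quantity $w(\psi^{(n)}u)$ is the displacement of the continuous lift of the whole path $t\mapsto g_t^n u$, so it is a global invariant of the path in $t$, not determined by the collection of fiberwise conjugacy classes at each $t$ taken separately: writing $g_t^n u=h_t\bigl(f_t^n(h_t^{-1}u)\bigr)$ with $h_t$ jumping discontinuously gives no control on the winding accumulated between parameters, so the assertion that ``$\tau_n$ changes by at most an additive constant under any fiberwise cohomology'' is exactly what needs proof and does not follow from a triangle inequality. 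The paper's proof of the general case is correspondingly much heavier: it covers the parameter interval by countably many subintervals on which the conjugacy can be chosen inside a small arc-connected set of homeomorphisms, shows on each such subinterval that the $f$-path and the conjugated $g$-path bound a loop of diameter less than half a fiber and hence of zero winding, and then assembles these subintervals by a transfinite induction using additivity under concatenation and continuity of $\rho$ from \cref{p.rotationmorphism}. Your proposal is missing this entire patching mechanism.
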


\begin{proof}[Proof of \cref{p.conjugacyinvariant}] 
Given two continuous transformations $T,T'\colon X\to X$ and two paths $\phi=\bigl(f_t\bigr)$ and $\psi=\bigl(g_t\bigr)$ in $Sk^+_{\EE,T}$ and $Sk^+_{\EE,T'}$, respectively, denote by $\psi\circ \phi$ the path of composed maps $\bigl(g_t\circ f_t\bigr)$ in $Sk^+_{\EE,T'\circ T}$. We put a metric on $\EE$ that gives length $1$ and diameter $1/2$ to each circle.
The path $\psi\circ \phi$ is isotopic with fixed extremities to the concatenation $(g_1\circ  \phi) * (\psi \circ f_0)$, so that  for any $u\in \EE$ we have 
\begin{align*}
w(\psi\circ \phi \;u)&=w\Bigl[ (g_1\circ  \phi) * (\psi \circ f_0)\; u\Bigr]\\
&=w(g_1\circ  \phi \;u)+w(\psi \circ f_0\;u).
\end{align*}
Moreover
\begin{align*}
\left|w(g_1 \circ \phi \;u)-w(\phi u )\right|&<1\\
\left|w(\psi \circ f_0\; u)-w(\psi u)\right|&<1
\end{align*}
 therefore 
\begin{align}
\left|w(\psi \circ \phi \; u)-w(\psi  u)-w(\phi u)\right|&<2.\label{e.wind}
\end{align}
This allows to prove this particular case of \cref{p.conjugacyinvariant}:

\begin{claim}\label{c.eowfwihfe} If there exists a path  $\zeta$ in $Sk^+_{\EE,\Id_X}$ such that 
\begin{align*}
\zeta\circ \phi=\psi\circ \zeta,
\end{align*}
then the rotation numbers of $\phi$ and $\psi$ coincide.
\end{claim}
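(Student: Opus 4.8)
The plan is to use the bound~\eqref{e.wind} iteratively along the orbit, exactly as one does to prove cocycle conjugacy invariants are well defined. The hypothesis $\zeta \circ \phi = \psi \circ \zeta$ (with $\zeta = (h_t)$ a path in $Sk^+_{\EE,\Id_X}$, hence each $h_t$ a fibered homeomorphism of $\EE$ over $\Id_X$) is a path-level conjugacy, and I want to pass it to the level of $n$-th iterates. Composing the relation with itself gives, for each $t$, $h_t \circ f_t^n = g_t^n \circ h_t$, i.e.\ at the level of paths $\zeta \circ \phi^{(n)} = \psi^{(n)} \circ \zeta$. So it suffices to compare the winding numbers $w(\phi^{(n)}u)$ and $w(\psi^{(n)}u')$ and show their difference stays bounded independently of $n$, since then dividing by $n$ and letting $n \to \infty$ forces the limits (when one exists) to agree, and in particular forces existence of one from the other.

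**First I would** fix $x \in X$ and $u \in \EE_x$. Apply~\eqref{e.wind} with the two paths $\phi^{(n)}$ (in $Sk^+_{\EE,T^n}$) and $\zeta$ (in $Sk^+_{\EE,\Id_X}$, so that $T' = \Id$ composes fine): this gives $\bigl|w[(\zeta \circ \phi^{(n)})u] - w(\zeta u) - w(\phi^{(n)}u)\bigr| < 2$. Applying~\eqref{e.wind} the other way with $\psi^{(n)}$ and $\zeta$, noting $(\psi^{(n)} \circ \zeta)u = (\zeta \circ \phi^{(n)})u$ by the conjugacy, gives $\bigl|w[(\zeta \circ \phi^{(n)})u] - w(\psi^{(n)}(\zeta_a u)) - w(\zeta u)\bigr| < 2$, where $\zeta_a u$ denotes the endpoint $h_a u \in \EE_x$ (the conjugating homeomorphism fibers over $\Id_X$, so $h_a u$ lies in the same fiber). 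Subtracting, $\bigl|w(\phi^{(n)}u) - w(\psi^{(n)}(h_a u))\bigr| < 4$ for all $n$. Combined with~\eqref{e.winding}, which says $w(\psi^{(n)}\cdot)$ in a fixed fiber is independent of the base point up to $1$, I get $\bigl|w(\phi^{(n)}u) - w(\psi^{(n)}u')\bigr| < 5$ for any $u' \in \EE_x$, uniformly in $n$.

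**Then** dividing by $n$ and passing to the limit: if $\rho(x,\psi) = \lim_n \frac1n w(\psi^{(n)}u')$ exists, the uniform bound shows $\frac1n w(\phi^{(n)}u)$ has the same limit, so $\rho(x,\phi)$ exists and equals $\rho(x,\psi)$; by symmetry the roles can be reversed. This proves the claim.

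**The main subtlety** — not really an obstacle, but the point that needs care — is that~\eqref{e.wind} was stated for paths over ambient maps $T, T'$ on $\EE$, and one must check the relation $\zeta \circ \phi^{(n)} = \psi^{(n)} \circ \zeta$ is genuinely an instance of this setup (it is: $\phi^{(n)}$ fibers over $T^n$, $\psi^{(n)}$ over $T^n$, $\zeta$ over $\Id_X$, and the composed paths $\zeta \circ \phi^{(n)}$ and $\psi^{(n)} \circ \zeta$ are equal path-by-path by iterating the hypothesis), and that one correctly tracks which fiber the relevant points live in when invoking~\eqref{e.winding}. A secondary point is that the bound~\eqref{e.wind} requires the diameter-$1/2$ metric convention already adopted, which is harmless since existence and value of the rotation number do not depend on the metric. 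Everything else is the routine telescoping that underlies subadditive-cocycle arguments.
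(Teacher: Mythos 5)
Your argument is correct and follows essentially the same route as the paper: iterate the conjugacy to get $\zeta\circ\phi^{(n)}=\psi^{(n)}\circ\zeta$, apply \eqref{e.wind} to both sides of the resulting pointwise equality to bound $|w(\phi^{(n)}u)-w(\psi^{(n)}u')|$ uniformly in $n$, and divide by $n$. The paper states the uniform bound as $2w(\zeta u)+4$ rather than your $5$, but both are independent of $n$, which is all that matters.
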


\begin{proof}[Proof of \cref{c.eowfwihfe}] For all $n\in \NN$, we have $\zeta\circ \phi^{(n)}=\psi^{(n)}\circ \zeta$. Applying \cref{e.wind} to each member of the equality $\zeta\circ \phi^{(n)}\;u=\psi^{(n)}\circ \zeta\;u$ we deduce
\begin{align*}
|w(\psi^{(n)} u)-w(\phi^{(n)} u)|&<2w(\zeta u)+4
\end{align*}
for all $u\in \EE$. In particular, if one of the rotation numbers $\rho(x,\phi)$ and $\rho(x,\psi)$ exists, then both do and they coincide.
\end{proof}

We now prove \cref{p.conjugacyinvariant} in full generality. Consider two paths  $\phi=\bigl(f_t\bigr)_{t\in [a,b]}$ and $\psi=\bigl(g_t\bigr)_{t\in [a,b]}$ such that for all $t\in [a,b]$ there exists $h_t\in Sk^+_{\EE,\Id_X}$ such that $f_t\circ h_t=h_t\circ g_t$. 
The difficulty is that $h_t$ cannot in general be chosen to depend continuously on $t$. It is  quite easy to build examples of this impossibility.
We endow the sets of maps from $\EE$ to $\EE$ with the maximum distance: 
$$d(f,g)=\max_{u\in \EE\atop \iota\in \{-1,1\}} d(f^\iota u,g^\iota u).$$
Given $\delta>0$, let $H_\delta$ be the open set of homeomorphisms in $Sk^+_{\EE,\Id_X}$ such that $d(u,v)\leq \delta$ implies $d(h^\iota u,h^\iota v)< 1/4$, for $\iota=\pm 1$. 
By a compactness of $[a,b]\times \EE$, there exists $\eta_\delta>0$ such that
\begin{itemize}
\item for any $r,s\in [a,b]$ such that $|r-s|<\eta_\delta$,
\item for any $h,h'\in H_\delta$ such that $d(h,h')<\eta_\delta$,
\end{itemize}
 we have both
\[
 d(f_r,f_s)<1/4\quad \mbox{ and }\quad
 d(h\circ g_r\circ h^{-1},h'\circ g_s\circ h'^{-1})<1/4.\]
The sets $H_\delta$ are locally arc-connected and separable and $Sk^+_{\EE,\Id_X}=\cup_{k\in \NN} H_{1/k}$, therefore $Sk^+_{\EE,\Id_X}$ is a countable union of open arc-connected sets $\cU_n$, each of which is a subset of some $H_{\delta(n)}$ and has radius $<\eta_{\delta(n)}$.

 Let $I_n\subset [a,b]$ be the subset of parameters $t$ such that there exists $h\in \cU_n$ that conjugates $f_t$ and $g_t$. 
 
 \begin{claim}\label{c.efWEFWEwne}For all $r,s\in I_n$ satisfying $|r-s|<\eta_{\delta(n)}$, the path $(f_t)_{t\in [r,s]}$ has same rotation number as $(g_t)_{t\in [r,s]}$.\label{c.rfjeprj}
 \end{claim}
 
 \begin{proof}[Proof of \cref{c.rfjeprj}]
 Let $h_r,h_s\in \cU_n$ such that  $f_*=h_*\circ g_*\circ h_*^{-1}$ for $*=r,s$. Join them by a path $(h_t)_{t\in[r,s]}$ in $\cU_n$. Then by definition of  $\eta_{\delta(n)}$ and $\cU_n$, the paths $(f_t)_{t\in[r,s]}$ and $(h_t\circ g_t\circ h_t^{-1})_{t\in [r,s]}$ have diameter $<1/4$. Concatenating the first path and second path backwards gives a loop in $Sk^+_{\EE,T}$ with diameter $<1/2$. As a loop, its winding numbers at each $u\in \EE$ are integers. The diameter of the loop, which is less than the diameter of the circles of $\EE$, forces then the winding numbers to be $0$. The winding numbers of the iterated loops and therefore the rotation number is also 0 at each $x\in X$\footnote{One could actually prove that the loop is isotopically trivial in $Sk^+_{\EE,T}$.}. Thus $(f_t)_{t\in [r,s]}$ has same rotation number as $(h_t\circ g_t\circ h_t^{-1})_{t\in [r,s]}$ and therefore as $(g_t)_{t\in [r,s]}$ by  \cref{c.eowfwihfe}. 
 \end{proof}

\begin{claim}\label{c.eorfiaraa}
The interval $[a,b]$ is covered a finite or countable set of segments $\cS$ whose interiors are pairwise disjoint and such that  the rotation numbers of $(f_t)$ and $(g_t)$ coincide along each $S\in \cS$.
\end{claim}

\begin{proof}
Define inductively a sequence of subsets $\Sigma_n\subset [a,b]$ for $n\geq -1$ as follows:
\begin{itemize}
\item let $\Sigma_{-1}=\emptyset$, 
\item let $\Sigma_n$ be the union of segments in $[a,b]\setminus \Sigma_{n-1}$ of length $<\eta_{\delta(n)}$ whose extremities lie in $I_n$.
\end{itemize}
We let the reader check inductively that each $\Sigma_n$ is a finite union of intervals. Then each $\Sigma_n$ is a union of finitely or countably many segments whose extremities lie in $I_n$, whose lengths are $<\eta_{\delta(n)}$, and whose interiors are pairwise disjoint.  We have $[a,b]=\cup_{N\in \NN} I_n$ therefore  the sets $\Sigma_n$ form a partition  of $[a,b]$.
On concludes with  \cref{c.rfjeprj}.
\end{proof}

We now prove that  $(f_t)_{t\in [a,b]}$ and $(g_t)_{t\in [a,b]}$ have same rotation numbers. This is less obvious than it could seem, one difficulty being that $(f_t)$ and $(g_t)$ having same rotation number along an interval $[r,s]$ does not a priori mean that this also holds along subintervals.
Let $\cI$ be the countable set of intervals in $[a,b]$ whose extremities are extremities of intervals of $\cS$. Let $\cJ$ be the set of intervals $J\in \cI$ such that for each $I\in \cI$ such that $I\subset J$, the rotation numbers of $(f_t)$ and $(g_t)$ coincide along $I$. Clearly $\cS\subset \cJ$.

By construction of $\cJ$, if an interval $J$  is a union of two intervals of $\cJ$, then it is a union $J=J_1\cup J_2$ of two intervals of $\cJ$ whose interiors are disjoint. Using additivity of the rotation number for concatenation, we easily get that the rotation numbers of  $(f_t)$ and $(g_t)$ coincide along $J$, and likewise for any $J'\in \cJ$ such that $J'\subset J$, therefore  $J\in \cJ$. Thus $\cJ$ is stable by finite connected unions. 

Likewise, if an interval $J$ is the union of a nested sequence $J_1\subset J_2\subset \ldots $ of $\cJ$, then the continuity of the rotation number implies that the rotation numbers coincide along any $J'\in \cJ$ such that $J'\subset J$, therefore  $J\in \cJ$. Thus $\cJ$ is stable by union of nested sequences.

Given $x\in [a,b]$, let  $J_x$ be the union of intervals $J\in \cJ$ that contain $x$. This is a countable union since $\cJ$ is countable. Since $\cJ$ is stable by finite connected unions, $J_x$ can be written as the union of a nested sequence in $\cJ$, hence $J_x\in \cJ$, that is $J_x$ is the maximal element of $\cJ$ that contains $x$. If $z$ is an extremity of $J_x$ then the connected union $J_x\cup J_z$ of two elements of $\cJ$ is in $\cJ$ and is equal to $J_x$ by maximality. This proves that $J_x$ contains its extremities: it is a segment. 
Since $\cJ\subset \cI$ is countable, the sets $J_x$ form a countable partition of $[a,b]$ into segments. This implies that $J_x=[a,b]$ for all $x$. In particular the rotation numbers of $(f_t)$ and $(g_t)$ coincide along $[a,b]$. This ends the proof of \cref{p.conjugacyinvariant}.
\end{proof}

Choose a simply connected neighborhood $\cU$ of $f$ in $Sk_{\EE,T}^+$. Then any $g\in \cU$ has a well-defined {\em  rotation number relative to $f$ in $\cU$} defined as follows:
$$\rho_f(\mu,g):=\rho(\mu,\phi)$$
where $\phi$ is a path in $\cU$ going from $f$ to $g$. 

\begin{remark}\label{r.germsk}
Another choice of $\cU$ gives another function $\rho'_f$ which coincides with $\rho_f$ on a neighborhood of $f$. In other words, the germ at $f$ of the function
$$\begin{cases}
\cU&\to \cL\bigl[\cM(T),\RR\bigr]\\
g&\mapsto \rho_f(\cdot,g)
\end{cases}$$ is well-defined independently of $\cU$. 
\end{remark}

The usual mode-locking notion \cite{BJ} naturally extends to our setting:

\begin{definition}
A skew-product $f\in Sk_{\EE,T}^+$ on $\EE$ is {\em mode-locked} over the $T$-invariant finite measure $\mu$ if the relative rotation number vanishes on a neighborhood of $f$. 
\end{definition}

Note that $f$ is mode-locked over $\mu$ if all paths in some neighborhood of $f$ have zero rotation number. We call the skew-product {\em lower semi-locked} (resp. {\em upper semi-locked}) over $\mu$ if the relative rotation number is $\geq 0$ (resp. $\leq 0$) on a neighborhood of $f$.
We call a path $\phi$ {\em strictly increasing} if, for any $u\in \EE$, the path $\phi u$ is strictly increasing with respect to the orientation on the fibers of $\EE$.

\begin{lemma}\label{l.modelocking.increasing}
A skew-product $f$ is not upper semi-locked over $\mu$ if and only if the strictly increasing paths $\phi=\bigl(f_t\bigr)_{t\in [0,1]}$ starting at $f=f_0$  have all strictly positive rotation number above $\mu$.
\end{lemma}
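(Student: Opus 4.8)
The plan is to prove both implications directly, exploiting the additivity of the rotation number under concatenation (\cref{p.rotationmorphism}) and the local structure of $Sk^+_{\EE,T}$. For the easy direction, suppose $f$ is not upper semi-locked over $\mu$: then in every neighborhood of $f$ there is a path with strictly positive rotation number above $\mu$, so in particular there is \emph{some} path $\psi$ from $f$ ending at some nearby $g$ with $\rho(\mu,\psi)>0$. I would first reduce to showing that $g$ can be joined to $f$ by a strictly increasing path $\phi$ with $\rho(\mu,\phi)\ge \rho(\mu,\psi)$; concatenating $\psi$ with a strictly increasing ``reverse'' path back to $f$, or more simply observing that the relevant loop is isotopically trivial in a simply connected neighborhood (so has zero rotation number), lets one transfer positivity. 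The key local fact is that near any skew-product one can prescribe a strictly increasing perturbation whose winding is as large as one likes at every point: fiberwise, post-composing $f_t$ with rotations through a continuous bump turns the fibers strictly forward, and the winding number along such a path dominates that of $\psi$ fiberwise up to the universal bound \eqref{e.winding}. Taking $n$-th iterates and dividing by $n$ kills the bounded error, so the rotation number of the strictly increasing path is $\ge \rho(\cdot,\psi)$ pointwise $\mu$-a.e., hence $>0$ after integration.

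For the converse, suppose every strictly increasing path starting at $f$ has strictly positive rotation number above $\mu$; I must show $f$ is not upper semi-locked, i.e.\ every neighborhood of $f$ contains a path with positive rotation number — but this is immediate, since a short strictly increasing path starting at $f$ stays in any prescribed neighborhood and already has positive rotation number. So the content is entirely in the first implication.

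The main obstacle, and the step I would spend the most care on, is the \emph{monotonicity/comparison} estimate: showing that an arbitrary path $\psi$ from $f$ to a nearby $g$ with $\rho(\mu,\psi)>0$ forces the \emph{strictly increasing} paths from $f$ to also have positive rotation number. The subtlety is that $\psi$ need not be increasing, so its positive rotation number is an asymptotic statement about iterates, and one must compare the winding of $\psi^{(n)}$ with that of a strictly increasing path $\phi^{(n)}$ ending at the same $g$. The cleanest route is: (i) the concatenation $\phi * \bar\psi$ (strictly increasing path $f\to g$, then $\psi$ reversed $g\to f$) is a loop in a simply connected neighborhood of $f$, hence isotopically trivial, hence has rotation number $0$ above $\mu$; (ii) by additivity, $\rho(\mu,\phi) = \rho(\mu,\psi) > 0$. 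Thus it suffices to produce \emph{one} strictly increasing path from $f$ to the given $g$ — and since $g$ is $C^0$-close to $f$, one builds it by first flowing strictly forward by a full turn on each fiber (a large strictly increasing loop, contributing an integer winding $\ge 1$) and then correcting back to $g$ inside the simply connected neighborhood; alternatively, if $g$ happens already to lie ``above'' $f$ fiberwise one flows directly. The remaining routine point is that a path that is strictly increasing except possibly on a trivial (isotopically null, hence zero-rotation) correction segment still has rotation number equal to that of the genuinely increasing part, and that \emph{every} strictly increasing path from $f$ has the same sign of rotation number — which follows because any two strictly increasing paths $f \to g$ differ by an isotopically trivial loop, again by simple connectedness of the neighborhood. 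Once this comparison is in place, the lemma follows by integrating the pointwise inequalities against $\mu$ and invoking \cref{p.rotationmorphism}.
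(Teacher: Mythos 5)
There is a genuine gap in the hard direction, and it comes from the order of quantifiers. The lemma asserts that \emph{every} strictly increasing path starting at $f$ (with arbitrary endpoint) has positive rotation number, but your argument only ever discusses paths ending at the particular $g$ supplied by the failure of upper semi-locking. The bridge you propose — that every strictly increasing path from $f$ has the same sign of rotation number because any two such paths ``differ by an isotopically trivial loop by simple connectedness of the neighborhood'' — is false: two strictly increasing paths from $f$ need not share an endpoint, need not stay in any prescribed neighborhood, and even with the same endpoint can differ by a full turn (hence lie in different homotopy classes rel endpoints). Your explicit construction makes the problem visible: the ``full turn plus correction'' path has rotation number at least $1$ \emph{whether or not} $f$ is upper semi-locked (the full-turn loop alone contributes $\geq 1$ by superadditivity), so it detects nothing about mode-locking; moreover it lies in a different homotopy class from $\psi$, so the additivity step $\rho(\mu,\phi)=\rho(\mu,\psi)$ in your route (i)--(ii) does not apply to it, and its correction segment need not be increasing.

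The correct mechanism reverses your quantifiers. Fix an \emph{arbitrary} strictly increasing path $\phi$ first; by compactness its one-step winding satisfies $w(\phi u)>\epsilon$ for all $u\in\EE$ and some $\epsilon\in(0,1/2)$. Only then use the failure of upper semi-locking to pick a path $\psi$ inside the $\epsilon$-neighborhood of $f$ with $\rho(\mu,\psi)>0$; such $\psi$ satisfies $w(\psi u)<\epsilon<w(\phi u)$ for all $u$. The comparison must then be propagated to all iterates by fiberwise monotonicity of lifts of orientation-preserving circle homeomorphisms (the endpoint of $\phi$ lies above that of $\psi$ in the universal cover of each fiber, and this order is preserved under composition), yielding $w(\psi^{(n)}u)<w(\phi^{(n)}u)$ for all $n$ and hence $\rho(\mu,\phi)\geq\rho(\mu,\psi)>0$. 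Note that \cref{e.winding} alone does not give this: it compares starting points of one path, not two different paths, and a one-step bound does not persist under iteration without the monotonicity argument. Your first paragraph gestures at this comparison but applies it in the wrong direction (building an increasing path that dominates $\psi$, rather than showing a given increasing path dominates a suitably small $\psi$).
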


\begin{proof}
The converse implication is straightforward. Let us show the direct one. 
Take a strictly increasing path $\phi=\bigl(f_t\bigr)_{t\in [0,1]}$ starting at $f=f_0$. By compactness we find $0<\epsilon<1/2$ such that for all $u\in \EE$,
 $$\epsilon<w(\phi u).$$
Let $\cU$ be a simply connected $\epsilon$-neighborhood of $f$, and let $\psi$ be a path in $\cU$ from $f$ to some $g\in \cU$ with strictly positive rotation number relative to $f$, that is, $\rho_f(g,\mu)=\rho(\psi,\mu)> 0.$
Then, for all $u\in \EE$,
 $$w(\psi u)<\epsilon<w(\phi u).$$
By a monotonicity argument  $w(\psi^{(n)} u)<w(\phi^{(n)} u)$, for all $u\in \EE$ and $n\in\NN$. Therefore
$0<\rho(\psi,\mu)\leq \rho(\phi,\mu)$.
\end{proof}

Changing orientation on $\EE$, one deduces from this lemma that lower semi-locking is equivalent to all strictly decreasing paths starting at $f$ having strictly negative rotation numbers. In particular we get:

\begin{corollary}\label{c.modelocking.increasing}
A skew-product $f$ is not mode-locked over $\mu$ if and only if at least one of the two following statements is true:
\begin{itemize}
\item  the strictly increasing paths $\phi=\bigl(f_t\bigr)_{t\in [0,1]}$ starting at $f=f_0$ have strictly positive rotation number above $\mu$,
\item  the strictly decreasing paths starting at $f$ have strictly negative rotation number.
\end{itemize}
\end{corollary}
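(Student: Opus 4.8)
The plan is to deduce \cref{c.modelocking.increasing} directly from \cref{l.modelocking.increasing} together with its orientation-reversed counterpart, after unwinding the definition of mode-locking into its two one-sided halves.

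First I would record the elementary observation that $f$ is mode-locked over $\mu$ if and only if it is both upper semi-locked and lower semi-locked over $\mu$. The implication $\Rightarrow$ is immediate: on a neighborhood where the relative rotation number vanishes it is in particular everywhere $\leq 0$ and everywhere $\geq 0$. For the converse, suppose the relative rotation number is $\leq 0$ on a neighborhood $\cU_1$ and $\geq 0$ on a neighborhood $\cU_2$ of $f$; by \cref{r.germsk} the germ at $f$ of the relative rotation number is well defined, so on a small enough simply connected neighborhood $\cU\subset \cU_1\cap \cU_2$ the relative rotation number is unambiguously defined and is simultaneously $\leq 0$ and $\geq 0$, hence identically $0$, i.e. $f$ is mode-locked. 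Negating, $f$ fails to be mode-locked over $\mu$ exactly when it is not upper semi-locked or not lower semi-locked over $\mu$.

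Next I would invoke \cref{l.modelocking.increasing}, which gives that failure of upper semi-locking over $\mu$ is equivalent to every strictly increasing path issued from $f$ having strictly positive rotation number above $\mu$. Reversing the orientation of the fibres of $\EE$ exchanges increasing and decreasing paths, swaps upper and lower semi-locking, and reverses the sign of every winding number and hence of every rotation number; applying \cref{l.modelocking.increasing} in this reversed setting therefore shows that failure of lower semi-locking over $\mu$ is equivalent to every strictly decreasing path issued from $f$ having strictly negative rotation number above $\mu$. This is exactly the orientation-reversal device already used in the proof of \cref{l.modelocking.increasing} and in the paragraph preceding \cref{c.modelocking.increasing}, so no new argument is needed. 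Combining these two equivalences with the dichotomy of the previous paragraph yields the corollary.

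I do not expect a genuine obstacle here; the only point requiring a little care is the neighborhood bookkeeping above --- making sure that the two semi-locking hypotheses, each a priori witnessed on its own neighborhood and with its own choice of simply connected neighborhood defining the relative rotation number, can be merged on a single neighborhood on which that rotation number is unambiguous. This is precisely what the germ formulation of \cref{r.germsk} provides.
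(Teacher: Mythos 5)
Your proposal is correct and follows the same route as the paper, which deduces the corollary in one line from \cref{l.modelocking.increasing} and its orientation-reversed version, using that mode-locking is the conjunction of upper and lower semi-locking. The extra care you take with merging the two neighborhoods via the germ formulation of \cref{r.germsk} is a harmless elaboration of what the paper leaves implicit.
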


\begin{lemma}\label{l.rotationcriteria}
If $\phi=(f_t)$ is a strictly increasing path, then the following conditions are equivalent:
\begin{itemize}
\item[$(i)$]  the rotation number satisfies $\rho(\mu,\phi)>0$,
\item[$(ii)$]  for some $x\in \supp(\mu)$ and some $N\in \NN$, 
$$\sigma_N(x):=\min_{u\in E_x}w(\phi^{(N)} u)>1.$$
\end{itemize}
\end{lemma}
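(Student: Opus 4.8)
The plan is to prove the two implications separately, leaning on \cref{p.rotationmorphism}: recall that by Kingman's theorem $\rho(x,\phi)=\lim_{n}\tfrac1n\sigma_n(x,\phi)$ for $\mu$-a.e.\ $x$, that $\rho(\mu,\phi)=\int_X\rho(\cdot,\phi)\,d\mu$, and that each $\sigma_n(\cdot,\phi)$ is continuous. I will also use that, $\phi$ being strictly increasing, every iterate $\phi^{(n)}$ is strictly increasing — composing fiberwise orientation-preserving maps that all move forward in $t$ still moves forward in $t$ — so $\sigma_n(\cdot,\phi)>0$ everywhere and hence $\rho(\cdot,\phi)\geq 0$ $\mu$-a.e. (The case $\mu=0$ is trivial, so assume $\mu\neq 0$.)

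\emph{From $(i)$ to $(ii)$.} If $\rho(\mu,\phi)=\int\rho(\cdot,\phi)\,d\mu>0$ while $\rho(\cdot,\phi)\geq 0$ a.e., the set $A=\{\rho(\cdot,\phi)>0\}$ has positive $\mu$-measure, hence meets $\supp(\mu)$; choose $x\in A\cap\supp(\mu)$. For any $u\in\EE_x$ one has $\tfrac1n w(\phi^{(n)}u)\to\rho(x,\phi)>0$, so $w(\phi^{(n)}u)\to+\infty$, and then \eqref{e.winding} forces $\sigma_N(x)=\min_{u'\in\EE_x}w(\phi^{(N)}u')>1$ for all large $N$. This direction is essentially immediate.

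\emph{From $(ii)$ to $(i)$ — the substantial one.} Assume $\sigma_N(x_0)>1$ with $x_0\in\supp(\mu)$; by continuity there is an open $U\ni x_0$ with $\sigma_N>1$ on $U$, and $\mu(U)>0$. The crux is a ``superadditivity up to a floor'': for all $x$ and all $u\in\EE_x$,
$$w(\phi^{(n+m)}u)\ \geq\ \sigma_m(T^nx,\phi)\ +\ \bigl\lfloor\sigma_n(x,\phi)\bigr\rfloor .$$
To prove it, homotope the diagonal path $t\mapsto f^m_t(f^n_t u)$, rel endpoints inside the square $(s,t)\mapsto f^m_s(f^n_t u)$, to the concatenation of $t\mapsto f^m_0(f^n_t u)$ and $s\mapsto f^m_s(f^n_1 u)$: the winding of the second piece is $\geq\sigma_m(T^nx,\phi)$ since $f^n_1u\in\EE_{T^nx}$, while the winding of the first equals $\widetilde{f^m_0}(y+s)-\widetilde{f^m_0}(y)\geq\lfloor s\rfloor$ for $s=w(\phi^{(n)}u)\geq\sigma_n(x,\phi)$, because a lift of a degree-one orientation-preserving circle homeomorphism is increasing and commutes with $y\mapsto y+1$. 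Applying this with $m=N$, $n=kN$ and using $\sigma_\bullet\geq 0$, an induction on $k$ (base case trivial; at each step $\sigma_{(k+1)N}(x)\geq\sigma_N(T^{kN}x)+\lfloor\sigma_{kN}(x)\rfloor$, with $\sigma_{kN}(x)$ already $\geq$ an integer by the inductive hypothesis and $\sigma_N(T^{kN}x)>1$ exactly when $T^{kN}x\in U$) yields
$$\sigma_{kN}(x)\ \geq\ \#\{\,0\leq j<k:\ T^{jN}x\in U\,\}.$$
Dividing by $kN$, letting $k\to\infty$, and invoking Birkhoff's ergodic theorem for the $T^N$-invariant measure $\mu$ gives $\rho(x,\phi)\geq\tfrac1N g(x)$ for $\mu$-a.e.\ $x$, where $g=\lim_k\tfrac1k\sum_{j<k}\mathbf 1_U\circ T^{jN}$ satisfies $\int g\,d\mu=\mu(U)$. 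Integrating, $\rho(\mu,\phi)\geq\mu(U)/N>0$.

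The step I expect to be the main obstacle is the floor in the superadditive inequality: winding numbers are real-valued, so $\sigma_n$ is genuinely only superadditive up to an error of size $1$, and a careless estimate loses a constant at every iteration and therefore cannot survive the $\tfrac1{kN}$ normalization. The remedy is precisely strict monotonicity, which keeps every winding number positive and the floors in the induction non-decreasing, so that the visits to $U$ contribute linearly. The rest — the homotopy bookkeeping and the ergodic-theoretic averaging — is routine.
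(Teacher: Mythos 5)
Your proof is correct. The implication $(i)\Rightarrow(ii)$ is exactly the paper's (which dismisses it as ``nearly by definition''). For $(ii)\Rightarrow(i)$ you take a genuinely different, and in fact more careful, route. The paper applies Kingman's theorem to ``the superadditive sequence $(\sigma_n)$'' and concludes directly that $\rho(\mu,\phi)=\sup_n\frac1n\int\sigma_n\,d\mu\geq\frac1N\int\sigma_N\,d\mu>0$. But, as you correctly observe, $(\sigma_n)$ is only superadditive up to an additive error of $1$: the winding number of the image of a path under an orientation-preserving circle homeomorphism agrees with the winding number of the original path only up to rounding, so the true inequality is $\sigma_{n+m}(x)\geq\lfloor\sigma_n(x)\rfloor+\sigma_m(T^nx)$. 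Applying Kingman to the exactly superadditive sequence $\sigma_n-1$ yields only $\rho(\mu,\phi)\geq\frac1N\bigl(\int\sigma_N\,d\mu-1\bigr)$, which need not be positive when $\mu(V)$ is small; this is precisely why the hypothesis in $(ii)$ is $\sigma_N(x)>1$ rather than $\sigma_N(x)>0$, and the paper's proof as written does not exploit that threshold. Your floor-corrected superadditivity, the integer-valued induction $\sigma_{kN}(x)\geq\#\{0\leq j<k:\ T^{jN}x\in U\}$, and the appeal to Birkhoff's theorem for $T^N$ are exactly what make the threshold $1$ do its job, and they yield the clean quantitative bound $\rho(\mu,\phi)\geq\mu(U)/N$. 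In short: the paper's argument is shorter but, read literally, rests on an exact superadditivity that fails; yours is longer but closes that gap.
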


In particular, mode-locking is not a measure theoretic notion but a topological property of the dynamics above the support of $\mu$ (note that the second item does not depend on the choice of the metric on $\EE$).

\begin{proof}Recall that (nearly) by definition
 $\rho(x,\phi)=\lim\frac{1}{n}\sigma_n(x).$
Hence the implication $(i)\Rightarrow (ii)$. Let us show $(ii)\Rightarrow (i)$. 

Under the assumptions of $(ii)$, uniform continuity of $u\mapsto w(\phi^{(N)} u)$ gives $\epsilon>0$ and an open set $V$ of points $x\in \supp(\mu)$ such that $\sigma_N(x)>1+\epsilon$. Kingman's theorem applied to the superadditive sequence $(\sigma_n)$ gives
 $$\rho(\mu,\phi)=\sup_{n>0} \frac{1}{n}\int\sigma_n(\cdot,\phi)d\mu\geq \frac{1}{N}\int\sigma_N(\cdot,\phi)d\mu.$$
We have $\sigma_N\geq 0$ since $\phi$ is strictly increasing. From $\mu(V)>0$ we deduce $(i)$.
\end{proof}

\section{Rotation numbers of perturbations of linear cocycles and diffeomorphisms}\label{s.varyingbundles}
Consider a dominated splitting for a linear cocycle or a diffeomorphism restricted to a hyperbolic set. That dominated splitting admits a continuation when the dynamics is perturbed. Given a path of perturbations, the projectivization of a 2-dimensional bundle of the dominated splitting is then a path $\EE_t$ of circle bundles. We would like to define a rotation number along that path.
The problem is that there is no canonical identification of $\EE_t$ to $\EE_0$. Cohomology invariance will help us.

\subsection{Rotation numbers along paths of circle bundles.}\label{s.oafan}
We now allow the bundle $\EE$ to vary with $t$. 
Let  $\GG$ be a fibre bundle with base $X$ and endowed with some distance $d_\GG$.  Given two maps with same domain $f,g\colon K\to \GG$ define the distance 
$$d_{\sup{}}(f,g)=\sup_{u\in K}d(fu,gu).$$   
Let $(\cE,d)$ be a space of circle subbundles of $\GG$ endowed with the following distance, where the infimum is taken over the homeomorphisms $h\colon \EE \to \FF$ fibering on $\Id_X$:
$$d(\EE,\FF)=\inf_{h}d_{\sup{}}(\Id_\EE,h).$$
 Assume that there exists $\alpha>0$ and a family of homeomorphisms $h_{\EE,\FF}\colon \EE\to \FF$  fibering over $\Id_X$ indexed by the pairs $(\EE,\FF)\in \cE^2$ with $d(\EE,\FF)<\alpha$ such that 
\begin{itemize}
\item[($i$)] the map $(\EE,\FF,u)\mapsto h_{\EE,\FF}(u)$ is continuous on its domain,
\item[($ii$)] $d(\EE,\FF)$ and $d_{\sup{}}(\Id_\EE,h_{\EE,\FF})$ are equivalent, that is, there exists $C>1$ such that for all $\EE,\FF\in \cE$ with $d(\EE,\FF)<\alpha$ we have 
$$C^{-1}.d(\EE,\FF)<d_{\sup{}}(\Id_\EE,h_{\EE,\FF})<C.d(\EE,\FF).$$
\end{itemize}

\begin{example}\label{e.vectorbundlesexample}
Let $\GG=\PP V=\sqcup_{x\in X}\PP V_x$ be the projectivization of the Euclidean bundle $V$ and let $\cE$ be the set of continuous projective line subbundles of $\GG$. Define then $h_{\EE,\FF}$ on each fiber $\GG_x$ as the orthogonal projection on $\FF_x$ (which is well-defined on $\EE_x$ when $\EE$ is close enough to $\FF$).
\end{example}

\begin{example}\label{e.diffeosexample}
Take $\GG=X\times \PP TM$, where $M$ is a smooth Riemannian manifold and $\PP TM$ is the projectivization of the tangent bundle of $M$.  Let $\cE$ be the set of continuous circle subbundles $\EE$ of  $\GG$ such that each fiber $\EE_x$ is a projective line in a fibre $\PP T_{p_\EE(x)}M$, where $p_\EE \colon X\to M$ is a continuous map. Define the homeomorphisms $h_{\EE,\FF}$ as follows: 

If $\EE$ and $\FF$ are close enough, then the fibers $\EE_x$ and $\FF_x$ lie in two neighboring fibers $\PP T_pM$ and $\PP T_qM$. Identify them by parallel transport along the geodesic from $y$ to $z$ and project orthogonally on $\FF_x$.
\end{example}

Let $Sk^+_{\cE,T}$ be the set of pairs $(\EE,f)$ where $\EE\in \cE$ and $f$ is an element of $Sk_{\EE,T}^+$ as defined in the previous section. 
Extend the $C^0$-distance on $Sk^+_{\EE,T}$ to a distance on $Sk^+_{\cE,T}$ by taking the following infimum over the homeomorphisms $h\colon \EE\to \FF$ fibering over $\Id_X$:
$$\dist\bigl[(\EE,f), (\FF,g)\bigr]=\inf_h \bigl[d_{\sup{}}(\Id_\EE,h)+d_{\sup{}}(f,h^{-1}\circ g\circ h)\bigr].$$

Note that $\dist\bigl[(\EE,f), (\FF,g)\bigr]\geq d(\EE,\FF)$. Given a path $\phi=(\EE_t,f_t)_{t\in [0,1]}$ in $Sk^+_{\cE,T}$, covering the path $\EE_t$ by finitely many balls of radius $\alpha$ and composing maps $h_{\EE,\FF}$, one easily builds a family of homeomorphisms  fibering over $\Id_X$:
\begin{equation}
h_t \colon \EE_0\to \GG\label{e.ht}
\end{equation}
such that $(t,u)\mapsto h_tu$ is continuous.  The path
$$\phi_{0}=\bigl(h_t^{-1}\circ f_t\circ h_t\bigr)_{0\leq t\leq 1}$$ 
 in $Sk^+_{\EE_0,T}$ is not uniquely defined, but any other choice of a path $h_t$ would give another path $\psi_{0}$ that coincides with $\phi_{0}$ in cohomology. By \cref{p.conjugacyinvariant}, the rotation number 
$$\rho(x, \phi):=\rho(x, \phi_0)$$
is hence well-defined for $\mu$-a.e. points $x\in X$. 
One then extends \cref{p.rotationmorphism} to varying bundles:

\begin{proposition}\label{p.rotationmorphismvarying2}
Fix a path $\phi$ in $Sk_{\cE,T}^+$ and a measure $\mu\in \cM(T)$. The rotation number $\rho(\cdot, \phi)$ is defined $\mu$-a.e., bounded, $T$-invariant, measurable and only depends on the homotopy class of $\phi$ with fixed extremities. Moreover, 
$$\rho(\mu,\phi):= \int_X \rho(\cdot,\phi)d\mu$$
 defines a continuous map $\rho\colon \cM(T)\times P_1(Sk_{\cE,T}^+)\to \RR$. 
\end{proposition}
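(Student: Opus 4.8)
The plan is to reduce \cref{p.rotationmorphismvarying2} to \cref{p.rotationmorphism} by fixing, once and for all, a continuous family of trivializing homeomorphisms $h_t\colon \EE_0\to \EE_t$ as in \eqref{e.ht}, and pushing everything back to the fixed circle bundle $\EE_0$. Concretely, given a path $\phi=(\EE_t,f_t)_{t\in[0,1]}$ in $Sk^+_{\cE,T}$, I would first invoke the construction preceding the statement: cover the compact image $\{\EE_t : t\in[0,1]\}$ by finitely many $d$-balls of radius $\alpha$, and on overlaps compose the given homeomorphisms $h_{\EE,\FF}$ (using continuity $(i)$) to obtain a continuous family $(t,u)\mapsto h_t u$ with $h_0=\Id_{\EE_0}$ and $h_t\in\Homeo_{\Id}(\EE_0,\EE_t)$. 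Then $\phi_0:=(h_t^{-1}\circ f_t\circ h_t)_{t\in[0,1]}$ is a genuine path in $Sk^+_{\EE_0,T}$, and one sets $\rho(x,\phi):=\rho(x,\phi_0)$.

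The first substantive point is well-definedness: a second choice $h'_t$ of trivializing family produces a path $\phi_0'$ with $[h_t^{-1} f_t h_t]=[(h'_t)^{-1} f_t h'_t]$ for every $t$ (conjugation by the continuous loop-valued map $k_t:=(h'_t)^{-1}h_t\in Sk^+_{\EE_0,\Id_X}$), so by \cref{p.conjugacyinvariant} the two have the same rotation number at every $x$ where either exists. Hence $\rho(\cdot,\phi)$ is unambiguous. All the pointwise/measure-theoretic assertions — defined $\mu$-a.e., bounded, $T$-invariant, measurable — are then inherited verbatim from \cref{p.rotationmorphism} applied to $\phi_0\in P_1(Sk^+_{\EE_0,T})$, and $\rho(\mu,\phi)=\int_X\rho(\cdot,\phi_0)\,d\mu$ is well-defined. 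Invariance under homotopy with fixed extremities: a homotopy $(\EE_t^{(s)},f_t^{(s)})$ in $Sk^+_{\cE,T}$ rel endpoints can be trivialized by a jointly continuous family $h_t^{(s)}$ (same covering argument applied to the compact $2$-parameter image, shrinking if necessary), yielding a homotopy $\phi_0^{(s)}$ rel extremities in $Sk^+_{\EE_0,T}$; now apply the corresponding clause of \cref{p.rotationmorphism}.

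The remaining assertion is continuity of $\rho\colon\cM(T)\times P_1(Sk^+_{\cE,T})\to\RR$, and this is where the main care is needed, because the fixed bundle $\EE_0$ changes when the path $\phi$ varies — so one cannot apply \cref{p.rotationmorphism} globally on a single $\EE_0$. The approach is local: fix $(\mu_0,\phi_0)$ with $\phi_0=(\EE_t,f_t)$; for $(\mu,\psi)$ with $\psi=(\FF_t,g_t)$ $\dist$-close to $\phi_0$, note $d(\FF_t,\EE_t)\le\dist[(\FF_t,g_t),(\EE_t,f_t)]$ is uniformly small, so one may use the $h_{\EE,\FF}$'s to push $\psi$ onto the circle bundle $\EE_0$ via $\bar h_t:=h_{\FF_t,\EE_t}\circ k_t$ (with $k_t$ the fixed trivialization of $\phi_0$), obtaining $\bar\psi_0\in Sk^+_{\EE_0,T}$; the equivalence $(ii)$ between $d$ and $d_{\sup}(\Id,h_{\EE,\FF})$ guarantees $\bar\psi_0$ is $C^0$-close to $\phi_{0,0}$ (the pushforward of $\phi_0$). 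Then $\rho(\mu,\psi)=\rho(\mu,\bar\psi_0)$ by well-definedness, and continuity follows from \cref{p.rotationmorphism} on the now-fixed pair $(\EE_0,T)$. I expect the bookkeeping in this last step — checking that the composed trivializations depend continuously on $\psi$ and that the $C^0$-distance estimate survives the composition — to be the main (though essentially routine) obstacle; everything else is a direct transcription of the fixed-bundle results through \cref{p.conjugacyinvariant}.
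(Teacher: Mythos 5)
Your proposal is correct and follows essentially the same route as the paper: define $\rho(\cdot,\phi)$ by pushing the path onto the fixed bundle $\EE_0$ via the trivializing family $h_t$, use \cref{p.conjugacyinvariant} for well-definedness and to inherit the measure-theoretic properties from \cref{p.rotationmorphism}, and prove homotopy invariance and continuity by conjugating the whole (compact) family of nearby paths into a single circle bundle using the $h_{\EE,\FF}$ and property $(ii)$. (Only a cosmetic slip: your $\bar h_t=h_{\FF_t,\EE_t}\circ k_t$ should be $h_{\EE_t,\FF_t}\circ k_t$, or equivalently $k_t^{-1}\circ h_{\FF_t,\EE_t}$ for the map $\FF_t\to\EE_0$.)
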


Again, we call $\rho(\mu,\phi)$ the {\em rotation number} of $\phi$ above the measure $\mu$. Given a compact space $K$, we say that two continuous families $(\EE_\kappa,f_\kappa)_{\kappa\in K}$ and $(\FF_\kappa,g_\kappa)_{\kappa\in K}$ of skew-products in $Sk_{\cE, T}^+$   {\em coincide in cohomology} if for all $\kappa$ the maps $f_\kappa$ and $g_\kappa$ are cohomologous, that is, if there exists a homeomorphism $h_\kappa\colon \EE_\kappa\to \FF_\kappa$ fibering over $\Id_X$ such that 
$$f_\kappa\circ h_\kappa=h_\kappa\circ g_\kappa.$$

\begin{proof}[Proof of \cref{p.rotationmorphismvarying2}]
The only points that need to be explained are the continuity and the invariance by  isotopy with fixed extremities, the rest is given by \cref{p.conjugacyinvariant}. 

Consider an isotopy $(\phi_s)_{0\leq s\leq 1}=(\EE_{t,s},f_{t,s})_{0\leq s,t\leq 1}$ with fixed extremities between two paths $\phi_0=(\EE_{t,0},f_{t,0})$ and $\phi_1=(\EE_{t,1},f_{t,1})$ of $Sk_{\cE, T}^+$.
Let $\alpha>0$ be as defined in the beginning of \cref{s.oafan}.
Cutting the isotopy into finitely many small pieces, showing invariance by isotopy reduces to showing it for each of these pieces.
Therefore we may assume that $d(\EE_{t,s},\EE_{t,0})<\alpha$ for all $s,t\in [0,1]$. Then conjugate by $h_{\EE_{t,0},\EE_{t,s}}$: the isotopy $(\phi_s)$ coincides in cohomology to an isotopy $(\EE_{t,0},\widehat{f}_{t,s})_{0\leq s,t\leq 1}$ with fixed extremities. Conjugating by the path of homeomorphisms  $h_t\in \Homeo_{\Id}(\EE_0,\EE_t)$ of \cref{e.ht}, we get that it coincides in cohomology to an isotopy $\psi_s=(\EE_{0,0},\tilde{f}_{t,s})_{0\leq s,t\leq 1}$ with fixed extremities in $Sk^+_{\EE_{0,0},T}$. As a consequence of the invariance by isotopy in  \cref{p.rotationmorphism} and of cohomology invariance (\cref{p.conjugacyinvariant}), the rotation numbers of $\phi_0$ and $\phi_1$ coincide. 

The proof of continuity goes the same way: take a sequence $\mu_n$ of measures tending to $\mu_\infty$ and a sequence $\phi_n$ of paths of $Sk_{\cE, T}^+$ tending to $\phi_\infty$ for the topology of uniform convergence. This corresponds to taking a continuous family 
$(\EE_{t,n},f_{t,n})_{(t,n)\in K}$ where $K$ is the compact set $[0,1]\times (\NN\cup \{\infty\})$. One may assume that all paths $\phi_n$ are at maximum distance $<\alpha$ from $\phi_\infty$.   As in the previous paragraph, the family $\bigl(\phi_n\bigr)_{n\in \NN\cup\{\infty\}}$  then coincides in cohomology to a continuous family  $\bigl(\psi_n\bigr)_{n\in \NN\cup\{\infty\}}$ of paths in $Sk^+_{\EE_{0,\infty},T}$. The continuity given by \cref{p.rotationmorphism} and the cohomology invariance imply that $\rho(\mu_n,\phi_n)$ tends to $\rho(\mu_\infty,\phi_\infty)$.
\end{proof}

The obvious extension of \cref{p.conjugacyinvariant} to this setting is the following:
\begin{proposition}\label{p.conjugacyinvariant2}
If two paths $(\EE_t,f_t)$ and $(\FF_t,g_t)$ in $Sk_{\cE, T}^+$ coincide in cohomology, then they have same rotation numbers.
\end{proposition}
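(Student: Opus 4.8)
The plan is to reduce the statement to \cref{p.conjugacyinvariant} by using the straightening maps of \cref{e.ht} to push both paths into honest paths inside a single fixed circle bundle, and then to produce the required fiberwise conjugacy there. Write $\phi=(\EE_t,f_t)_{t\in[0,1]}$ and $\psi=(\FF_t,g_t)_{t\in[0,1]}$, and fix continuous families of homeomorphisms $h_t\colon\EE_0\to\EE_t$ and $\tilde h_t\colon\FF_0\to\FF_t$ fibering over $\Id_X$, as in \cref{e.ht}, with $h_0=\Id$ and $\tilde h_0=\Id$. By definition $\rho(\cdot,\phi)=\rho(\cdot,\phi_0)$ and $\rho(\cdot,\psi)=\rho(\cdot,\psi_0)$ for the paths $\phi_0=\bigl(h_t^{-1}\circ f_t\circ h_t\bigr)_t$ in $Sk^+_{\EE_0,T}$ and $\psi_0=\bigl(\tilde h_t^{-1}\circ g_t\circ \tilde h_t\bigr)_t$ in $Sk^+_{\FF_0,T}$. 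Both paths start at $f_0$ after the transport described next, which is what eventually lets us invoke cohomology invariance.

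First I would use the cohomology hypothesis at $t=0$: there is a homeomorphism $k_0\colon\FF_0\to\EE_0$ fibering over $\Id_X$ with $f_0\circ k_0=k_0\circ g_0$. Transport $\psi_0$ by it, setting $\hat\psi_0:=\bigl(k_0\circ\psi_{0,t}\circ k_0^{-1}\bigr)_t$, now a genuine path in $Sk^+_{\EE_0,T}$ with $\hat\psi_{0,0}=f_0$. Since $k_0$ is a \emph{fixed} fiberwise homeomorphism, it distorts winding numbers of paths in the fibers by an amount bounded uniformly in the base point (by compactness of $X$) and independently of the number of iterates, so $\rho(\cdot,\hat\psi_0)=\rho(\cdot,\psi_0)$. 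Next I would use the cohomology hypothesis for every $t$: pick a homeomorphism $k_t\colon\FF_t\to\EE_t$ fibering over $\Id_X$ with $f_t\circ k_t=k_t\circ g_t$, and set $m_t:=h_t^{-1}\circ k_t\circ\tilde h_t\circ k_0^{-1}$, a homeomorphism of $\EE_0$ fibering over $\Id_X$, i.e. an element of $Sk^+_{\EE_0,\Id_X}$. A one-line check using $f_t\circ k_t=k_t\circ g_t$ and the definitions of $\phi_{0,t}$ and $\hat\psi_{0,t}$ shows that both $\phi_{0,t}\circ m_t$ and $m_t\circ\hat\psi_{0,t}$ equal $h_t^{-1}\circ k_t\circ g_t\circ\tilde h_t\circ k_0^{-1}$, so $[\phi_{0,t}]=[\hat\psi_{0,t}]$ for every $t$. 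Thus $\phi_0$ and $\hat\psi_0$ are two paths in $Sk^+_{\EE_0,T}$ that coincide in cohomology, and \cref{p.conjugacyinvariant} gives $\rho(x,\phi_0)=\rho(x,\hat\psi_0)$ at every $x$ where either side is defined. Chaining the three equalities yields $\rho(x,\phi)=\rho(x,\psi)$ $\mu$-a.e., and integrating against $\mu$ (using \cref{p.rotationmorphismvarying2}) finishes the proof.

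The step I expect to be the main obstacle is exactly the one that already forced the transfinite argument in \cref{p.conjugacyinvariant}: the cohomologies $k_t$, and therefore the conjugacies $m_t$, cannot in general be chosen to depend continuously on $t$, so the elementary case (the Claim preceding \cref{p.conjugacyinvariant}) does not apply and one must use the full strength of \cref{p.conjugacyinvariant}. By contrast, reducing to the fixed-bundle setting via the straightening maps, transporting $\psi_0$ by the constant homeomorphism $k_0$, and verifying the identity $\phi_{0,t}\circ m_t=m_t\circ\hat\psi_{0,t}$ are all routine.
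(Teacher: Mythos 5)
Your proof is correct and is precisely the "obvious extension" the paper has in mind: the paper omits the proof of \cref{p.conjugacyinvariant2} entirely, and your reduction — straighten both paths into fixed bundles via the maps of \cref{e.ht}, transport by the fixed conjugacy $k_0$ (which only distorts winding numbers by a uniformly bounded amount), verify that the straightened paths coincide in cohomology, and invoke the full strength of \cref{p.conjugacyinvariant} because the $k_t$ need not vary continuously — is exactly the intended argument. The algebraic identity $\phi_{0,t}\circ m_t=m_t\circ\hat\psi_{0,t}$ checks out, so nothing is missing.
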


We say that a path $(\EE_t,f_t)$ in $Sk_{\cE,T}^+$ is {\em strictly increasing} if it is cohomologous to a strictly increasing path (in our previous definition -- see \cref{s.rotmorphism.modelock}) in $Sk^+_{\EE_0,T}$. \cref{l.modelocking.increasing} translates into:

\begin{lemma}\label{r.strictincreasing.varying.case}
A skew-product $(\EE,f)\in Sk_{\cE,T}^+$ is not upper semi-locked over $\mu$ if and only if the strictly increasing paths starting at $(\EE,f)$ in $Sk_{\cE,T}^+$ have strictly positive rotation number over $\mu$.
\end{lemma}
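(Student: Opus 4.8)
The plan is to reduce the statement to its fixed-bundle counterpart \cref{l.modelocking.increasing}, the bridge being the conjugations of \cref{e.ht} together with cohomology invariance (\cref{p.conjugacyinvariant,p.conjugacyinvariant2}). The converse implication should be the easy one: $(\EE,f)$ admits arbitrarily small strictly increasing paths, namely $t\mapsto(\EE,R_{t\delta}\circ f)$, where $R_\theta\colon\EE\to\EE$ is a continuous family of positive arc-length rotations of the fibers (available since $\EE$ is a continuous oriented circle bundle); these lie in $Sk^+_{\EE,T}\subset Sk^+_{\cE,T}$, are literally strictly increasing, and stay within $\delta$ of $(\EE,f)$. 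If all strictly increasing paths from $(\EE,f)$ have positive rotation number over $\mu$, then for each small $\delta$ this exhibits a skew-product $\delta$-close to $(\EE,f)$ with positive relative rotation number, so $(\EE,f)$ is not upper semi-locked over $\mu$.

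For the direct implication the crux is the equivalence: $(\EE,f)$ is upper semi-locked over $\mu$ in $Sk^+_{\cE,T}$ if and only if $f$ is upper semi-locked over $\mu$ in $Sk^+_{\EE,T}$. One direction is immediate from $Sk^+_{\EE,T}\subset Sk^+_{\cE,T}$. For the other, I would retract a skew-product $(\FF,g)$ close to $(\EE,f)$ to $r(\FF,g):=h_{\EE,\FF}^{-1}\circ g\circ h_{\EE,\FF}\in Sk^+_{\EE,T}$ (well defined once $d(\EE,\FF)<\alpha$), check with hypothesis $(ii)$ and uniform continuity of conjugation that $r(\FF,g)$ stays close to $f$, and run the same retraction along a short path $\psi$ from $(\EE,f)$ to $(\FF,g)$: this produces a path $\psi_0$ in $Sk^+_{\EE,T}$ from $f$ to $r(\FF,g)$, cohomologous to $\psi$ and so with $\rho(\mu,\psi)=\rho(\mu,\psi_0)$ by \cref{p.conjugacyinvariant2}, and confined to a simply connected neighborhood of $f$ so that $\rho(\mu,\psi_0)=\rho_f(\mu,r(\FF,g))$. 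Since $r$ post-composed with the inclusion is the identity, this identifies the two relative rotation numbers near $(\EE,f)$, hence the equivalence of upper semi-locking.

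Granting this, I would conclude as follows. Let $\phi=(\EE_t,f_t)$ be a strictly increasing path of $Sk^+_{\cE,T}$ starting at $(\EE,f)$. By definition $\phi$ is cohomologous to a strictly increasing path $\phi'$ of $Sk^+_{\EE,T}$; conjugating $\phi'$ by its initial cohomology $m_0\in Sk^+_{\EE,\Id_X}$, which is orientation-preserving on the fibers and hence preserves strict monotonicity, yields a genuine strictly increasing path $\phi''$ of $Sk^+_{\EE,T}$ starting at $f$, with $\rho(\mu,\phi)=\rho(\mu,\phi')=\rho(\mu,\phi'')$ by \cref{p.conjugacyinvariant,p.conjugacyinvariant2}. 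If $(\EE,f)$ is not upper semi-locked over $\mu$, then by the previous paragraph $f$ is not upper semi-locked over $\mu$ in $Sk^+_{\EE,T}$, so \cref{l.modelocking.increasing} gives $\rho(\mu,\phi'')>0$, hence $\rho(\mu,\phi)>0$, which is what we want.

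The main obstacle is the bookkeeping in the second paragraph: one must verify that pushing a $C^0$-small path of $Sk^+_{\cE,T}$ into the fixed bundle $\EE$ by the projections $h_{\EE,\FF_t}$ produces a path of $Sk^+_{\EE,T}$ that still remains inside a simply connected neighborhood of $f$, so that its rotation number may legitimately be identified with $\rho_f$. The metric equivalence of hypothesis $(ii)$ and the compactness of $\EE$ (which gives uniform continuity of $g\mapsto h^{-1}\circ g\circ h$ near $h=\Id_\EE$) are exactly what make this precise; everything else is a routine transcription of the proof of \cref{l.modelocking.increasing} combined with \cref{p.conjugacyinvariant2}.
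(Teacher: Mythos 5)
Your argument is correct and is exactly the reduction the paper has in mind: the paper offers no proof, merely asserting that \cref{l.modelocking.increasing} ``translates'' to the varying-bundle setting, and your retraction via $h_{\EE,\FF}$ plus the cohomology invariance of \cref{p.conjugacyinvariant2} is the right way to make that translation precise. The only point worth stating explicitly in a write-up is the one you already flag implicitly, namely that conjugating by the initial cohomology $m_0\in Sk^+_{\EE,\Id_X}$ preserves strict monotonicity because $m_0$ is orientation-preserving on fibers.
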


\subsection{Perturbations of linear cocycles}

Let $V$ be a vector bundle with compact base $X$ and let  $T\colon X\to X$ be a homeomorphism. Fix a  measure $\mu$ in the set $\cM(T)$ of Borel finite $T$-invariant measures. Let $\cA\in LC_{V,T}$ be a linear cocycle that preserves orientation on a  $2$-dimensional oriented bundle $E_\cA$ of a dominated splitting. 
That dominated splitting admits a continuation (see for instance ~\cite{BDV}), in particular there is a connected neighborhood $\cU$ of $\cA$ on which the bundle $E_\cA$ extends to a unique continuous family of bundles $$E=(E_\cB)_{\cB\in \cU}$$ 
where each $\cB\in \cU$ preserves $E_\cB$ and its orientation (defined by continuation of the orientation on $E_\cA$).

 Then going back to \cref{e.vectorbundlesexample} with the projective bundle $\GG=\PP V=\sqcup_{x\in X}\PP V_x$ and applying \cref{p.rotationmorphismvarying2} to paths of the form $\phi=(\PP E_{\cA_t},\PP\cA_{t|E_{\cA_t}})$, we get that any path $(\cA_t)$ in $\cU$ has a well-defined rotation number along $E$:
$$\rho_E\bigl[\mu,(\cA_t)\bigr]:=\rho(\mu,\phi).$$
As before, this rotation number varies continuously with the measure $\mu$  and with the path $(\cA_t)$, it depends linearly on the measure $\mu$ and it is additive for concatenation.

We assume now that $\cU$ is simply connected.\footnote{Such $\cU$ exists thanks to the local simple connectedness of the set  $LC_{V,T}$ of linear cocycles.} 
Given a cocycle $\cB\in \cU$, define its {\em rotation number relative to $\cA$ along $E$} by taking any path $(\cA_t)$ in $\cU$ from $\cA$ to $\cB$ and putting
$$\rho_{\cA,E}(\mu,\cB):=\rho_E\bigl[\mu,(\cA_t)\bigr]$$
This is well-defined by the invariance by isotopy provided by \cref{p.rotationmorphismvarying2}. Moreover it is  continuous on $\cM(T)\times \cU$ and it satisfies the following: for all $\cB,\cC\in \cU$ we have
$$\rho_{\cA,E}(\mu,\cC)=\rho_{\cA,E}(\mu,\cB)+\rho_{\cB,E}(\mu,\cC).$$

\begin{remark}
As in \cref{r.germsk} this relative rotation number depends on the choice of $\cU$, but the germ of $\cB\mapsto \rho_{\cA,E}(\cdot,\cB)$ at $\cB=\cA$ does not.
\end{remark}

In the particular case of a periodic measure, the rotation number easily relates to the variation of the arguments of the eigenvalues of the first return map. Let $x\in X$ be a $T$-periodic point of period $p$. Then the first return map of the cocycle $\cB$ to the fiber ${E_{\cB,x}}$ above $x$ is 
\begin{itemize}
\item either conjugate through an orientation preserving linear map to a similarity of angle $\theta_\cB$,
\item or it has positive real eigenvalues, in which case we put $\theta_\cB=0$,
\item or it has negative real eigenvalues, in which case we put $\theta_\cB=\pi$.
\end{itemize}
We leave the following as an exercise:

\begin{scholium}\label{r.realeig}
Let $\mu_x$ be a $T$-invariant measure supported by the orbit of $x$. Then we have the following equality:
 $$\mu_x(X)\cdot (\theta_\cB-\theta_\cA)=2\pi p\cdot\rho_{\cA,E}(\mu_x,\cB) \mod 2\pi.$$ 
\end{scholium}

\begin{lemma}[The relative rotation number is a local cohomology invariant]\label{r.locconj}Namely, there is a simply connected neighborhood $\cU$ of $\cA$ and $\epsilon>0$ such that if $\cB,\cC\in \cU$ satisfy that $\PP\cB_{|E_\cB}$ and $\PP\cC_{|E_\cC}$ are   conjugate by a homeomorphism 
\[h_1\colon  \PP E_\cB\to \PP E_\cC\]
fibering over $\Id_X$ and $\epsilon$-close to $\Id_{\PP E_\cB}$, then their relative rotation numbers coincide:
$$\rho_{\cA,E}(\mu,\cB)=\rho_{\cA,E}(\mu,\cC).$$
\end{lemma}

\begin{proof}Choose a path $E_t$ of bundles between $E_0=E_\cB$ and $E_1=E_\cC$ and an $\epsilon$-small path of homeomorphisms $h_t\colon \PP E_\cB\to \PP E_t$ fibering over  $\Id_X$ going from $h_0=\Id_{\PP E_\cB}$ to $h_1$. The path $\psi=(\PP E_t,h_t\circ\PP\cB_{|E_{\cB}}\circ h_t^{-1})$ from $(\PP E_\cB,\PP\cB_{|E_\cB})$ to $(\PP E_\cC,\PP\cC_{|E_\cC})$ is cohomologically constant. Then let $\cB_t$ and $\cC_t$ be paths in $\cU$ from $\cA$ to $\cB$ and $\cC$ respectively, and define the paths 
$\phi_\cB=(\PP E_{\cB_t},\PP\cA_{t|E_{\cB_t}})$ and $\phi_\cC=(\PP E_{\cC_t},\PP\cA_{t|E_{\cC_t}})$.
Then the paths $\psi*\phi_\cB$ and $\phi_\cC$ are isotopic with fixed extremities in the locally simply connected space $\cE$ defined in  \cref{e.vectorbundlesexample}, provided that $\cU$ and $\epsilon$ are chosen small enough, therefore they have same rotation numbers. By \cref{p.conjugacyinvariant2}, the rotation number of $\psi$ is $0$, which ends the proof.
\end{proof}

\subsection{Perturbations of diffeomorphisms}
Let $f$ be a diffeomorphism on a compact manifold $M$ and let $K$ be a hyperbolic set of $f$. Fix a measure $\mu$ in the set $\cM(f_{|K})$ of finite Borel $f$-invariant measures supported in $K$. Assume moreover that $Df$ preserves orientation on a $2$-dimensional oriented bundle $E_f$ of a dominated splitting over $K$. 
Again, the hyperbolic set and the dominated splitting admit a continuation, in particular there is a connected $C^1$-neighborhood $\cU$ of $f$ such that
\begin{itemize}
\item there is a unique continuous family compact sets $(K_g)_{g\in \cU}$ and a unique continuous family of homeomorphisms $h_g\colon K_g\to K$
such that $h_f=\Id_K$ and each restriction $g_{|K_g}$ is topologically conjugate to $f_{|K}$ by $h_g$,
\item the bundle $E_f$ extends uniquely to a continuous family $E={\bigl(E_g\bigr)}_{g\in \cU}$
such that each $E_g$ is a $Dg$-invariant $2$-dimensional subbundle of $TM_{|K_g}$.
\end{itemize}
This gives then circle bundles $\PP E_g\overset{\pi}{\longrightarrow}K_g\overset{h_g}{\longrightarrow}K$ over $K$, which puts ourselves in the setting of \cref{e.diffeosexample} with $X=K$. 
Applying \cref{p.rotationmorphismvarying2} to paths of the form $\phi=(\PP E_{g_t},\PP Dg_{|E_{g_t}})$, we get that any path $(g_t)$ in $\cU$ has well-defined rotation numbers along $E$:
$$\rho_E\bigl[\mu,(g_t)\bigr]:=\rho(\mu,\phi).$$
When $\cU$ is simply connected\footnote{Such $\cU$ exists thanks to the local simple connectedness of the set of diffeomorphisms.},
define the {\em rotation number of $g\in \cU$ relative to $f$ along $E$ over $\mu$} by
$$\rho_{f,E}(\mu,g):=\rho_E\bigl[\mu,(g_t)\bigr]$$
where $(g_t)$ is any path in $\cU$ from $f$ to $g$. Again, that rotation number is continuous on $\cM(T)\times \cU$, where $\cU$ is endowed with the $C^1$-topology, and it satisfies the following rule:
for all $g,g'\in \cU$ we have
$$\rho_{f,E}(\mu,g')=\rho_{f,E}(\mu,g)+\rho_{g,E}(\mu,g').$$

\begin{remark}
Again, the germ of the relative rotation number map $g\mapsto \rho_{f,E}(\cdot,g)$ at $f=g$ does not depend on the choice of $\cU$.
\end{remark}

\section{Proofs of the main results.}\label{s.mainth}

\subsection{Proof of  \cref{t.cocyclewithsimpleLyap}}\label{s.theoremA}
For the sake of clarity, we first prove it when the map $T$ in the statement of  \cref{t.cocyclewithsimpleLyap} is itself a transitive SFT.
This case already allows to present the essential argument in a non-technical way.
Then we explain how to adapt it to the general case in the two later sections.
            
\subsubsection{Proof of  \cref{t.cocyclewithsimpleLyap} when $T$ is a transitive SFT}\label{s.ofiwheofa}
Given $x\in X$, we denote by $\Orb(x)=\{T^nx,n\in \ZZ\}$ its orbit.
Then $x$ be a $T$-periodic point.

The arguments other than the rotation number are fairly classical, we will only expose them concisely.
 A matrix in $\GL(d,\RR)$ can be perturbed so that it has {\em generic spectrum}, that is, it has at most two eigenvalues (counted with multiplicity) of each modulus: if it has only one eigenvalue of some modulus then it is real, and if two eigenvalues have the same modulus then they are non-real (and conjugate).  
Smoothly perturb the cocycle so that the first return at $x$ has generic spectrum. Assume it has $\ell\geq 1$ pairs of non-real eigenvalues. Then it is enough to prove the following:

\begin{claim}\label{c.wpeifhw}
There is a smooth perturbation of the cocycle $\cA$ and a new periodic point with generic spectrum and at most $\ell-1$ pairs of non-real eigenvalues. 
\end{claim}

\begin{proof}[Proof of \cref{c.wpeifhw}]
The set $X$ is infinite, so $\Orb(x)$ admits a homoclinic point, that is, a point $y\notin \Orb(x)$ whose $\alpha$- and $\omega$-limits are $\Orb(x)$.

Let us show that the cocycle can be smoothly perturbed so that the finest dominated splitting on $\Orb(x)$ extends to the compact set 
$$K=\Orb(x)\cup \Orb(y).$$
If the cocycle restricted to $\Orb(x)$ has a dominated splitting $E\oplus F$ of index $i$, then there are unique $i$- and $(d-i)$-planes $E_{y}$ and $F_{y}$ in $V_y$ such that on has the positive iterates $\cA^k(E_y)$ accumulate on $E$ and the negative iterates  $\cA^{-k}(E_y)$ accumulate on $F$.

 Since $y$ is an isolated point of $K$, one may obtain any small perturbation of the linear map $A(y)$ by a smooth perturbation of the linear cocycle $\cA$, without changing $\cA$ above $K\setminus \{y\}$. Replacing $\cA$ by such a perturbation, one may assume that the bundles $E_{y}$ and $F_{y}$ transverse. Define then $E_{T^ky}:=\cA^k(E_y)$ and $F_{T^ky}:=\cA^k(F_y)$ for all $k$. This extends the dominated splitting $E\oplus F$ on $\Orb(x)$ to $K$. This can be done simultaneously for all indices of domination, so that the finest dominated splitting on $\Orb(x)$ extends to $K$. By assumption on the spectrum of $\cA$ along $\Orb(x)$, that finest dominated splitting has $\ell$ bundles of dimension $2$ and all others of dimension $1$.

The shadowing lemma gives a sequence of positive real numbers $\epsilon_n\to 0$ and a sequence $x_n\in X$ of periodic points of periods $2p_n$ such that 
$$\dist(T^kx_n,T^ky)\leq \epsilon_n,$$
for all integers $-p_n< k\leq p_n$.
A dominated splitting on a compact set extends to any compact invariant set in a small neighborhood of that first set. So, for $N$ large enough, the finest dominated splitting on $K$ extends to the compact set
$$L_N=K\cup \bigcup_{n\geq N} \Orb(x_n).$$ 
The dimensions of the bundles of that splitting imply  that the first return of $\cA$ to each ${x_n}$ has at most $\ell$ pairs of eigenvalues (counted with same multiplicity) with same moduli.
Let $E$ be one of the $2$-dimensional bundles of that finest dominated splitting.
There are two cases:
\begin{itemize}
\item either the cocycle $\cA$ does not preserve any continuous orientation on the restriction ${E}_{K}$ of $E$ to $K$. But the linear cocycle $\cA$ preserves some orientation on $E_{\Orb(x)}$ since the first return linear map has a pair of complex conjugate eigenvalues, by assumption. Therefore, for large $n$, the linear cocycle $\cA$ does not preserve any orientation on the restriction of $E$ to the orbit of $x_n$ and the first return to $E_{x_n}$ has real eigenvalues.
\item or the cocycle $\cA$ preserves a continuous orientation on $E_{K}$. Then if we take the integer $N$ above large enough, $\cA$ also preserves a continuous orientation on $E:=E_{L_N}$. We apply a rotation number argument. 

Denote by  $\mu_{x}$ and $\mu_{x_n}$ the invariant probability measures supported by the orbits of $x$ and $x_n$. The sequence $\mu_{x_n}$ converges to $\mu_x$ for the weak-* topology.
Take now a small simply connected neighborhood $\cU$ of $\cA$. By \cref{r.realeig}, changing the argument of the complex eigenvalues in $E$ at $x$ gives a small path $\bigl(\cA_t\bigr)_{t\in [0,1]}$ in $\cU$ starting at $\cA_0:=\cA$ whose rotation number $\rho_{\cA,E}(\mu_x,\cA_1)$ is positive. 
The periods $p_n$ of the points $x_n$ tend to infinity and $\rho_{\cA,E}(\mu_{x_n},\cA_1)\to \rho_{\cA,E}(\mu_x,\cA_1)$ by continuity of the rotation number,
therefore  $\pi<p_n\cdot \rho_{\cA,E}(\mu_{x_n},\cA_1)$ for large $n$.
The continuity of $t\mapsto \rho_{\cA,E}(\mu_{x_n},\cA_t)$ and \cref{r.realeig} give a parameter $t$ such that the  first return to  the continuation of $E_{x_n}$ has real eigenvalues. 
\end{itemize}
In both case, a further smooth perturbation gives generic spectrum at $x_n$ with at most $\ell-1$ pairs of non-real eigenvalues, which ends the proof of \cref{c.wpeifhw}.
\end{proof}
This ends the proof of \cref{t.cocyclewithsimpleLyap} for transitive SFTs. Before we proceed to the general case, we  need a few considerations on lifting vector bundles and linear cocycles.

\subsubsection{Lifts of vector bundles and linear cocycles}\label{s.oweifw}  Assume that a continuous transformation $T\colon X\to X$ is a factor of another $\sigma \colon \Omega\to \Omega$, that is, the following diagram commutes:
\begin{align}\xymatrix{\Omega\ar[rr]^\sigma \ar[d]^\pi && \Omega\ar[d]^\pi \\
            X\ar[rr]^{T} && X}\label{d.owefiwo}\end{align}
Given a vector bundle $V$ on $X$, define $V_\Omega:=\sqcup_{\omega\in \Omega}\{\omega\}\times V_{\pi(\omega)}$.
For any local trivialization $\phi_{U'}\colon U'\times \RR^d\to V_{U'}$ of the vector bundle above an open set $U'\subset X$, let $U:=\pi^{-1}U'$ and define 
\[\begin{array}{rlcl}
\phi_U\colon& U\times \RR^d&\to& V_U:=\sqcup_{\omega\in U}\{\omega\}\times V_{\pi(\omega)} \\
&(\omega,v)&\mapsto&\Bigl(\omega,\phi_{U'}(\pi(\omega),v)\Bigr)
\end{array}\]
Those maps $\phi_U$  are trivializations of a unique vector bundle structure on $V_\Omega$. We have a canonical surjection $\widetilde{\pi}\colon (\omega,v)\in V_\Omega \to v\in V$. For any continuous linear cocycle $\cA\colon V\to V$, define
\[\begin{array}{rlcl}
\cA_\Omega\colon& V_\Omega&\to& V_\Omega. \\
&(\omega,v)&\mapsto&(\sigma \omega,\cA v)
\end{array}\]
This is a continuous linear cocycle and we have the following commuting diagram:
\[\xymatrix{V_\Omega\ar[rr]^{\cA_\Omega} \ar[d]^{\widetilde\pi} && V_\Omega\ar[d]^{\widetilde\pi} \\
            V\ar[rr]^{\cA} && V}\]
 \begin{remark}\label{r.woefieona}
 The preimage by $\widetilde\pi$ of a dominated splitting $V=E^1\oplus \ldots \oplus E^\ell$ for $\cA$ is a dominated splitting  $V=E^1_\Omega\oplus \ldots \oplus E^\ell_\Omega$ for $\cA_\Omega$. The opposite is not true, in general: a bundle of a dominated splitting for $\cA_\Omega$ does not necessarily project on a vector subbundle of $V$.
 \end{remark}

\subsubsection{Proof of  \cref{t.cocyclewithsimpleLyap} in the general case}\label{s.ofiwheofa2}
Assume that $T\colon X\to X$ is a factor of a transitive SFT $\sigma\colon \Omega\to \Omega$, as in diagram (\ref{d.owefiwo}). By \cref{r.oginoga}, we may assume that $\sigma$ is a two-sided shift. 
Let $x\in X$ be the image by $\pi$ of a $\sigma$-periodic point $\xi$. Then $x$ is itself $T$-periodic. 

Smoothly perturb the cocycle so that the first return at $x$ has generic spectrum. Assume it has $\ell\geq 1$ pairs of non-real eigenvalues. As in \cref{s.ofiwheofa} it is enough to prove the following:

\begin{claim}\label{c.wpeifhw2}
There is a smooth perturbation of the cocycle $\cA$ and a new periodic point with generic spectrum and at most $\ell-1$ pairs of non-real eigenvalues. 
\end{claim}

\begin{proof}[Proof of \cref{c.wpeifhw2}] The set $X$ is infinite so we find $b\in X\setminus \Orb(x)$ and $\beta\in \Omega$ such that $\pi(\beta)=b$.  
Since $\sigma\colon \Omega\to \Omega$ is a transitive SFT, there is a point $\gamma\in \Omega$ arbitrarily close to  $\beta$ whose $\alpha$- and $\omega$-limits are $\Orb(\xi)$. Taking $\gamma$ sufficiently close $\beta$, we have $y_0=\pi\gamma\in  X\setminus \Orb(x)$. 
Then the sequence $y_k=\pi(\sigma^k\gamma)$ satisfies:
\begin{itemize}
\item $Ty_k=y_{k+1}$ for all $k\in \NN$,
\item its $\alpha$- and $\omega$-limits are $\Orb(x)$,
\end{itemize}
In other words, $(y_k)_{k\in \ZZ}$ is a homoclinic orbit for $x$. 
Notice that while $y_k\notin \Orb(x)$ for all $k\leq 0$, one may have $y_k\in \Orb(x)$ for $k$ large enough. 
The finest dominated splitting of the restriction $\cA_{|\Orb(x)}$ has as previously all bundles of dimension $1$ or $2$. The problem here is that $T$ restricted to 
\[K:=\Orb(x)\cup \{y_k,k\in \ZZ\}\]
is not invertible and that prevents us from doing as in \cref{s.ofiwheofa}: in general no perturbation of $\cA$ extends the dominated splitting on  $V_{\Orb(x)}$ to $V_K$.

In order to deal with this, we consider the lift $\cA_\Omega$ of $\cA$ to the bundle $V_\Omega$ as defined in \cref{s.oweifw}. The base transformation $\sigma$ is now invertible.  We work close to the set 
\[\widehat{K}=\Orb(\xi)\cup \Orb(\gamma).\]
By \cref{r.woefieona}, the finest dominated splitting of the restriction $\cA_{|\Orb(x)}$ lifts by $\widetilde \pi$ to a dominated splitting for $\cA_{\Omega|\Orb(\xi)}$. That dominated splitting has again $\ell$ bundles of dimension $2$ and the rest of dimension $1$.
Denote now by $A_\Omega(\gamma)\colon {V_\Omega}_\gamma \to {V_\Omega}_{\sigma\gamma}$ the restriction of $\cA_\Omega$ to the fiber of ${V_\Omega}$ above $\gamma$.

Note  that since $y_0\notin \Orb(x)$ and the $\alpha$- and $\omega$-limits of $(y_k)_{k\in \ZZ}$ are $\Orb(x)$, the point $y_0$ is isolated in the set $K$, therefore one may obtain {\bf any} small perturbation of the linear map $A(y)\colon V_y\to V_{Ty}$ by a smooth perturbation of the cocycle $\cA$ without changing it on $K\setminus \{y\}$. One obtains then {\bf any} small perturbation of the linear map $A_\Omega(\gamma)$ by a smooth perturbation of the cocycle  $\cA$ without changing $\cA_\Omega$ on $\widehat{K}\setminus \{\gamma\}$. As in the previous proof, we find then a small perturbation of $\cA$ such that the dominated spitting of $\cA_\Omega$ on $\Orb(\xi)$ extends to $\widehat{K}$.

The shadowing lemma gives a sequence $\xi_n$ of periodic points of periods $2p_n$ and a sequence $\epsilon_n\to 0$ such that 
\begin{align}\dist(\sigma^k\xi_n,\sigma^k\gamma)\leq \epsilon_n,\label{e.oarea}
\end{align}
for all integers $-p_n< k\leq p_n$. For $N$ large enough, the finest dominated splitting on $\widehat K$ extends to the compact set
$$\widehat L_N=\widehat K\cup \bigcup_{n\geq N} \Orb(\xi_n).$$ 
Let $x_n=\pi(\xi_n)$.

\begin{remark}\label{r.aeronewe}
We have $y_0=\pi(\gamma)$ isolated in $\{y_k=\pi(\sigma^k\gamma),k\in \ZZ\}$. By \cref{e.oarea}, for all $n$ large enough, for all nonzero $-p_n< k\leq p_n$ we have $T^kx_n\neq x_n$. Thus $x_n$ has same period $2p_n$ as $\xi_n$ and $\pi$ sends the orbit of $\xi_n$ bijectively on the orbit of $x_n$.
 
In particular, the first return of $\cA$ to $x_n$ has the same spectrum as the first return of $\cA_\Omega$ to $\xi_n$, and if moreover $n\geq N$, the dimension of the dominated splitting above means that there are at most $\ell$ pairs of complex conjugate eigenvalues.
\end{remark}

Let $ E$ be one of the bundles of dimension $2$ of that finest dominated splitting.
We have then the same two cases as in \cref{s.ofiwheofa}:
\begin{itemize}
\item either the cocycle $\cA_{\Omega}$ does not preserve any continuous orientation on the restriction ${E}_{\widehat K}$. But  the  cocycle $\cA$ preserves some orientation on $E_{\Orb(x)}:=\widetilde\pi E_{\Orb(\xi)}$, thus so does the lift $\cA_{\Omega}$  on $E_{\Orb(\xi)}$. Therefore, for large $n$, the linear cocycle $\cA_\Omega$ does not preserve any orientation on the restriction of $E$ to the orbit of $\xi_n$, that is the first return of $\cA_\Omega$ to $E_{\xi_n}$ has real eigenvalues.

%
\item or the cocycle $\cA_\Omega$ preserves a continuous orientation on $E_{\widehat K}$. If we take the integer $N$ above large enough, $\cA_\Omega$ also preserves a continuous orientation on $E:=E_{\widehat L_N}$. One applies the rotation number argument. 

Denote by  $\mu_{\xi}$ and $\mu_{\xi_n}$ the invariant probability measures supported by the orbits of $\xi$ and $\xi_n$. The sequence $\mu_{\xi_n}$ converges to $\mu_\xi$ for the weak-* topology.
Take now a small smooth path $\bigl(\cA_t\bigr)_{t\in [0,1]}$ starting at $\cA_0:=\cA$ such that the arguments of the eigenvalues of the first return of $\cA_t$ to the continuation of $E_x$ change with $t$. Then the arguments of the eigenvalues of the first return of $\cA_{t,\Omega}$ to the continuation of $E_{\xi}$ change with $t$.
Then as in the proof of \cref{c.wpeifhw} we find  a parameter $t$ and $n\in \NN$ such that the first return of $\cA_{t,\Omega}$  has real eigenvalues by restriction to the continuation of $E_{\xi_n}$. 
\end{itemize}
Since there are only $\ell$ bundles of dimension $2$ in the finest dominated splitting along $\xi_n$, we got in both case a perturbation $\cB$ of $\cA$ such that  the first return  of  $\cB_\Omega$ to $\xi_n$ has at most $\ell-1$ pairs of complex conjugate eingenvalues. By \cref{r.aeronewe}, so does the first return of $\cA$ to $x_n$. One concludes by a last generic perturbation that makes the spectrum generic at $x_n$ with at most $\ell-1$ pairs of non-real eigenvalues. 
\end{proof}

This ends the proof of \cref{t.cocyclewithsimpleLyap}.

\subsection{Mode-locking and dominated splittings}\label{a.p.dominationmodelocking}
We  prove  \cref{p.dominationmodelocking}. One could easily adapt the arguments of \cite{ABD} to our setting. We propose other arguments.

We may assume that $V=E$ is an oriented $2$-dimensional vector bundle and that for each $x\in X$ the linear operator $A(x)$ has norm $1$: dominated splittings and mode-locking are preserved by multiplication of a linear cocycle $\cA$ by a continuous scalar function $X\to ]0,+\infty[$.

Consider the induced action $\PP \cA$ on the projective bundle $\EE:=\PP E$, where each fiber is endowed with the distance sine of the angle. A {\em fibered $\epsilon$-chain} is a sequence  $u_0,\ldots, u_n\in \EE$ such that for each $i$, $\PP\cA u_i$ and $u_{i+1}$ are at distance $<\epsilon$ and in the same fiber of $\EE$.  

\begin{lemma}\label{l.nondom}
For all $\epsilon>0$, there exists $\delta>0$ and $N>0$ such that if $n>N$ and 
$$\frac{1}{n}\log\|\cA^n(x)\|<\delta,$$
then for each $u\in\PP V_x$ and each $v\in\PP V_{T^nx}$ there is a fibered $\epsilon$-chain of $\PP \cA$ from $u$ to $v$.
\end{lemma}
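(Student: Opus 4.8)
The plan is to exploit the near-conformality of the cocycle when $\frac1n\log\|\cA^n(x)\|$ is small. Since $\|A(x)\|=1$ for every $x$, the singular values of $\cA^n(x)$ are $e^{s_n(x)}\geq e^{-s_n(x)}$ with $0\leq s_n(x)\leq ?$... more precisely, writing $\sigma_1\geq \sigma_2>0$ for the singular values of $\cA^n(x)$, we have $\sigma_1\leq 1$ (submultiplicativity of the norm since each $A$ has norm $1$), hence $\sigma_1\sigma_2=|\det \cA^n(x)|$ and $\sigma_1/\sigma_2 = \sigma_1^2/|\det|$. The hypothesis $\frac1n\log\sigma_1<\delta$ together with a uniform lower bound $|\det A(x)|\geq c>0$ (which holds by compactness, since $A$ takes values in $\GL$) gives $\log(\sigma_1/\sigma_2)\leq 2\log\sigma_1 - \log|\det\cA^n(x)| \leq 2n\delta + n\log(1/c)$. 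That is the wrong direction; instead I should bound $\sigma_1/\sigma_2$ from above. Actually the right estimate: $\log \sigma_2 = \log|\det \cA^n(x)| - \log\sigma_1 \geq -n\log(1/c) - n\delta$ is a lower bound on $\sigma_2$, while $\log\sigma_1 < n\delta$. So $\log(\sigma_1/\sigma_2) < n\delta + n\log(1/c) + n\delta$, which is again linear in $n$ and useless. The point I am missing is that the hypothesis must really be read as: $\|\cA^n(x)\|$ is small, i.e. $\log\sigma_1$ is very negative (not just sub-linear), so $\sigma_1$ itself is tiny; combined with $\sigma_1\geq \sigma_2$ this forces $\sigma_2$ tiny too and the matrix is close to $0$ — but that cannot give conformality either. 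The correct interpretation, and the one I will use, is that $\frac1n\log\|\cA^n(x)\|<\delta$ with $\delta$ small forces $\cA^n(x)$ to be \emph{nearly an isometry up to scaling} only if we also control $\|(\cA^n(x))^{-1}\|$; and indeed the \emph{bounded distortion / sub-additivity} applied symmetrically gives $\frac1n\log\|(\cA^n(x))^{-1}\|$ bounded, so $\log(\sigma_1/\sigma_2)=\log\sigma_1+\log\|(\cA^n)^{-1}\|$ is $<n\delta + n\cdot(\text{const})$ — still linear. Hence the genuine content must be that one should \emph{not} expect conformality but rather use the small norm directly.

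So here is the approach I would actually take. Fix $\epsilon>0$. The projective map $\PP A(x)\colon \PP V_x\to \PP V_{Tx}$ is a circle homeomorphism; its "contraction rate" toward the most-expanded direction is governed by $\sigma_2(A(x))/\sigma_1(A(x))$. If $\frac1n\log\|\cA^n(x)\|<\delta$ and $\delta$ is small, I claim that along the orbit segment $x, Tx,\ldots, T^nx$ the products cannot have developed much projective contraction, because a uniform projective contraction over the whole segment would, by the standard cone-field/domination criterion, produce $\|\cA^n(x)\|$ growing exponentially — contradiction with smallness. Quantitatively: there is $\delta_0>0$ and $N_0$ such that if $n>N_0$ and the cocycle restricted to $\{x,\ldots,T^{n-1}x\}$ admits a $\delta_0$-dominated splitting (in the sense of a cone contracted at a definite rate) then $\frac1n\log\|\cA^n(x)\|\geq \delta_0$. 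Contrapositively, smallness of $\frac1n\log\|\cA^n(x)\|$ prevents uniform projective hyperbolicity along the segment. Then a pseudo-orbit-shadowing / connecting argument for the projective action turns the absence of uniform contraction into the existence of a fibered $\epsilon$-chain: roughly, since no direction is uniformly attracting, starting from any $u\in\PP V_x$ one can at each step make an $\epsilon$-sized jump that keeps the future image "spread out", eventually landing $\epsilon$-close to the prescribed $v$ at $T^nx$.

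Concretely the key steps, in order, are: (1) Reduce to controlling the conformal distortion $\kappa_n(x):=\log(\sigma_1(\cA^n(x))/\sigma_2(\cA^n(x)))$; show by a Pliss-type / sub-additivity argument that if $\frac1n\log\|\cA^n(x)\|<\delta$ and, \emph{for contradiction}, $\kappa_n(x)\geq Cn$ for a fixed $C=C(\epsilon)$, then one extracts an orbit segment with a uniformly contracted cone, hence with $\frac1n\log\|\cA^n\|$ bounded below — here is where I would invoke the characterization of domination via cones (cf.\ the adapted-metric results of~\cite{G1} and~\cite{BG1} cited in the paper). So $\kappa_n(x)=o(n)$, and by choosing $\delta$ small and $N$ large we get $\kappa_n(x)<$ (anything we want times $n$), in particular the "total projective contraction budget" over the segment is controlled. (2) Translate the bounded projective-contraction budget into a connecting lemma on $\EE$: build the fibered $\epsilon$-chain inductively from $u_0=u$, choosing at step $i$ a point $u_{i+1}$ in the fiber over $T^{i+1}x$ at distance $<\epsilon$ from $\PP\cA u_i$ and positioned so that the remaining composition $\PP\cA^{n-i-1}$ still maps a definite-size arc around $u_{i+1}$ onto a definite-size arc — possible precisely because the contraction budget is small — and finish by a last $\epsilon$-adjustment to hit $v$. (3) Verify uniformity of $\delta,N$ in $x$ by compactness of $X$ and continuity of $\cA$.

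The main obstacle I expect is step (1): cleanly extracting, from the failure of the smallness hypothesis under a lower bound on $\kappa_n$, an orbit segment along which the cocycle is genuinely (uniformly) projectively hyperbolic, and converting that into exponential growth of $\|\cA^n\|$. This is a quantitative, non-asymptotic version of "domination $\Leftrightarrow$ exponential separation of singular values", and getting the constants to line up — $\delta$ depending only on $\epsilon$, $N$ uniform in $x$ — will require care; a Pliss lemma to locate long sub-segments with persistent cone-contraction is the natural tool. Step (2) is then a fairly standard soft connecting argument once the distortion bound is in hand, and step (3) is routine compactness.
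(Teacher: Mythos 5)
There is a genuine gap, and it sits exactly where your proposal waves its hands. First, a point of normalization that derailed your opening paragraph: the lemma is meant to be read after conjugating each fiber map into $G=\SL(2,\RR)$ (as the paper does at the start of its proof), so that $\|M\|=\|M^{-1}\|$ and the singular values of $\cA^n(x)$ are $\|\cA^n(x)\|^{\pm 1}$. The hypothesis then says directly that $\log\bigl(\sigma_1/\sigma_2\bigr)=2\log\|\cA^n(x)\|<2n\delta$; your step (1), with its Pliss/cone-field detour, is proving something that is immediate (and the detour as stated runs the implication in a muddled direction). The normalization $\|A(x)\|=1$ used earlier for the mode-locking reduction makes the hypothesis look vacuous, which is why you kept going in circles there.

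The real problem is step (2). The hypothesis controls only the norm of the \emph{full} product $\cA^n(x)$; it says nothing about the intermediate partial products $\cA^k(x)$, $0<k<n$, which can be arbitrarily strongly pinched (e.g.\ a long block of $\mathrm{diag}(e^{K},e^{-K})$ followed by a long block undoing it leaves $\|\cA^n(x)\|=1$). Consequently there is no "contraction budget" distributed over the steps, and your inductive choice of $u_{i+1}$ "so that the remaining composition still maps a definite-size arc onto a definite-size arc" has no justification: the remaining composition $\cA^{n-i-1}(T^{i+1}x)$ may be enormous or enormously pinched regardless of $\delta$. The paper's proof supplies precisely the missing device: using \cref{c.singvalues} (redistribution of singular values along a product of matrices, at cost $d(\sigma,\tau)/n$ per factor, which is small because the total singular-value gap is only $O(n\delta)$), it perturbs the sequence $A_1,\dots,A_n$ so that the partial-product norm rises to a prescribed large value $a$ at an intermediate time $m$ and returns to $1$ at time $n$. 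The pinch at time $m$ funnels every direction $\epsilon$-close to the major axis $w$, and the second half (which then has norm $a$ as well) expands a neighborhood of $w$ over every target $v$; since the perturbed maps act on the projective line $\epsilon/3$-close to the originals, following their orbit yields the fibered $\epsilon$-chain. Your proposal contains no substitute for this controlled reshaping of the intermediate norm profile, and without it the "standard soft connecting argument" does not go through.
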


The following is left to the reader:

\begin{lemma}\label{l.homogeneous.space}
Let $G$ be a topological group endowed with a left invariant metric. Let a sequence $g_1,\ldots, g_n$ in $G$. Let $\epsilon>0$ and fix a point $h$ in the $n\epsilon$-ball centered at $g=g_n\ldots g_1$.  
Then there exists a sequence $h_1,\ldots,h_n$ in $G$ such that each $h_i$ is at distance $\leq \epsilon$ from $g_i$ and such that  
$h=h_n \ldots h_1$.
\end{lemma}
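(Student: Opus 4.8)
\emph{Plan.} Some hypothesis beyond ``topological group with a left-invariant metric'' is needed here --- for instance the statement is false for a nontrivial finite group with the discrete metric --- and the natural one, satisfied in all the settings where the lemma is used, is that $G$ is connected and $d$ is a left-invariant \emph{length} metric, e.g.\ a connected Lie group with a left-invariant Riemannian distance. I will prove it under that assumption. The only facts about left-invariance I use are $d(ax,ay)=d(x,y)$ and, as consequences, $d(x^{-1},e)=d(x,e)$ and $d(x^{-1}y,e)=d(y,x)$.

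The idea is to interpolate between the factorizations and to share the interpolation \emph{equally} among the $n$ factors. Since $d$ is a length metric there is a path $\gamma\colon[0,1]\to G$ with $\gamma(0)=g$, $\gamma(1)=h$ and length $L\le n\epsilon$ (a minimizing geodesic, or an almost-minimizing path after shrinking $\epsilon$ slightly), parametrized proportionally to arc length. I would then construct a continuous family of factorizations $\gamma(t)=h_n(t)\cdots h_1(t)$ with $h_i(0)=g_i$ in which the velocity of $\gamma$ is split evenly between the $n$ factors, so that over $[0,1]$ each $h_i$ travels a distance at most $L/n\le\epsilon$; the elements $h_i:=h_i(1)$ then answer the lemma. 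In a Lie group this is a linear ODE: with right-logarithmic derivatives $\eta_i=\dot h_i h_i^{-1}$, the Leibniz rule gives $\dot\gamma\,\gamma^{-1}=\sum_i\mathrm{Ad}(h_n\cdots h_{i+1})\eta_i$, and one takes $\eta_i=\tfrac1n\,\mathrm{Ad}(h_n\cdots h_{i+1})^{-1}(\dot\gamma\,\gamma^{-1})$, an ODE in $(h_1,\dots,h_n)$ with continuous right-hand side, hence solvable on all of $[0,1]$. For a bare topological group one instead subdivides $[0,1]$ finely and, on each subinterval, distributes the small increment $\gamma(t_k)^{-1}\gamma(t_{k+1})$ among the $n$ factors using only joint continuity of multiplication; the limiting family is the desired one.

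\emph{Main obstacle.} The step I expect to be the real point is the quantitative one: checking that each factor actually moves by at most $L/n$. Its displacement is governed by $\lVert h_i^{-1}\dot h_i\rVert=\tfrac1n\,\bigl\lVert\mathrm{Ad}\bigl((h_n\cdots h_i)^{-1}\bigr)(\dot\gamma\,\gamma^{-1})\bigr\rVert$, where the adjoint operator is a product of up to $n$ adjoints and could a priori amplify norms. When $d$ is bi-invariant every $\mathrm{Ad}(g)\colon\mathfrak{g}\to\mathfrak{g}$ is an isometry, so each factor moves \emph{exactly} $L/n\le\epsilon$ and the lemma follows immediately (this already handles $G=\SS^1$, and abelian or compact $G$ in general). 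In general one runs the estimate as a bootstrap along $\gamma$: as long as every $h_i(t)$ stays in the fixed compact ball $\overline B(g_i,\epsilon)$, the relevant adjoint operators remain in a controlled family and the travelled distance stays $\le L/n\le\epsilon$, so a first-exit-time argument shows the bound is never reached prematurely and the construction runs to $t=1$. Carrying out this control --- bounding the distortion of the product under small moves of the factors --- is the only non-routine ingredient and is what makes the lemma more than a formality; in the groups of circle maps and $\mathrm{PSL}(2,\RR)$ occurring in the applications it is exactly this estimate one has to supply.
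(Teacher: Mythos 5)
You are right that the statement needs more than ``topological group with a left-invariant metric'' (the paper offers no proof --- the lemma is explicitly left to the reader --- so there is nothing to compare against), and your bi-invariant case is correct and immediate. But the general case of your argument has a genuine gap, and in fact no argument at the generality you propose can close it: the lemma is \emph{false} for connected Lie groups with left-invariant Riemannian (hence length) metrics. Take $G=\mathrm{Aff}^+(\RR)=\{(a,b):a>0\}$ with $(a,b)(a',b')=(aa',ab'+b)$ and the left-invariant metric $(da^2+db^2)/a^2$ (the hyperbolic plane). Let $n=2$, $g_1=(A,0)$ with $A$ large, $g_2=e$, so $g=(A,0)$; let $h=(A,\delta)$ and $\epsilon=\tfrac{1}{2}d(g,h)\approx\delta/(2A)$. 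Any $h_1$ with $d(h_1,g_1)\le\epsilon$ has $|b_1|\le A\sinh\epsilon\approx\delta/2$, any $h_2$ with $d(h_2,e)\le\epsilon$ has $|b_2|\le\sinh\epsilon\approx\delta/(2A)$ and $a_2\le e^{\epsilon}$, so the second coordinate of $h_2h_1=(a_2a_1,\,a_2b_1+b_2)$ is at most about $\tfrac{\delta}{2}(1+1/A)<\delta$: the target $h$ cannot be reached. This is exactly the phenomenon you flag as ``the only non-routine ingredient'': conjugation/right translation distorts a merely left-invariant metric, and your first-exit-time bootstrap cannot close, because confining the $h_i(t)$ to compact balls only bounds the adjoint operators by some constant $C>1$, whence each factor travels at most $CL/n$, not $L/n$ --- and the example shows the loss is real.

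Consequently the lemma cannot be salvaged by adding ``connected $+$ length metric''; one must either assume bi-invariance (your easy case, which however does not cover $\SL(d,\RR)$, $d\ge 2$, since it carries no bi-invariant Riemannian metric) or exploit the specific structure of the metric in the paper's application. For $G=\SL(d,\RR)$ with the left-invariant metric induced by $\langle M,M\rangle=\tr({}^t\!MM)$, right translation and conjugation by $\SO(d,\RR)$ \emph{are} isometries, and the corrections one actually needs in \cref{c.singvalues} can be taken of the form $u\,K^{-1}\Delta K\,u^{-1}$ with $\Delta$ diagonal and the conjugating elements controlled via the $KAK$ decomposition; the quantitative step must be carried out there, not for an abstract left-invariant metric. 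As written, your proposal proves the bi-invariant case and correctly isolates, but does not supply, the estimate on which everything hinges --- and that estimate is unavailable in the stated generality.
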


Taking for $G$ the Lie group $\SL(d,\RR)$ endowed with the left-invariant Riemannian metric given by the inner-product $<M,M>=\tr(^tMM)$ on its lie algebra $\mathfrak{sl}(d,\RR)$, given any matrix $K\in \SO(d,\RR)$ and any diagonal matrix $A\in \SL(d,\RR)$ with positive diagonal entries $a_i$, the distance between $AK$ and $K$ is $\sqrt{\sum \ln^2(a_i)}$. See for instance~\cite{L}.

Define the following distance on $]0,\infty[^d$: 
 given two sequences $\sigma=(\sigma_1,\ldots,\sigma_d)$ and $\tau=(\tau_1,\ldots,\tau_d)$ in $]0,\infty[^d$, let 
 $$d(\sigma,\tau)=\sqrt{\sum \ln^2\left(\frac{\sigma_i}{\tau_i}\right)}.$$
Due to the fact that any matrix in $G$ decomposes as a product $LAK$, where $L,K\in \SO(d,\RR)$ and $A$ is diagonal with strictly positive entries, we have the following consequence of \cref{l.homogeneous.space}:

\begin{corollary}\label{c.singvalues}
Consider a sequence of matrices $A_1,\ldots ,A_n\in G=\SL(d,\RR)$ and let $\sigma=\sigma_i$ be the non-decreasing sequence of singular values of the product $A_n\ldots A_1$. Let  $\tau$ be a non-decreasing sequence $0< \tau_1\ \leq \ldots \leq \tau_d$. 

Then there exists $B_1,\ldots, B_n\in \SL(d,\RR)$ such that each $B_i$ is at distance $\leq \frac{d(\sigma,\tau)}{n}$ from $A_i$ and such that $\tau$ is the sequence of singular values of the product $$B_n\ldots B_1.$$\end{corollary}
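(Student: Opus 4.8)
The plan is to deduce the statement from \cref{l.homogeneous.space} after first replacing the product $g:=A_n\cdots A_1$ by a nearby matrix that already has the prescribed singular values. Write a singular value decomposition $g=LDK$ with $D=\operatorname{diag}(\sigma_1,\ldots,\sigma_d)$ and $L,K\in\SO(d,\RR)$; the orthogonal factors can be taken in $\SO(d,\RR)$ rather than in the full orthogonal group because $\det g=1$ (negate a column of $L$ and the matching row of $K$ if needed). Assuming, as the conclusion forces, that $\tau_1>0$ and $\prod_i\tau_i=1$, set $h:=L\,\operatorname{diag}(\tau_1,\ldots,\tau_d)\,K\in\SL(d,\RR)$, so that the increasing sequence of singular values of $h$ is exactly $\tau$. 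Granting the distance estimate $d(g,h)\le d(\sigma,\tau)$, \cref{l.homogeneous.space} applied to $g_i=A_i$ with $\epsilon=d(\sigma,\tau)/n$ yields $B_1,\ldots,B_n\in\SL(d,\RR)$ with $d(B_i,A_i)\le\epsilon$ and $B_n\cdots B_1=h$, which is precisely the assertion.

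So the real content is the estimate $d(g,h)\le d(\sigma,\tau)$. I would use left-invariance of the metric to write $d(g,h)=d(LDK,LD'K)=d(DK,D'K)$, where $D'=\operatorname{diag}(\tau_1,\ldots,\tau_d)$, and then bound this by the length of the explicit path
$$\gamma(t)=\operatorname{diag}\!\bigl(\sigma_1^{\,1-t}\tau_1^{\,t},\ldots,\sigma_d^{\,1-t}\tau_d^{\,t}\bigr)\,K,\qquad t\in[0,1],$$
which joins $DK$ to $D'K$ and stays in $\SL(d,\RR)$ since its determinant is $\prod_i\sigma_i^{1-t}\tau_i^{t}=1$. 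A one-line computation gives $\gamma(t)^{-1}\gamma'(t)=K^{-1}\Lambda K$, where $\Lambda=\operatorname{diag}\!\bigl(\ln(\tau_1/\sigma_1),\ldots,\ln(\tau_d/\sigma_d)\bigr)$ is a \emph{constant} element of $\mathfrak{sl}(d,\RR)$ — constant because the logarithmic derivative of $t\mapsto\sigma^{1-t}\tau^{t}$ is $\ln(\tau/\sigma)$, and of trace zero because $\prod_i\sigma_i=\prod_i\tau_i$. Since the inner product $M\mapsto\tr({}^tMM)$ is invariant under conjugation by $\SO(d,\RR)$, the left-invariant speed of $\gamma$ is
$$\bigl\|\gamma(t)^{-1}\gamma'(t)\bigr\|=\sqrt{\tr(\Lambda^2)}=\sqrt{\sum_i\ln^2(\sigma_i/\tau_i)}=d(\sigma,\tau)$$
for every $t$; hence the length of $\gamma$ equals $d(\sigma,\tau)$ and the estimate follows. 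Equivalently, one may conjugate by $K$ and invoke directly the formula $d(AK,K)=\sqrt{\sum\ln^2 a_i}$ recalled above, together with the same $\SO(d,\RR)$-conjugation invariance of the metric.

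I expect this distance estimate to be the only genuine obstacle: the metric on $\SL(d,\RR)$ is merely \emph{left}-invariant, not bi-invariant, so one cannot naively cancel the orthogonal factor $K$ against the diagonal part, and the argument hinges on the fact that the velocity of $\gamma$ lives in the single conjugate $K^{-1}\mathfrak{a}K$ of the diagonal traceless subalgebra $\mathfrak{a}$, on which the chosen inner product restricts isometrically to the Euclidean structure defining $d(\sigma,\tau)$. Everything else — the singular value decomposition bookkeeping ensuring $L,K\in\SO(d,\RR)$ and hence $h\in\SL(d,\RR)$, and the concluding appeal to \cref{l.homogeneous.space} — is routine.
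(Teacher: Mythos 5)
Your proposal is correct and follows the route the paper intends: singular value decomposition $g=LDK$, replacement of the diagonal part by $\operatorname{diag}(\tau_1,\ldots,\tau_d)$, a distance bound $d(g,h)\le d(\sigma,\tau)$ in the left-invariant metric, and then \cref{l.homogeneous.space} with $\epsilon=d(\sigma,\tau)/n$. The paper leaves the distance bound as an immediate consequence of the formula $d(AK,K)=\sqrt{\sum\ln^2 a_i}$; your explicit constant-speed path $\gamma(t)=\operatorname{diag}(\sigma_i^{1-t}\tau_i^{t})K$ with velocity $K^{-1}\Lambda K$ supplies the justification (correctly noting that left-invariance alone does not let one cancel $K$), and your handling of the implicit normalizations $\tau_1>0$, $\prod_i\tau_i=1$ and of the choice $L,K\in\SO(d,\RR)$ is sound.
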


%
%
%
%
%

\begin{proof}[Proof of \cref{l.nondom}]Let $\epsilon>0$ and $a>0$. Choose a linear isometry from each fiber $V_{x}$ to $\RR^2$ so that the linear maps $A(x)$ identify to elements of $G=\SL(2,\RR)$. Note that the singular values of $M\in G$ are $\|M\|^{-1}$ and $\|M\|$.
We apply twice \cref{c.singvalues} in the simpler 2-dimensional case: for all $\eta>0$ there exist $\delta>0$ and $N>0$ such that, under the assumptions of \cref{l.nondom}, on finds $0<m<n$ and a sequence $B_k$ of $\eta$-perturbations (for the Riemannian metric above) of the linear maps $A_k=A(T^{k}x)$ that satisfy the following:
 \begin{itemize}
 \item $\|B_m\ldots B_1\|=a$,
 \item $\|B_n\ldots B_1\|=1$. 
 \end{itemize}
 Taking $\eta>0$ small enough gives that the action of $B_k$ on the projective line is $\epsilon/3$-close to that of $A_kx$
 
Let $w\in \PP V_{T^{m}x}$ be the direction of the major semiaxis (it has length $a$) of the ellipse image of the unit ball of $V_x$ by the linear map $B_m\ldots B_1$. By a standard argument of linear algebra, if $a$ is large enough then there exists $\epsilon/2$-close to any $u\in \PP V_x$ a line $u'$ that is sent by $B_m\ldots B_1$ on a line $\epsilon/3$-close to $w$. Symmetrically, $\|B_n\ldots B_{m+1}\|=a$ and for any $v\in \PP V_{T^{n}x}$, there exists a vector $w'$ $\epsilon/3$-close to $w$ that is sent $\epsilon/2$ close to $v$ by $B_n\ldots B_{m+1}$. 

This straightforwardly gives the desired fibered $\epsilon$-chain.
\end{proof}

\begin{proof}[Proof of \cref{p.dominationmodelocking}]
If $\cA$ admits a dominated splitting over $\supp \mu$, then it  is clearly mode-locked over $\mu$. 
Assume that it does not admit one.

Let $\epsilon>0$. The path $\phi_\epsilon=(\PP\cA\circ \cR_t)_{|t|\leq \epsilon}$, where $\cR_t$ is the rotation of angle $t$ on each fiber, is strictly increasing.
Let $\delta$ and $N$ be as in  \cref{l.nondom}. By \cite[Theorem A]{Y} there exist $x\in \supp \mu$ and $n\geq N$ such that 
$$\frac{1}{n}\log\|A^n(x)\|<\delta.$$
\cref{l.nondom} implies that $w(\phi_\epsilon^{(n)} u)>1$ for all $u\in \PP V_x$. Hence, by \cref{l.rotationcriteria}, $\rho(\mu,\phi_\epsilon)>0$ for all $\epsilon>0$. Thus $\cA$ is not mode-locked over $\mu$.
\end{proof}

\subsection{Proofs of \cref{t.growuptheorem,t.projstructstab}}
If the two-dimensional bundles of the finest dominated splitting were smooth, then one could smoothly rotate along the those bundles. It would then be possible to prove \cref{t.growuptheorem} by adapting standard monotonicity arguments \`a la Avila-Yoccoz (see the proof of~\cite[Proposition 6]{Y}.

Unfortunately the bundles of the finest dominated splitting are in general only H\"older continuous. We rotate instead along smooth approximating bundles.
We first prove \cref{t.projstructstab}.

\begin{proof}[Proof of \cref{t.projstructstab}]
The fact that projective hyperbolicity implies projective structural stability is \cref{p.abc} and is proved in \cref{s.restofproofs}.

We prove it the other implication. Fix a cocycle $\cA$ satisfying the assumptions of \cref{t.projstructstab}. Assume that its finest dominated splitting has a $2$-dimensional bundle  $E$. One can fix an orientation on $E$ by hypothesis. Let $F$ be the direct sum of the other bundles of the dominated splitting. Note that $V=E\oplus F$, but this is not a dominated splitting. For all $\epsilon>0$, for $H=E,F$, there is a smooth/analytic bundle $H^\epsilon$ such that 
the $C^0$-distance from $H^\epsilon$ to $H$ is $<\epsilon$ (take a standard metric on the Grassmannian space). Endow $E^\epsilon$ with the continuation of the orientation on $E$.

For all $t\in \RR$,  let $R_t(x)$ be the linear map that restricts to the rotation of angle $t$ on the oriented plane $E^\epsilon(x)$, and that restricts to $\Id$ on the space $F^\epsilon(x)$. This defines a linear cocycle $\cR_t$ on $V$ fibering on $\Id_X$. 
Let $\cA_t=\cR_t\circ \cA$ and let $E_t$ be the continuation of the bundle $E=E_0$ for $\cA_t$, which is defined for all $t$ small enough.

\begin{claim}\label{c.incresingpath}
For $\epsilon,\eta>0$ small enough, the path of skew-products $\phi=\bigl(\PP E_t,\PP \cA_t\bigr)_{|t|\leq \eta}$ in $Sk^+_{\cE,T}$, where $\cE$ is as in \cref{e.vectorbundlesexample}, is defined and strictly increasing.
\end{claim}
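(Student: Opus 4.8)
The plan is to transport everything to the fixed circle bundle $\PP E^\epsilon$ and to exhibit the path there as an honest rotation corrected by a small drift. That the path is well defined is routine: for $|t|\le\eta$ with $\eta$ small (depending on $\epsilon$ and the domination constants of $\cA$) the cocycle $\cA_t=\cR_t\circ\cA$ is $C^0$-close to $\cA$, so the dominated splitting persists, $E=E_0$ continues to a bundle $E_t$ depending continuously on $(t,x)$; and since $\cA_0$ preserves the orientation of $E_0$ and orientation preservation is open and closed along the connected parameter interval, each $\cA_t$ preserves the continued orientation of $E_t$. Hence $\phi=(\PP E_t,\PP\cA_t)_{|t|\le\eta}$ is a path in $Sk^+_{\cE,T}$ with $\cE$ as in \cref{e.vectorbundlesexample}.

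The key quantitative input I will establish is that the bundles $E_t$ stay $O(\epsilon)$-$C^0$-close to $E^\epsilon$ \emph{and} move at $C^0$-speed $O(\epsilon)$ in $t$, uniformly in $x$. To see this, work in the Grassmannian chart around $E^\epsilon$: nearby bundles are graphs of sections $\gamma=(\gamma_x)_x$, $\gamma_x\colon E^\epsilon_x\to F^\epsilon_x$. The bundle $E$ corresponds to a section of norm $O(\epsilon)$, the graph transform $G$ of $\cA$ fixes it and is a uniform $\kappa$-contraction ($\kappa<1$) by domination (see \cite{BDV}), and — $\cA$ being uniformly bi-Lipschitz on the Grassmannian and $E^\epsilon$ being $\epsilon$-close to the $\cA$-invariant $E$ — also $G(0)$ has norm $O(\epsilon)$; so $G$ maps the ball $\{\|\gamma\|\le C\epsilon\}$ into itself, sending each of its points to a point of norm $O(\epsilon)$. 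Since $\cR_t$ preserves the plane bundle $E^\epsilon$ (rotating it) and is the identity on $F^\epsilon$, the graph transform of $\cA_t=\cR_t\circ\cA$ is $\gamma\mapsto G(\gamma)\circ R_{-t}$, with $R_{-t}$ the angle-$(-t)$ rotation of $E^\epsilon$; composing on the $E^\epsilon$-side by a rotation is norm-preserving on sections, so this is again a $\kappa$-contraction of the $C\epsilon$-ball whose derivative in $t$, $\gamma\mapsto G(\gamma)\circ\partial_tR_{-t}$, has operator norm $O(\epsilon)$ there. Its fixed point, which represents $E_t$, therefore stays $O(\epsilon)$-close to $E^\epsilon$ and is $O(\epsilon)$-Lipschitz in $t$, uniformly in $x$.

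Finally, for $\epsilon,\eta$ small the projection $p^\epsilon$ onto $E^\epsilon$ along $F^\epsilon$ restricts on each $E_{t,x}$ to an orientation-preserving linear isomorphism onto $E^\epsilon_x$; projectivizing these yields a continuous family of homeomorphisms $\PP E_t\to\PP E^\epsilon$ over $\Id_X$, so $\phi$ is cohomologous to the path $\psi=(\psi_t)$ in $Sk^+_{\PP E^\epsilon,T}$ given by $\psi_t(x)=\PP\bigl(p^\epsilon_{Tx}\circ R_t(Tx)\circ A(x)\circ q_{t,x}\bigr)$ with $q_{t,x}:=(p^\epsilon_x|_{E_{t,x}})^{-1}$. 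Because $\cR_t$ preserves $E^\epsilon\oplus F^\epsilon$ it commutes with $p^\epsilon$, and $\psi_t$ factors as
$$\psi_t(x)=\cR^{E^\epsilon}_t(Tx)\circ\PP C_{t,x},\qquad C_{t,x}:=p^\epsilon_{Tx}\circ A(x)\circ q_{t,x}\colon E^\epsilon_x\to E^\epsilon_{Tx},$$
where $\cR^{E^\epsilon}_t$ is the rotation by angle $t$ in the fibers of $\PP E^\epsilon$ (projectivization of $R_t|_{E^\epsilon}$) and $C_{t,x}$ is, by the previous step, a uniformly nondegenerate orientation-preserving isomorphism depending on $t$ only through $E_{t,x}$. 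Fixing an orientation-preserving identification $\PP E^\epsilon_{Tx}\cong\RR/\pi\ZZ$, this reads $\psi_t(x)(u)=\theta\bigl(\PP C_{t,x}(u)\bigr)+t$; since $t\mapsto\theta(\PP C_{t,x}(u))$ is $O(\epsilon)$-Lipschitz uniformly in $x,u$, for $\epsilon$ small the map $t\mapsto\psi_t(x)(u)$ is strictly increasing. Thus $\psi$ is strictly increasing, and conjugating $\psi$ by the fixed homeomorphism $\PP E^\epsilon\to\PP E_0$ induced by $p^\epsilon|_{E_0}$ gives a strictly increasing path in $Sk^+_{\PP E_0,T}$ cohomologous to $\phi$ — which is precisely what it means for $\phi$ to be strictly increasing.

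The main obstacle is the speed estimate in the second paragraph: the subbundle $E_t$ is in general only Hölder in the base point, so a priori there is no control on its drift in $t$. The point — and the reason for rotating along the smooth auxiliary bundle $E^\epsilon$ rather than along $E$ itself — is that the whole $t$-dependence of the construction factors through $\cR_t$, which genuinely preserves $E^\epsilon$; since $E^\epsilon$ is $\epsilon$-close to the $\cA$-invariant $E$, the contraction defining $E_t$ varies in $t$ only at speed $O(\epsilon)$, so the unit-speed rotation $\cR^{E^\epsilon}_t$ dominates the drift of $E_t$ once $\epsilon$ is small.
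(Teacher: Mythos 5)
Your proof is correct and its skeleton is the same as the paper's: both arguments conjugate $\PP\cA_t|_{E_t}$ onto a fixed circle bundle by a fibered linear projection and check that the genuine rotation term beats the drift caused by the motion of $t\mapsto E_t$. The difference is the choice of reference bundle and where the work is placed. The paper projects onto the invariant bundle $E=E_0$ along its invariant complement, so that the image of $\cA(x)(\pi|_{E_{t,x}})^{-1}u$ under $\pi$ is $t$-independent and only a correction of the form $\pi(\cR_t-\Id)(v_t-v_s)$ survives; it then dismisses the monotonicity as ``a trivial computation''. You instead project onto the smooth bundle $E^\epsilon$ along $F^\epsilon$, which gives the clean factorization $\psi_t=\cR^{E^\epsilon}_t\circ\PP C_{t,x}$ and concentrates all the work in the uniform $O(\epsilon)$-Lipschitz estimate for $t\mapsto E_t$, derived from the identity $G_t(\gamma)=G(\gamma)\circ R_{-t}$ and $\|G(\gamma)\|=O(\epsilon)$ on the invariant ball. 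Note that some uniform Lipschitz-in-$t$ control of $E_t$ is also needed to make the paper's ``trivial computation'' honest (the correction term must be beaten by the linear-in-$(t-s)$ gain from the rotation), so your version is, if anything, more complete on the one genuinely delicate point. A small caveat: you treat the forward graph transform over $E^\epsilon$ with fibers $F^\epsilon$ as a uniform contraction, which holds only when $E$ is an extremal bundle of the finest dominated splitting. When $E$ is a middle bundle (allowed in \cref{t.projstructstab,t.growuptheorem}), obtain $E_t$ as the transverse intersection of the continuations of $G_-\oplus E$ and $E\oplus G_+$, the first via the forward and the second via the backward graph transform; each is $O(\epsilon)$-Lipschitz in $t$ by your identical computation, the intersection inherits the bound, and the rest of your argument goes through unchanged.
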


\begin{proof}
Let $\pi\colon V\to V$ the linear cocycle that restricts on each $V_x$ to the projection of kernel $F_x$ on $E_x$. 
Fix $\epsilon>0$  so that for any vector $u$ in some neighborhood $\cU$ of $\PP E$ in $\PP V$, the path $\pi (\cR_tu)$ is a well-defined strictly increasing path in $\PP E$ with respect to the given orientation. Let $\eta>0$ such that for all $|t|<\eta$, the continuation $E_t$ of $E$ for $\cA_t$ is defined and lies in $\cU$. For all $|t|<\eta$, the map $\cB_t=\pi_{|E_t}\circ \cA_t\circ {(\pi_{|E_t})}^{-1}$ is a linear cocycle on $E$ and a trivial computation gives that the path $\bigl(\PP\cB_t\bigr)_{|t|\leq \eta}$ is a strictly increasing path in $Sk_{\PP E,T}^+$. This means by definition that $\phi$ is strictly increasing  in $Sk^+_{\cE,T}$.
\end{proof}

Let us assume now by contradiction that  $\PP\cA$ is structurally $C^r$-stable. 
Fix $\epsilon,\eta>0$ so that the conclusion of the claim is satisfied.  Domination implies that the projective bundle $\PP E$ is normally hyperbolic for $\PP\cA$. In particular, there is no other $\PP\cA$-invariant topological circle subbundles of $\PP V$ close to $\PP E$. This implies that a homeomorphism $h_t\colon \PP V\to \PP V$ fibering over $\Id_X$, close enough to $\Id_{\PP V}$ and conjugating $\PP\cA$ and $\PP\cA_t$ has to send $\PP E_t$ on $\PP E$. In other words, the restriction $\PP\cA_{|E}$ is conjugate to $\PP\cA_{t|E_t}$ by a fibered homeomorphism close to $\Id_E$. Let $\mu$ be an invariant measure of the base of $V$ with total support.
By \cref{r.locconj},  we have $\rho_{\cA,E}(\mu,\cA_{t})=0$ for $t$ close enough to $0$, which implies with \cref{c.incresingpath,r.strictincreasing.varying.case,p.dominationmodelocking} that there is a dominated splitting on $E$. This contradicts the fact that $E$ is a bundle of the finest dominated splitting, and ends the proof of \cref{t.projstructstab}.
\end{proof}

For the sake of clarity, we first prove the following particular case of \cref{t.growuptheorem} (which already implies \cref{t.complexdominationdichotconje} straightforwardly):

\begin{proposition}\label{t.babytheorem} Let $r\geq 0$. Let $\cA=(T,A)$ be a linear cocycle such that periodic points are dense for $T\colon X\to X$. Let $E$ be an orientable $2$-dimensional bundle of the finest dominated splitting on which orientation is preserved. 

Then in any $C^r$-neighborhood of $\cA$ there is a cocycle that has an elliptic point by restriction to the continuation of $E$.
\end{proposition}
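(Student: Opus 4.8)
The strategy is to combine the mode-locking/domination dichotomy of \cref{p.dominationmodelocking} with the continuity of the relative rotation number on periodic measures, exactly as sketched after the statement of \cref{t.complexdominationdichotconje}. First I would note that since $E$ is a bundle of the \emph{finest} dominated splitting, the restriction $\cA_{|E}$ admits no dominated splitting over all of $X$; moreover, since periodic points are dense for $T$, the base dynamics has an invariant measure $\mu$ of total support (take a weak-$*$ limit of averaged periodic measures, or simply any fully-supported invariant measure), and $\supp(\mu)=X$. By \cref{p.dominationmodelocking}, $\PP(\cA_{|E})$ is therefore \emph{not} mode-locked over $\mu$, so by \cref{c.modelocking.increasing} (in the form \cref{r.strictincreasing.varying.case}) either all strictly increasing paths, or all strictly decreasing paths, starting at $\cA_{|E}$ have nonzero rotation number over $\mu$; up to reversing the orientation on $E$ we may assume the former.

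Next I would produce the actual perturbations. As in the proof of \cref{t.projstructstab}, fix a smooth (or analytic) approximating bundle $E^\epsilon$ with $C^0$-distance $<\epsilon$ to $E$, carrying the continuation of the orientation, and a smooth complementary bundle, and let $\cR_t$ be the cocycle fibering over $\Id_X$ that rotates by angle $t$ in $E^\epsilon$ and is the identity on the complement; set $\cA_t=\cR_t\circ\cA$ and let $E_t$ be the continuation of $E$. By \cref{c.incresingpath} (whose proof is identical here: push-forward by the projection $\pi_{|E_t}$ of direction the complement onto $E$), for $\epsilon,\eta>0$ small the path $\phi=(\PP E_t,\PP\cA_t)_{|t|\le\eta}$ in $Sk^+_{\cE,T}$ is strictly increasing, hence by \cref{r.strictincreasing.varying.case} $\rho_{\cA,E}(\mu,\cA_\eta)=\rho(\mu,\phi)>0$. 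Crucially, the perturbations $\cA_t$ are $C^r$-small for every $r$ once $\eta$ is small — this is the payoff of rotating along the \emph{smooth} bundle $E^\epsilon$ rather than the merely H\"older bundle $E$.

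Now I would invoke continuity and density of periodic measures to pass from $\mu$ to a periodic orbit. Since periodic points are dense in $X$ and $\mu$ has total support, $\mu$ is a weak-$*$ limit of periodic probability measures $\mu_n$ supported on periodic orbits of period $p_n\to\infty$ (one can arrange $p_n\to\infty$ because $X$ must be infinite — a finite $X$ carrying a cocycle with a nontrivial finest dominated splitting of dimension $2$ would force $\cA_{|E}$ to be dominated). By continuity of $\rho_{\cA,E}(\cdot,\cA_\eta)$ on $\cM(T)$ (\cref{p.rotationmorphismvarying2}), for $n$ large we get $\rho_{\cA,E}(\mu_n,\cA_\eta)>\pi/p_n$. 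Then continuity of $t\mapsto\rho_{\cA,E}(\mu_n,\cA_t)$ on $[0,\eta]$ together with \cref{r.realeig} yields a parameter $t_\ast\in(0,\eta)$ at which $\theta_{\cA_{t_\ast}}=\pi/2$ modulo nothing problematic — more precisely, at which the first-return map of $\cA_{t_\ast}^{p_n}$ restricted to $E^{\cA_{t_\ast}}$ at the periodic point has non-real (complex conjugate) eigenvalues. By a further arbitrarily small $C^r$-perturbation, supported near the periodic orbit, one separates the remaining moduli of the ambient cocycle so that this periodic point becomes elliptic by restriction to the continuation of $E$: its Lyapunov exponents satisfy the required gap conditions and the central pair is complex. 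This $\cA_{t_\ast}$ (after the final perturbation) lies in the prescribed $C^r$-neighborhood of $\cA$, proving the proposition.

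The main obstacle, and the place where genuine care is needed, is the regularity bookkeeping: the bundle $E$ is only H\"older, so one cannot rotate along $E$ itself and stay $C^r$-small; the device of rotating along a smooth approximant $E^\epsilon$ and controlling the error via \cref{c.incresingpath} is what makes the scheme work, and one must check that the rotation number computed along the \emph{continuation} $E_t$ (not along $E^\epsilon$) is still strictly positive — this is exactly what \cref{p.conjugacyinvariant2}/\cref{r.locconj} and the cohomology-invariance built into \cref{p.rotationmorphismvarying2} are for. A secondary point to get right is that the final moduli-separating perturbation can be done while preserving the complex central pair and the domination gaps inherited from the finest dominated splitting of $\cA$ on the periodic orbit, which follows from genericity of spectra of matrices together with the openness of the relevant gap conditions.
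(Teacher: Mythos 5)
Your setup — the fully supported invariant measure, \cref{p.dominationmodelocking}, the smooth approximating bundle $E^\epsilon$, the rotation cocycle $\cR_t$, and \cref{c.incresingpath} giving $\rho_{\cA,E}(\mu,\cA_{\hat t})>0$ — is exactly the paper's argument. The gap comes immediately after: you assert that since periodic points are dense and $\mu$ has total support, $\mu$ is a weak-$*$ limit of \emph{single} periodic probability measures $\mu_n$ with $p_n\to\infty$. Under the hypotheses of the proposition this is false. Density of periodic points only lets you write a fully supported measure as a countable convex combination $\mu=\sum_n\nu_n$ of positive periodic measures; it does not make $\mu$ a limit of individual periodic measures. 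The simplest counterexample is $T=\Id_X$ on an infinite compact $X$ (which satisfies the hypotheses, and is precisely the kind of base appearing in the paper's remark after \cref{c.complexdominationdichotconje}): there every periodic probability measure is a Dirac mass, every weak-$*$ limit of such is again a Dirac mass, and no fully supported measure is of that form. Your intermediate-value argument on a single long periodic orbit is the right move in the proof of \cref{t.cocyclewithsimpleLyap}, where hyperbolicity of the base supplies genuine periodic approximations via shadowing, but it is not available here.

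The paper closes this gap differently, and you should too: keep $\mu=\sum_n\nu_n$, truncate to $\mu_N=\sum_{n=0}^N\nu_n$, and use continuity in the measure to get $\rho_{\cA,E}(\mu_N,\cA_{\hat t})>0$ for some finite $N$. Then argue by contradiction: if no periodic orbit carrying one of the $\nu_n$ acquires non-real eigenvalues in $E_t$ for $t\in[0,\hat t]$, then by \cref{r.realeig} each $t\mapsto\rho_{\cA,E}(\nu_n,\cA_t)$ takes values in a discrete set (since $\theta_{\cA_t}\in\{0,\pi\}$ throughout), hence is constant by continuity, so $\rho_{\cA,E}(\mu_N,\cA_t)$ is constantly $0$ — contradicting positivity at $t=\hat t$. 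This produces a parameter at which some periodic point is elliptic in the continuation of $E$, with no need to control periods. Two minor further points: your parenthetical claim that a finite $X$ would force $\cA_{|E}$ to be dominated is also wrong (a parabolic return map gives an undominated $2$-dimensional bundle over a finite base), and your final ``moduli-separating'' perturbation is unnecessary, since the gap conditions to the adjacent bundles are already supplied by the domination of the finest splitting.
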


\begin{proof}[Proof of \cref{t.babytheorem}]
We start as in the proof of \cref{t.projstructstab} until \cref{c.incresingpath}.
We take $\epsilon,\eta>0$ so that the conclusion of \cref{c.incresingpath} is satisfied.
The periodic points are dense in $X$, therefore there is a sequence of positive periodic measures $\mu_n$ such that $\mu=\sum \mu_n$ is a finite measure of total support in $X$. 
By \cref{p.dominationmodelocking,r.strictincreasing.varying.case}, one finds $\hat{t}$ arbitrarily close to zero such that the relative rotation number satisfies $\rho_{\cA,E}(\mu,\cA_{\hat{t}})>0$.

By continuous dependence of the rotation number to the measure, there exists an integer $N>0$ such that, defining $\nu_N=\sum_{n=0}^{N}\mu_n$, we have  $\rho_{\cA,E}(\nu_N,\cA_{\hat{t}})> 0$. If none of the periodic points corresponding to measures $\nu_n$ had complex eigenvalues in $E_t$, from $t=0$ to $t=\hat{t}$, then by  \cref{r.realeig} the relative rotation number 
$$\rho_{\cA,E}(\nu_N,\cA_{t})=\sum_{n=0}^{N}\rho_{\cA,E}(\mu_n,\cA_{t})$$ would remain constant from $t=0$ to $t=\hat{t}$ for all $N\in \NN$, which is absurd since $\rho_{\cA,E}(\nu_N,\cA_0)=0$ ! Therefore, there exists a smooth perturbation that creates in the continuation of $E$ a pair of complex (non-real) eigenvalues along some periodic point $x\in X$. This ends the proof of  \cref{t.babytheorem}.
 \end{proof}
 
 We can now tackle the general theorem.

\begin{proof}[Proof of \cref{t.growuptheorem}] The difficulty is that a perturbation that creates an elliptic point within the first bundle may create dominated splittings within other bundles: the naive idea of doing a sequence of $\ell$ perturbations does not work. 

Since a small perturbation will not destroy an already existing elliptic point, we only have to work on the bundles that do not contain an elliptic point. We may assume that the bundles $E_1,\ldots, E_\ell$ contain no elliptic points.
Consider a sequence of positive periodic measures $\mu_n$ such that $\mu=\sum \mu_n$ is a finite measure of total support in $X$ and such that $\mu_n(X)\in \QQ$ for each $n$.
By \cref{p.dominationmodelocking}, $\PP\cA_{|E_i}$ is not mode-locked over $\mu$. Choose an orientation on each $E_i$ so that $\PP\cA_{|E_i}$ is not upper semi-locked.

Let $F$ be the sum of the bundles of the finest dominated splitting, other than the bundles $E_i$. Define $F^\epsilon$ and oriented bundles $E_i^\epsilon$ as in the proof of  \cref{t.babytheorem}.

Given $\underline{t}=(t_1,\ldots,t_\ell)\in \RR^\ell$, let $R_{\underline{t}}(x)\in GL(d,\RR)$ be the linear map that restricts to the rotation of angle $t_i$ on the oriented plane $E^\epsilon_i(x)$, and that restricts to $\Id$ on the space $F^\epsilon(x)$. This defines a linear cocycle $\cR_{\underline{t}}$ fibering on $\Id_X$. As in the proof of \cref{t.projstructstab}, let $\cA_{\underline{t}}=\cR_{\underline{t}}\circ \cA$. One finds a simply connected neighborhood $\cU$ of $\cA$ on which the finest dominated splitting of $\cA$ admits a continuation. Let $\cO$ be a neighborhood of $\underline{0}=(0,\dots, 0)$ such that $\cA_t\in \cU$ and let $E_{\underline{t}}=(E_{1,\underline{t}},\ldots, E_{\ell,\underline{t}})$ be the continuation of $E_{\underline{0}}:=(E_1,\ldots,E_\ell)$ for $\cA_t$.

One can define a rotation number of $\cA_{\underline{t}}$ relative to $\cA$ along each $E_i$, for each $\underline{t}\in \cO$.
Let  $\rho_{\cA,\underline{E}}(\mu,\cB)$ be the compound rotation number, that is, the $\ell$-uple of rotation numbers $\rho_{\cA,E_i}(\mu,\cB)$. Define then
\[\Theta\colon \underline{t}\in
\cO \mapsto \rho_{\cA,\underline{E}}(\mu,\cA_{\underline{t}}).
\]

\begin{claim}\label{c.3.5}
For any neighborhood $U \subset \cO$ of $0$, there exists an open set $0\in D\subset U$ and $z\in \RR^\ell$ such that the Brouwer degree $d_B(\Theta,D,z)$ is defined and non-zero.
\end{claim}

We first show how this claim implies the theorem. Consider the compound rotation numbers over the measures \[\nu_N=\mu_0+\ldots +\mu_N,\] 
that is, the maps \[^N\Theta(\underline{t}):=\rho_{\cA,\underline{E}}(\nu_N,\cA_{\underline{t}}).\] By continuity of the rotation number, the maps $^N\Theta$ converge uniformly to $\Theta$ on compact sets, therefore for $N$ large enough, the Brouwer degree $d_B(^N\Theta,D,z)$ is also defined and non-zero.
In particular $z$ is in the interior of $^N\Theta(D)$.

Let $\underline{t}\in D$ such that $^N\Theta(\underline{t})$ has irrational coordinates. Let $1\leq i\leq \ell$. By assumption, the restriction of $\cA$ to $E_i$ admits no elliptic point and $\mu_k(X)\in \QQ$ for $k=1,\ldots, N$. This implies by \cref{r.realeig} that $\cA_{\underline{t}}$ admits an elliptic point by restriction to the continuation $E_{i,\underline{t}}$. Hence \cref{c.3.5} implies indeed \cref{t.growuptheorem}.
\medskip

\begin{proof}[Proof of  \cref{c.3.5}]
Let $\pi_i\colon V\to V$ be the linear cocycle whose restriction on each fiber $V_x$ is the projection of direction $F_x\oplus \bigoplus_{i\neq j} E_{j,x}$ and of base $E_{i,x}$. Choosing  $\epsilon>0$ small enough, one has:
\begin{itemize}
\item for any vector $u$ in some neighborhood $\cU_i$ of $\PP {E_i}$ in $\PP V$, 
\item for any $C^1$-path $\bigl(\underline{t}_s\bigr)_{s\in \RR}$ tangent to the cone 
\[C_{i,3}=\{(t_1,\ldots, t_\ell)\in \RR^\ell:\forall j\neq i, |t_j|<3t_i\},\footnote{in particular, $t_i$ is strictly positive here.},\]
\end{itemize}
the path $\pi_i (\cR_{\underline{t}_s}u)$ is a well-defined strictly increasing path in $\PP E_i$. Choose $\eta>0$ small enough such that 
$\tB_\eta=[-\eta,\eta]^\ell\subset U$ and for all $\underline{t}\in \tB_\eta$ the continuation $E_{i,\underline{t}}$ lies in $\cU_i$.
Then the exact same way as in the proof of \cref{c.incresingpath} of \cref{t.projstructstab}, for any $C^1$-path $(\underline{t}_s)$ in $\tB_\eta$ tangent to $C_{i,3}$,  we can show that the path $\phi=(\PP E_{i,\underline{t}_s},\PP \cA_{\underline{t}_s})$ is strictly increasing.
Denote the pair of $i$-th opposite faces of the cube $\tB_\eta$ by 
\begin{align*}
\tB_{i}^-&=[-\eta,\eta]^{i-1}\times \{-\eta\}\times [-\eta,\eta]^{\ell-i}\\
\tB_{i}^+&=[-\eta,\eta]^{i-1}\times \{\eta\}\times [-\eta,\eta]^{\ell-i}.
\end{align*}
Any point of $\tB_{i}^+$ can be joined from $\underline{0}$ by a $C^1$-path $(\underline{t}_s)$ in $\tB_\eta$ tangent to $C_{i,3}$. And $0$ can be joined from any point of  $\tB_{i}^-$ by a $C^1$-path $(\underline{t}_s)$ in $\tB_\eta$ tangent to $C_{i,3}$. We chose the orientations on the bundles $E_i$ so that $\cA_{\underline{0}}=\cA$ is not upper semi-locked by restriction to $E_i$. By  \cref{r.strictincreasing.varying.case}, we get that 
\[\rho_{\cA,{E}_i}(\mu,\cA_{\underline{t}})>0 \qquad\mbox{ on  $\tB_{i}^+$}.\]
 \[\rho_{\cA,{E}_i}(\mu,\cA_{\underline{t}})\leq0  \qquad\mbox{ on  $\tB_{i}^-$}.\] Let $\alpha>0$ such that for each $i$ we have $\rho_{\cA,{E}_i}(\mu,\cA_{\underline{t}})>\alpha$ on $\tB_{i}^+$. Let $\Phi$ be the homothecy that sends the cube $\tB_\eta$ on the cube $[0,\alpha]^\ell$. The barycentric isotopy $\Phi_s$ between 
\[\Phi_0=\Theta:=(\underline{t}\in
\cO \mapsto \rho_{\cA,\underline{E}}(\mu,\cA_{\underline{t}}))\] and $\Phi_1=\Phi$ satisfies that for all $s\in [0,1]$, the image of the boundary $\Phi_s(\partial \tB_\eta)$ does not intersect the interior $]0,\alpha[^\ell$. 
Let $z=(\alpha/2,\ldots,\alpha/2)$ and let $D$ be the interior of $\tB_\eta$. Then the Brouwer degree $d_B(\Theta,D,z)$ is equal to $d_B(\Phi,D,z)$ which is non-zero.
 \end{proof}
This ends the proof of \cref{t.growuptheorem}.
\end{proof}

\section{The rest of the proofs.}\label{s.restofproofs}
This section is independent from \cref{s.rotmorphism.modelock,s.varyingbundles,s.mainth}, it gathers the proofs of the other results stated in \cref{s.structstab}. There are no particularly new or unexpected arguments here, but we wrote it in the details for the sake of completeness. 

\subsection{Proof of \cref{p.abc}}
\begin{proof}[Projective structural stability implies robust absence of elliptic points] The fact that $(b)\Rightarrow (c)$ is quite clear. Indeed, if $\cA$ can be $C^r$-perturbed into a cocycle $\cB$ that has an elliptic point, then $\PP\cB$ has a normally hyperbolic circle on which the first return map is conjugate to a rotation. A further $C^r$-perturbation changes the angle of the elliptic point, hence the rotation number along that normally hyperbolic circle by \cref{r.realeig}. Therefore $\PP\cA$ is not structurally $C^r$-stable by \cref{r.locconj}. 
\end{proof}

\begin{proof}[Projective hyperbolicity implies projective structural stability] 
We now show $(a)\Rightarrow (b)$.  Let $\cA$ be a cocycle that admits a dominated splitting $V=E_1\oplus \ldots \oplus  E_d$ into $1$-dimensional bundles. Let $\cA_k$ be a sequence of cocycles tending to $\cA$ for the topology of uniform $C^0$-convergence. We need to find a sequence of homeomorphisms $h_k$ of $\PP V$ fibering over $\Id_X$ and tending to  $\Id_{\PP V}$, such that any $k\in \NN$ large enough  conjugates $\PP\cA$ to $\PP\cA_k$.

There is $N\in \NN$ such that:
\begin{itemize}
\item  for each $k \geq N$, the cocycle $\cA_k$ admits a dominated splitting  $V=E^k_1\oplus \ldots \oplus  E^k_d$ and each sequence of bundles $(E^k_i)_{k\geq N}$ converges to $E_i$ for the $C^0$-topology,
\item  for each $k \geq N$, there is an invertible cocycle $\cB_k\colon V\to V$ fibering on $\Id_X$ such that $\cB_k$ sends $E_i$ on $E_i^k$,
\item the sequence $\cB_k$ of cocycles tends to $\Id_{V}$.
\end{itemize}
Reindexing by $k:=k+N$ and doing the substitution $\cA_k:=\cB_k\circ \cA_k\circ \cB_k^{-1}$, we may assume without loss of generality that $V=E_1\oplus \ldots \oplus E_d$ is a dominated splitting for all linear cocycles $\cA_k$, $k\geq 0$.

We now prove our result by induction on $d$. It is clearly true for $d=1$. Take $d>1$ and assume the result is true in dimension $d-1$. Let $W=E_2\oplus \ldots \oplus E_d$. Then there is a sequence $h_k\colon \PP W \to \PP W$ of homeomorphisms fibering on $\Id_X$ tending to $\Id_{\PP W}$ such that, for any integer $k$ greater than some $N\in \NN$, we have
$$\PP\cA_{|\PP W}= h_k^{-1}\circ \PP \cA_k\circ h_k.$$
With \cite{G1}, we may assume that 
\begin{itemize}
\item the metric $\|.\|$ on $V$ is {\em adapted} to the dominated splitting for $\cA$, that is, for any point $x\in X$, for any unit vectors $u\in E_{1,x}$ and $v\in W_x$, we have  $\|\cA u\|<\|\cA v\|$,
\item $E_1$ is orthogonal to $W$.
\end{itemize} 
Given $y\in \PP W_x$ and $\theta\in [0,\pi/2]$, denote by $[\theta:y]$ the set of points $z\in \PP V_x$ such that
\begin{itemize}
\item the minimum angle between the line $\RR.z$ and $W$ is equal to $\theta$, 
\item if $\theta\neq \pi/2$ then the orthogonal projection on $W$ sends $z$ on $y$.
\end{itemize}
Note that $[\theta: y]$ has cardinality $1$ if $\theta\in \{0,\pi/2\}$ and cardinality $2$ otherwise. Since they preserve the bundles $W$ and $E_1$, the actions of $\PP \cA$ and $\PP \cA_k$ factor  into actions on the set $\{[\theta:y], y\in \PP W, \theta\in [0,\pi/2]\}$, indeed $\PP\cA[\theta:y]=[\xi_y\theta:\PP\cA y]$, where each 
$$\xi_y\colon [0,\pi/2] \to [0,\pi/2]$$ is an orientation preserving diffeomorphism. 

The fact that the metric is adapted to the dominated splitting $E_1\oplus W$ implies that $\PP\cA [1:y]=[a(y):\PP\cA y]$, where $a$ is a continuous function with $0<a<1$. On the other hand,  $\PP\cA_k [1:y]=[a_k(y):\PP\cA_k y]$, where $a_k$ is a sequence of continuous functions converging uniformly to $a$.

Define a conefield $C\subset \PP V$ as the following union
$$C=\bigcup_{0\leq \theta\leq 1\atop y\in \PP W}[\theta: y].$$ This is a (strictly invariant) center-unstable cone-field for $\PP\cA$, that is,  $\PP\cA C$ is in the interior of $C$. 
Since $\cA_k$ converges to $\cA$, for any $k$ larger than some $n\in \NN$, the set $\PP\cA_k C$ is also in the interior of $C$.
In that case, the sets $C\setminus \PP\cA C$ and $C\setminus \PP\cA_k C$ are respective fundamental domains of $\cA$ and $\cA_k$ by restriction to the set $\PP V \setminus (\PP W\cup \PP E_1)$.

Let $\eta_{k,y}$ be the affine orientation preserving homeomorphism from the segment $[a(y),1]$ on the segment $[a_k(y),1]$. There exists $\epsilon>0$ and $N\geq n$ such that if $k\geq N$, then for any subset $[\theta: y]\subset  C\setminus \PP\cA C$ and any $v\in [\theta: y]$, there exists a unique $w\in [\eta_{k,y}(\theta):h_k(y)]$ such that $\dist(v,w)<\epsilon$. This defines a homeomorphism $g_k\colon C\setminus \PP\cA C \to C\setminus \PP\cA_k C$. The sequence $(g_k)_{k\geq N}$ tends to $\Id_{C\setminus \PP\cA C}$ as $k\to \infty$.

For $k\geq N$, the set $\PP V \setminus (\PP W\cup \PP E_1)$ is the disjoint union of the $\PP\cA$-iterates of $C\setminus \PP\cA C$  (resp. of the $\PP\cA_k$-iterates of  $C\setminus \PP\cA_k C$). Hence, we can extend $g_k$ into a bijection of $\PP V \setminus (\PP W\cup \PP E_1)$ by defining $g_k:=\PP\cA^n_k\circ g_k\circ \PP\cA^{-n}$ by restriction to $\PP\cA^n(C\setminus \PP\cA C)$, for each $n\in \ZZ$. We leave it to the reader to check that 
\begin{itemize}
\item $g_k$ is continuous,
\item it extends by $\Id_{\PP E_1}$ on $\PP E_1$ and by $h_k$ on $\PP W$ into a homeomorphism $\tilde{g}_k$ of $\PP V$,
\item $\tilde{g}_k$ tends to $\Id_{\PP V}$ as $k\to \infty$.
\item $\PP\cA=\tilde{g}_k^{-1}\circ \PP\cA_k\circ \tilde{g}_k$.
\end{itemize}
We just proved that $\PP\cA$ is structurally $C^0$-stable.
\end{proof}

\subsection{The converse implications}\label{s.abcequivalence}
We show the  $C^0$-regularity cases of conjectures \ref{c.complexdominationdichotconje} and \ref{c.ba}. All arguments are standard and therefore only briefly presented.

In this section, $V$ is a bundle with compact base $X$ and endowed with some continuous Euclidean metric $\|.\|$. We say that a finite sequence of matrices $A_1,\ldots, A_p$ of  $\GL(d,\RR)$ is {\em $N$-dominated} if the induced linear cocycle $\cA$ on the trivial bundle $V=\ZZ/p\ZZ\times \RR^d$ over the shift 
\[k\in \ZZ/p\ZZ\mapsto k+1\]
 admits a dominated splitting 
$V=E\oplus F$ such that for all $x\in X$, for all unit vectors $u,v\in E_x,F_x$ above $x$, we have 
$$ \|\cA^N(u)\|<\frac{1}{2} \|\cA^N(v)\|.$$
We obtain the following as a direct consequence of the main result of \cite{BGV}: 
\begin{proposition}\label{l.BGV}
Let $K$ be a compact set in $\GL(d,\RR)$ and $\epsilon>0$. There exists an integer $N\in \NN$ and  such that, if a sequence $A_1,\ldots, A_p$ in $K$ has no $N$-domination of any index, then there is an $\epsilon$-perturbation $B_1,\ldots, B_p$ of the sequence $A_1,\ldots, A_p$ such that the product $B_p\ldots B_1$ has all eigenvalues with same modulus.
\end{proposition}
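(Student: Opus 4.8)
The plan is to obtain \cref{l.BGV} essentially verbatim from the perturbation theorem of Bonatti--Gourmelon--Vivier~\cite{BGV}, so the work is almost entirely one of translation. First I would recall the statement of~\cite{BGV} in its cocycle (rather than diffeomorphism) form: for fixed $d\in\NN$ and $\epsilon>0$ there is an integer $N$ such that, for every $p\in\NN$ and every periodic sequence $A_1,\dots,A_p$ taken from a fixed compact $K\subset\GL(d,\RR)$, if for no index $0<i<d$ the induced periodic cocycle admits an $N$-dominated splitting of index $i$, then there is an $\epsilon$-perturbation $B_1,\dots,B_p$ (staying in a fixed compact neighbourhood of $K$) for which the return matrix $B_p\cdots B_1$ is, up to conjugacy, a scalar multiple of an orthogonal matrix; in particular all its eigenvalues have the same modulus. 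Since the integer $N$ there depends only on $d$, $K$ and $\epsilon$, and not on $p$ nor on the particular sequence, this is exactly the assertion of \cref{l.BGV}; so the proof reduces to checking that the two hypotheses match.

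The key bookkeeping steps, in order, would be: (i) reconcile the notion of domination. \cref{s.domsplitdef} defines an $N$-dominated splitting through the pointwise inequality $\|\cA^N u\|<\tfrac12\|\cA^N v\|$, whereas~\cite{BGV} (following the singular-value / invariant cone-field formalism) uses a gap condition on the singular values of the iterates. These are equivalent up to replacing $N$ by a larger threshold, by the characterisations of domination in~\cite{BG1} and the adapted-metric construction of~\cite{G1}; hence ``no $N$-domination of any index'' in our sense implies ``no $N'$-domination of any index'' in the sense of~\cite{BGV} for a suitable $N'=N'(N,d)$, and it suffices to take the $N$ of the statement large enough. (ii) Reconcile ``$\epsilon$-perturbation'': whether measured multiplicatively in $\GL(d,\RR)$ or additively, one passes from one to the other at the cost of enlarging $K$ to a slightly bigger compact, which only changes the constant and keeps $N$ dependent on $(d,K,\epsilon)$ alone. (iii) Assemble: feed the sequence into~\cite{BGV} and read off the conclusion. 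If the precise statement of~\cite{BGV} carries a lower bound on the period $p$, short sequences would be treated separately --- for $p$ bounded and $N$ chosen large the absence of $N$-domination of every index forces the moduli of the eigenvalues of the return matrix to be nearly equal, and a direct perturbation of a single factor $A_j$ then equalises them.

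I do not expect a genuine obstacle \emph{within this deduction}: the substantive difficulty --- namely that the failures of domination of the various indices may be witnessed at different times along the orbit, yet must be exploited by a \emph{single} small perturbation of the whole sequence that equalises \emph{all} the moduli simultaneously --- is precisely what is resolved in~\cite{BGV}, and once that theorem is in hand there is nothing more to do. The only point demanding any care is the one flagged above, i.e.\ making sure the ``$\tfrac12$ after $N$ steps at every point'' formulation of domination used here is transferred correctly, with the appropriate uniform choice of $N$, to the formulation under which~\cite{BGV} is stated.
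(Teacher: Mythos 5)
Your proposal is correct and takes essentially the same route as the paper: the paper likewise feeds the sequence into the main theorem of~\cite{BGV} for periods $p\geq P$ and handles the remaining bounded periods $p\leq P$ by exactly the argument you sketch in step (iii) --- absence of $N$-domination for $N$ large (depending only on $d$, $K$, $P$) forces the eigenvalue moduli of the return matrix to lie within a factor $1+\eta$ of one another, after which a perturbation of a single factor equalises them. The only cosmetic discrepancy is that the conclusion of~\cite{BGV} is ``real eigenvalues of the same modulus'' rather than ``conjugate to a scalar multiple of an orthogonal matrix'', which is immaterial for the statement being proved.
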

Note that in \cite{BGV}, it is required that the length $p$ of the sequence be large enough. This hypothesis is indeed needed to obtain real eigenvalues after perturbation.  \cref{l.BGV} is however a straightforward consequence of the two following lemmas:

\begin{lemma}[Bonatti-G.-Vivier \cite{BGV}]
Let  $K$ be a compact set in $\GL(d,\RR)$ and $\epsilon>0$. There exists  two integers $N\in \NN$ and $P\in \NN$ such that, for any $p\geq P$, if a sequence $A_1,\ldots, A_p$ in $K$ has no $N$-domination of any index, then there is an $\epsilon$-perturbation $B_1,\ldots, B_p$ of the sequence $A_1,\ldots, A_p$ such that the product $B_p\ldots B_1$ has real eigenvalues with same modulus.
\end{lemma}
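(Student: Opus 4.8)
The plan is to follow the argument of \cite{BGV}, of which this is the central perturbation lemma. Enlarging $K$ we may assume it is compact, symmetric and contains a neighbourhood of $\Id$, and we write $M=A_p\cdots A_1$ for the total product. The organising principle is that an $\epsilon$-perturbation $(B_k)$ of the sequence lets one replace $M$ by any product reached along a path whose factor-by-factor speed stays below $\epsilon$: by \cref{l.homogeneous.space}, for a suitable left-invariant metric on $\GL(d,\RR)$, one may left-multiply $M$ by any element within distance $p\epsilon$ of the identity, and by \cref{c.singvalues} one may freely prescribe the singular values of $M$ provided they are not dragged too far in the logarithmic scale. The one genuine obstruction to reshuffling singular values coherently along a periodic orbit is a dominated splitting, so the hypothesis ``no $N$-domination of any index'' is exactly what should unlock the construction.

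First I would fix $N=N(K,\epsilon,d)$ large and set up the windows. By the quantitative form of the cone criterion for dominated splittings, if the periodic sequence admits no $N$-dominated splitting of index $i$ then there is a position $k$ along the orbit where the window product $W=A_{k+N}\cdots A_{k+1}$ has its $i$-th and $(i+1)$-th singular values within a factor $2$ of each other (and their directions not uniformly transverse) --- call this an \emph{undominated window for index $i$}. Since by hypothesis no index $i\in\{1,\dots,d-1\}$ is $N$-dominated, one obtains such a window for every $i$; and since $p\geq P$ one may take them pairwise disjoint with room to spare.

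The heart of the matter is the single-window perturbation lemma of \cite{BGV}: at an undominated window for index $i$, composing its $N$ factors with suitable small rotations in the relevant $2$-planes --- distributed across the window by \cref{l.homogeneous.space} so that each factor moves by at most $\epsilon$, which is a second reason for $N$ large --- produces a new total product whose $i$-th and $(i+1)$-th singular values have been brought equal (by a continuity argument in the inserted angles), while the singular values at the other positions change by at most a factor $1+\delta$, with $\delta$ as small as we please. Performing one such fusion for every index carrying a gap, then a second round to mop up the $1+\delta$ errors, and finally one application of \cref{c.singvalues} costing only $O\!\big(\sqrt d\,\log(1+\delta)/p\big)$ per factor --- affordable precisely because $p\geq P$ is large --- brings the total product to the form $c\,O$ with $c>0$ and $O\in O(d)$, i.e.\ with all singular values equal to $c$. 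At this stage the non-domination hypothesis has been fully spent; the leftover orthogonal factor $O$ is \emph{bounded}, so $c\,O$ differs by left-multiplication by a bounded orthogonal matrix from a diagonal matrix with entries $\pm c$ (signs chosen so the determinant matches). One last appeal to \cref{l.homogeneous.space}, again possible because $p\geq P$, realises that left-multiplication through $\leq\epsilon$ perturbations of the factors, and the resulting product $B_p\cdots B_1$ has all its eigenvalues real and equal to $\pm c$, as required.

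The main obstacle is everything packed into the third paragraph: establishing the single-window fusion lemma with quantitative control on how the \emph{other} singular values move (so that the successive fusions converge instead of chasing each other), choosing $N$ uniformly for all indices and $P$ uniformly for all $p\geq P$, and ordering the fusions so the singular-value gaps decrease monotonically. The length hypothesis $p\geq P$ is structural, not cosmetic: each fusion needs a window of length $\geq N$, and reaching a matrix with all singular values equal, and then tilting its bounded orthogonal part to real-eigenvalue form, both cost an amount of ``product displacement'' realisable through $\leq\epsilon$ per-factor moves only when there are enough factors. The short sequences excluded here are then dispatched directly by \cref{c.singvalues}, which together with this lemma yields \cref{l.BGV}.
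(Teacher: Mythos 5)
The paper does not actually prove this lemma: it is quoted verbatim from \cite{BGV} and used as a black box (only the companion statement \cref{l.boundedperiods}, for bounded $p$, is proved here). So the comparison has to be with the argument of \cite{BGV} itself, and your reconstruction does not match it: BGV work with eigenvalues and the invariant (eigen)splitting of the periodic product, not with singular values of window products, and their engine is Ma\~n\'e's dichotomy --- if a periodic sequence of large period has no $N$-dominated splitting of index $i$, then either the $i$-th and $(i+1)$-th Lyapunov exponents of the product are close (in which case a perturbation spread along the whole orbit, in the spirit of \cref{l.homogeneous.space}, equalizes them), or the angle between the corresponding invariant subspaces is small at some point of the orbit (in which case one inserts a single small rotation there, which mixes the two invariant directions and forces the two eigenvalue moduli to cross, by an intermediate-value argument in the rotation angle). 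An induction then merges all moduli, and further bounded-displacement perturbations make the eigenvalues real.

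The genuine gap in your proposal is the third paragraph, and it is not merely a missing computation: the route through singular values of the total product cannot work as stated. Non-domination of the periodic sequence does \emph{not} imply that the singular values of $A_p\cdots A_1$ are within $e^{o(p)}$ of one another. Take a product whose eigenvalue moduli are $2^p$ and $3^p$ but whose two eigendirections come within angle $e^{-cp}$ of each other somewhere along the orbit: there is no $N$-domination for any fixed $N$ once $p$ is large, yet $\log\bigl(\sigma_2/\sigma_1\bigr)$ is of order $p$, so the appeal to \cref{c.singvalues} costs $O(1)$ per factor rather than $o(1)$, and no ``fusion'' of singular values inside a window of length $N$ can repair this, since the singular values of the total product depend on how the window aligns with the (unperturbed) prefix and suffix of length comparable to $p$. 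This small-angle case is precisely the one your sketch parenthesizes away (``and their directions not uniformly transverse'') and it is where the BGV rotation-at-one-point mechanism is indispensable. Separately, even in the close-exponents case, equal singular values of the total product give you $cO$ with $O$ orthogonal and hence equal eigenvalue \emph{moduli}, and your final step (left-multiplying by a bounded orthogonal matrix spread over $p\geq P$ factors to reach $\pm c$ real eigenvalues) is sound; but the passage to ``all singular values equal'' is the theorem, and it is asserted rather than proved.
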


\begin{lemma}\label{l.boundedperiods}
Let $K$ be a compact set in $\GL(d,\RR)$, let $\epsilon>0$ and let $P\in \NN$. There exists an integer $N\in \NN$ and  such that, for any $p\leq P$, if a sequence $A_1,\ldots, A_p$ in $K$ has no $N$-domination of any index, then there is  an $\epsilon$-perturbation $B_1,\ldots, B_p$ of the sequence $A_1,\ldots, A_p$ such that the product $B_p\ldots B_1$ has all eigenvalues with same modulus.
\end{lemma}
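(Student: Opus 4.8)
The plan is to transfer the problem onto the single first return matrix $M=A_p\cdots A_1$ and then combine two standard facts: over a periodic orbit of \emph{bounded} period the absence of dominated splittings of every index forces the eigenvalue moduli of $M$ to be pairwise close, and a matrix whose eigenvalue moduli are pairwise close is a small perturbation away from one all of whose eigenvalues share a common modulus.

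\emph{Step 1 (reduction to the first return matrix).} Since $p\le P$ and the $A_i$ lie in the compact set $K$, the product $M=A_p\cdots A_1$ and the partial product $A_{p-1}\cdots A_1$ both range over compact subsets of $\GL(d,\RR)$; in particular there are constants $R_0=R_0(K,P)$ and $C_0=C_0(K,P)$ with $\|M\|\le R_0$ and $\|(A_{p-1}\cdots A_1)^{-1}\|\le C_0$. Hence, given any $M'$ with $\|M'-M\|<\epsilon/C_0$, the matrices $B_p:=M'(A_{p-1}\cdots A_1)^{-1}$ and $B_i:=A_i$ for $i<p$ form an $\epsilon$-perturbation of the sequence with $B_p\cdots B_1=M'$. (I work with the operator norm; on the relevant compact set it is bi-Lipschitz equivalent to the left-invariant metric used around \cref{l.homogeneous.space} and \cref{c.singvalues}, which could instead be invoked to realize $M'$ by distributing the perturbation over all factors.) It therefore suffices to produce $N\in\NN$, depending only on $d,K,P,\epsilon$, such that whenever the period-$p$ cocycle of $A_1,\ldots,A_p\in K$ (with $p\le P$) has no $N$-dominated splitting of any index, $M$ lies within $\epsilon/C_0$ of a matrix all of whose eigenvalues have the same modulus.

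\emph{Step 2 (absence of domination narrows the modulus spectrum).} The first return maps at the $p$ points of the orbit are conjugate to $M$, hence share its eigenvalue moduli $|\lambda_1|\ge\cdots\ge|\lambda_d|$, and a dominated splitting of index $i$ over the orbit exists precisely when $|\lambda_i|>|\lambda_{i+1}|$; the point is to control its constant. I claim there is a nonincreasing function $\delta\colon\NN\to(0,+\infty)$ with $\delta(N)\to 0$, depending only on $d,K,P$, such that if some $|\lambda_i|/|\lambda_{i+1}|>1+\delta(N)$ then the orbit carries an $N$-dominated splitting of the corresponding index. Indeed, grouping the eigenvalues of $M$ into clusters of near-equal modulus separated by ratio gaps $>1+\delta$ and transporting the associated splitting (into sums of generalized eigenspaces) around the orbit by the partial products, the expansion/contraction ratio across a cluster boundary after $N$ iterates is at least $(1+\delta)^N$ up to polynomial Jordan factors, while the distortion (angles between the transported subspaces at the $p$ orbit points, condition numbers of restricted iterates) is bounded by a resolvent estimate of the form $C(d)\,R_0^{\,O(d)}\delta^{-O(d)}$, using the compactness of $K$ and $p\le P$; domination of that index follows as soon as $N\log(1+\delta)$ overtakes the logarithm of the distortion, i.e. once $N\gtrsim(\mathrm{const}+d\log(1/\delta))/\delta$, and inverting this relation yields the announced $\delta(N)$. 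Consequently ``no $N$-domination of any index'' forces every consecutive ratio $|\lambda_i|/|\lambda_{i+1}|$ to be $\le 1+\delta(N)$. This quantitative claim — that an eigenvalue-modulus gap of $M$ induces a dominated splitting of the orbit with constant controlled by $d,K,P$ and the gap, uniformly over the compact family of first return matrices and over periods $\le P$ — is the only part that is not pure bookkeeping; it is the bounded-period analogue of the mechanism behind the Bonatti--G.--Vivier lemma and is standard (cf. \cite{BGV,BG1}).

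\emph{Step 3 (equalizing the moduli).} If all consecutive ratios are $\le 1+\delta(N)$, then each $|\lambda_i|$ lies within a factor $(1+\delta(N))^{d-1}$ of the geometric mean $\rho$ of all the $|\lambda_j|$. Put $M$ in real Schur form $M=QRQ^{T}$, with $Q$ orthogonal and $R$ block upper triangular with diagonal blocks of size $1$ and $2$ carrying the eigenvalue moduli of $M$. Multiply each diagonal block of $R$ by the positive scalar rescaling its modulus to $\rho$, leaving $Q$ and the strictly block-upper part of $R$ unchanged, and call $M'$ the result. Then every eigenvalue of $M'$ has modulus $\rho$, and since orthogonal conjugation is isometric and a diagonal block of $R$ has norm $\le\|R\|=\|M\|$, one gets $\|M'-M\|\le\bigl((1+\delta(N))^{d-1}-1\bigr)\|M\|\le 2^{d-1}\delta(N)R_0$ for $\delta(N)$ small. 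Choosing $N$ large enough that this bound is $<\epsilon/C_0$ (also absorbing the metric-comparison constant from Step 1), Step 1 delivers the required $\epsilon$-perturbation $B_1,\ldots,B_p$ with $B_p\cdots B_1=M'$ having all eigenvalues of modulus $\rho$. Since this $N$ depends only on $d,K,P,\epsilon$, the proof is complete.
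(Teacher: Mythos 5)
Your argument is correct and follows essentially the same route as the paper's: reduce to perturbing a single factor so as to replace the return matrix $M=A_p\cdots A_1$, show that over periods $\le P$ an eigenvalue-modulus gap forces an $N$-dominated splitting with $N$ controlled uniformly by $d,K,P$ and the gap (so no $N$-domination pinches all moduli together), and then equalize the moduli by rescaling the diagonal blocks of a (block) triangular form of $M$. The only difference is cosmetic: the paper gets the uniform gap-to-domination estimate from a Schur decomposition and a binomial expansion rather than your resolvent/cluster bound, and rescales to $|\lambda_d|$ rather than to the geometric mean.
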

This has probably already been proved many times in the litterature, but we could not find a reference. We do it here for completeness.

\begin{proof}[Idea of the proof of \cref{l.boundedperiods}]
Let $\cA$ be the linear cocycle induced by the sequence  $A_1,\ldots, A_p$ on $V=\ZZ/p\ZZ\times \RR^d$. Denote by $A$ the first return of the cocycle on $\{0\}\times \RR^d$:
$$A=A_n\ldots A_1.$$ Let $\RR^d=E\oplus F$ be a dominated splitting for $\cA$, then $A_{|E}$ is conjugate through an isometry of $E\otimes \CC$ to $\Delta+N$ where $\Delta$ is diagonal and $N$ is nilpotent. Denote by $|\lambda_{E,max}|$ the maximum modulus of the eigenvalues of $A_{|E}$. Denoting by $\|.\|$ the operator norm of a linear map, one has then 
$$A_{|E}^n=\sum \left(n \atop k\right)\Delta^{n-k}N^k$$
\begin{align*}
\|A_{|E}^n\|&\leq \sum_{k=0}^d \left(n \atop k\right)|\lambda_{E,max}|^{n-k}\|N^k\|\\
&\leq  |{\lambda_{E,max}}|^n \sum_{k=0}^d \left(n \atop k\right)\frac{\|N^k\|}{|\lambda_E|^{k}}\\
&\leq  |{\lambda_{E,max}}|^n C
\end{align*}
where $C$ is a constant that depends only on $d$, $K$ and $P$.
Likewise we show that the minimum norm $\mathfrak{m}(A_{|F}^n)=\|A_{|F}^{-1}\|^{-1}$ of $A_{|F}$ satisfies 
$$\mathfrak{m}(A_{|F}^n)\geq |\lambda_{F,min}|^nC'$$
for some other constant $C'$ that depends only on $d$, $K$ and $P$.
Thus, the ratio $|\lambda_{F,min}|/|\lambda_{E,max}|$ gives a strength of domination (that is, an $N$ for which the sequence $A_1,\ldots, A_p$ is $N$-dominated) that depends only on  $d$, $K$ and $P$.

So we proved that for every $\eta>0$ there exists an integer $N\in \NN$ such that  for any $p\leq P$, if a sequence $A_1,\ldots, A_p$ in the compact set $K$ has no $N$-domination of any index, then the eigenvalues $\lambda_1,\ldots, \lambda_d$ of the product $A$ counted with multiplicity and ordered by increasing moduli satisfy $|\lambda_d|/|\lambda_1|<1+\eta$.

Choose an orthogonal basis of $\{0\}\times \RR^d$ that makes the matrix $A$ a block upper triangular matrix and define in this basis
$$B_1:=A_1\times \left(\begin{array}{cccc}
\left|\frac{\lambda_d}{\lambda_1}\right|&&&\\
&\left|\frac{\lambda_d}{\lambda_2}\right|&&\\
&&\ddots &\\
&&& \left|\frac{\lambda_d}{\lambda_d}\right|
\end{array}\right)
$$
and $B_i=A_i$ for $i\geq 2$.
For $\eta>0$ small enough, this is an $\epsilon$-perturbation of the sequence $A_1,\ldots, A_n$. It satisfies the conclusion of the lemma.
\end{proof}

Given a linear map $A$ between two Euclidean vector spaces of dimension $d$, its {\em singular values}
$$\sigma_1(A)\leq \ldots \leq \sigma_d(A)$$
are the eigenvalues of the positive semidefinite operator $\sqrt{A^tA}$, repeated with multiplicity. They equal the semiaxes of the ellipsoid obtained as the image of
the unit ball by $A$.

\begin{lemma}[See {\cite[Theorem A]{BG1}}]\label{t.BochiGour}
A linear cocycle $\cA$ on $V$ admits a dominated splitting of index $i$ if and only if there exists $C>0$ and $\tau <1$ such that for every $x\in X$ and $n\in \NN$
$$\frac{\sigma_{i}(A^{n}(x))}{\sigma_{i+1}(A^{n}(x))}<C\tau^n,$$
where $A^n(x)=A(T^{n-1}x)\ldots A(x)$ is the isomorphism of vector spaces obtained by restriction of $\cA^n$ to $V_{x}$. 
\end{lemma}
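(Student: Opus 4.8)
The plan is to establish the two implications separately. The implication ``dominated splitting $\Rightarrow$ singular-value gap'' is elementary; the converse is the substantial one and is essentially the content of \cite[Theorem A]{BG1}, to which one may ultimately defer.

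First I would treat the easy direction. Suppose $V=E_1\oplus E_2$ is a dominated splitting of index $i$, so $\dim E_1=i$ and, after enlarging $N$ if necessary, $\|\cA^N u\|<\tfrac12\|\cA^N v\|$ for all unit vectors $u\in E_{1,x}$ and $v\in E_{2,x}$. Since $A$ maps $E_1$ into $E_1$ and $E_2$ into $E_2$, the operator norm is submultiplicative along $E_1$ and the conorm $\mathfrak{m}(\cdot)=\|(\cdot)^{-1}\|^{-1}$ is supermultiplicative along $E_2$; combining this with the uniform bound $\|A^{\pm1}\|\le M$ to absorb the part of the iterate that is not a multiple of $N$, one gets $\|A^n(x)|_{E_1(x)}\|\,\mathfrak{m}(A^n(x)|_{E_2(x)})^{-1}\le C_0\lambda^n$ for some $\lambda<1$ and all $n,x$. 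Now Courant--Fischer for singular values gives, because $E_1(x)$ is an $i$-dimensional subspace and $E_2(x)$ a $(d-i)$-dimensional subspace, $\sigma_i(A^n(x))\le\|A^n(x)|_{E_1(x)}\|$ and $\sigma_{i+1}(A^n(x))\ge\mathfrak{m}(A^n(x)|_{E_2(x)})$; dividing the first by the second yields the stated estimate with $C=C_0$, $\tau=\lambda$.

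For the converse, assume $\sigma_i(A^n(x))/\sigma_{i+1}(A^n(x))\le C\tau^n$ for all $n,x$. For $n$ large this forces $\sigma_i(A^n(x))<\sigma_{i+1}(A^n(x))$, so the $i$-dimensional subspace $U_n(x)\subset V_x$ spanned by the right singular vectors of $A^n(x)$ associated with its $i$ smallest singular values is well defined and depends continuously on $x$. The key technical input is a perturbation estimate: if $L$ has singular gap $\sigma_i(L)/\sigma_{i+1}(L)=\delta$ and $P$ is invertible with $\|P^{\pm1}\|\le M$, then the bottom-$i$ right singular subspace of $PL$ lies within $c(M)\delta$ of that of $L$ in the Grassmannian (the drift cannot be larger, since a vector in the small subspace of $PL$ cannot have a large component in the complementary singular subspace of $L$ without making its image too big). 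Applying this with $L=A^n(x)$ and $P=A(T^nx)$ gives $d(U_{n+1}(x),U_n(x))\le c(M)C\tau^n$, so $(U_n(x))_n$ is uniformly Cauchy and converges to a continuous $i$-dimensional bundle $E_1$, which is $\cA$-invariant because $A(x)$ intertwines the approximating subspaces up to an error tending to $0$. A symmetric construction applied to the inverse cocycle $x\mapsto (A(T^{-1}x))^{-1}$ --- equivalently, taking limits of the most expanded $(d-i)$-dimensional subspaces along backward iterates --- produces a continuous $\cA$-invariant $(d-i)$-dimensional bundle $E_2$.

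It then remains to check transversality $V_x=E_1(x)\oplus E_2(x)$ (a nonzero vector in the intersection would be contracted at the $E_1$-rate and expanded at the $E_2$-rate, contradicting the gap) and the domination inequality $\|A^N|_{E_1}\|<\tfrac12\,\mathfrak{m}(A^N|_{E_2})$ for suitable $N$. This last point is the main obstacle: unlike in the easy direction one cannot argue that $\|A^n|_{E_1}\|\approx\sigma_i(A^n)$, since the error coming from the drift of $E_1(x)$ away from $U_n(x)$ a priori involves the largest singular value $\sigma_d(A^n(x))$, which need not be comparable to $\sigma_{i+1}(A^n(x))$. The cleanest way around this is to reduce to the one-dimensional case by passing to exterior powers: the index-$i$ gap for $\cA$ is exactly a gap between the two smallest singular values of $\Lambda^i\cA$, equivalently a gap between the two largest singular values of $\Lambda^i(\cA^{-1})$; in that top-gap, one-dimensional situation the error term appearing in the rate estimate involves the largest singular value both in the numerator and in the denominator, hence is genuinely exponentially small, so the dominant line is controlled and (being decomposable by Plücker) equals $\Lambda^iE_1(x)$, and likewise $\Lambda^{d-i}\cA$ yields $\Lambda^{d-i}E_2(x)$. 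One finally transfers the one-dimensional domination back to $V$ using that $E_1\oplus E_2=V$ with uniformly bounded angle. Most economically, one simply invokes \cite[Theorem A]{BG1}.
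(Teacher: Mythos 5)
The paper does not prove this lemma: it is imported wholesale as \cite[Theorem A]{BG1}, so there is no internal argument to compare yours against. Your sketch is nonetheless sound and worth assessing on its own terms. The forward implication via the Courant--Fischer bounds $\sigma_i(A^n(x))\le\|A^n(x)|_{E_1(x)}\|$ and $\sigma_{i+1}(A^n(x))\ge\mathfrak{m}(A^n(x)|_{E_2(x)})$ is complete and correct. In the converse, the construction of $E_1$ as the limit of the bottom-$i$ right singular subspaces is the standard one, your perturbation estimate is correct (writing a unit $u'\in U(PL)$ as $u+w$ with $w\perp U(L)$ gives $\sigma_{i+1}(L)\|w\|\le\|Lu'\|\le M^2\sigma_i(L)$, hence $\|w\|\le M^2\delta$), and you correctly isolate the real obstacle: Grassmannian closeness of $E_1(x)$ to $U_n(x)$ only controls $\|A^n(x)|_{E_1(x)}\|$ up to an error weighted by $\sigma_d(A^n(x))$. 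The exterior-power reduction is the standard remedy, but note that the transfer back from $\Lambda^iV$ to $V$ is the one step you leave genuinely unargued, since $\|\Lambda^iA^n|_{\Lambda^iE_1}\|$ measures $i$-dimensional volume distortion rather than the operator norm of $A^n|_{E_1}$; closing that gap (or establishing the uniform transversality of $E_1$ and $E_2$ quantitatively) requires the extra work done in \cite{BG1}. Deferring precisely that to the reference, as the paper itself defers the entire statement, is legitimate.
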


\begin{proof}[Proof of the $r=0$ case of \cref{c.complexdominationdichotconje}] In view of \cref{p.abc}, it is enough to prove that if there is no domination of index $i$, then an arbitrarily small perturbation creates an $i$-elliptic point. 

Let  $T\colon X\to X$ be a transitive subshift of finite type. Assume that it  has no domination of index $i$. Then there is a dominated splitting $V=E\oplus F\oplus G$ such that 
\begin{itemize}
\item $E$ has dimension $<i$ and $G$ has dimension $<d-i$,
\item the restricted cocycle $\cA_{|F}$ has no domination of any index.
\end{itemize}
Here $E$ and $G$ may possibly be trivial bundles.
Since the periodic points are dense in $X$, for each $N\in \NN$ there exists a periodic point $x$ such that the cocycle restricted to the restricted bundle $F_{|\Orb(x)}$ has no $N$-domination of any index. One then applies \cref{l.BGV} to find an arbitrarily small perturbation of that cocycle and another arbitrarily small perturbation to change some lyapunov exponents within $F_{|\Orb(x)}$  (if $F$ has dimension $>2$) so that, extending the perturbation of $\cA_{|F}$ thus obtained into a $C^0$-perturbation $\cB$ of $\cA$, we have a dominated splitting $TM_{|\Orb x}=E'\oplus F'\oplus G'$ for $\cB$ where
\begin{itemize}
\item $E'$ has dimension $i-1$ and $G'$ has dimension $d-i-1$,
\item the first return map $\cB^p_{|F'}$ restricted to $F'$ has a pair of eigenvalues of same modulus. 
\end{itemize}
If $\cB^p_{|F'}$ preserves orientation, then a further perturbation gives complex eigenvalues and thus makes $x$ an $i$-elliptic point. Extend that perturbation of $\cA_{|F'}$ to a $C^0$-perturbation of $\cA$ and we are done. Unfortunately $\cB^p_{|F'}$ may not preserve orientation. The properties of a transitive subshift of finite type give  a homoclinic point for $x$, that is, a point $y$ whose $\alpha$- and $\omega$-limits are $\Orb(x)$. 
By the same reasoning as in the proof of \cref{t.cocyclewithsimpleLyap}, one finds a further perturbation that extends the dominated splitting  $E'\oplus F'\oplus G'$ to the closure $K=\overline{\Orb(y)}$ of the corresponding homoclinic orbit. Then using the shadowing property of $T$ one finds a sequence of periodic points $z_n$ that shadow twice the orbit of $y$ and whose orbits tend to $K$ for the Hausdorff topology. For $n$ large enough the dominated splitting $E'\oplus F'\oplus G'$ extends above the orbit of $z_n$, and the first return map restricted to $F'_{z_n}$ is orientable. Moreover, for all $N\in \NN$, there exists $n$ such that the cocycle restricted to the bundle $F'_{|\Orb(z_n)}$ admits no $N$-domination. Apply then again \cref{l.BGV} so that through a small perturbation the first return map on $F'_{z_n}$ has a pair of eigenvalues with same moduli. Then a further perturbation gives a pair of complex conjugate eigenvalues and $z_n$ becomes an $i$-elliptic point.
\end{proof}

\begin{proof}[Short proof of the $r=0$ case of \cref{c.ba}]
Consider a linear cocycle  $\cA\colon V \to V$ fibering on a homeomorphism $T\colon X\to X$. Let us denote by $\Sigma$ the closure of the set of periodic points of $T$.

Assume that $\cA$ has no dominated splitting of index $i$. Let us first assume that this lack of dominated splitting is visible by restriction to $\Sigma$, then 
as in the previous proof, one finds arbitrarily small $C^0$-perturbation $\cB$ of $\cA$ such that there is a $\cB$-invariant splitting $TM_{|\Orb x}=E'\oplus F'\oplus G'$ where
\begin{itemize}
\item $E'$ has dimension $i-1$ and $G'$ has dimension $d-i-1$,
\item  $E'\prec F'\prec G'$,
\item the first return map $\cB^p_{|F}$ restricted to $F'$ has a pair eigenvalues of same modulus. 
\end{itemize}

Then $\PP F'$ is a periodic normally hyperbolic finite union of circles in $\PP TM_{|\Orb P}$ for the projective map $\PP \cB$. Therefore if the cocycle $\PP\cB$ were structurally stable, then a topological conjugacy $h$ to a neighboring $\PP\cC$ and fibering over $\Id_X$ and $C^0$-close to $\Id_{\PP V}$ would have to send the normally hyperbolic circles $\PP F'$ on the normally hyperbolic circles $\PP F''$, where $F''$ is the continuation of $F'$ for the perturbation $\cC$.
On the other hand, the first return projective map $\PP \cB^p$ is either conjugate to a rotation or to a symmetry of each of the circles. In the first case, a $C^0$-perturbation of $\cB$ changes the rotation number. In the second case, a $C^0$-perturbation transforms it into a North-South dynamics of the circles. Thus $\PP\cB$ is not structurally stable. Therefore $\PP\cA$ is not either.

We are now left to consider the case where the set $\Delta$ of periodic points has a dominated splitting of index $i$.
The contrapositive of  \cref{t.BochiGour} gives a sequence $y_k\in X\setminus \Delta$ and a sequence of integers $n_k\to +\infty$ such that 
\begin{equation}
\frac{1}{n_k}\log\frac{\sigma_{i}(A^{n_k}(y_k))}{\sigma_{i+1}(A^{n_k}(y_k))}\to 0.\label{e.inegalite}
\end{equation}
The domination of index $i$ by restriction to $\Delta$ means that we may assume that the points $y_k$ are aperiodic. 
By \cref{c.singvalues}, for every $\epsilon>0$ and $N\in \NN$, there exist an aperiodic point $x\in X$, an integer $n\geq N$, and a sequence of linear maps $B_0,\ldots, B_{n-1}$ that are $\epsilon$-perturbations of the linear maps $A_k=A(T^kx)$, and such that  $$\sigma_{i-1}(A^n(x))<\sigma_{i}(B^n(x))=\sigma_{i+1}(B^n(x))<\sigma_{i+2}(A^n(x)),$$
  $$\sigma_{i-1}(A_{n-1}\ldots A_0)<\sigma_{i}(B_{n-1}\ldots B_0)=\sigma_{i+1}(B_{n-1}\ldots B_0)<\sigma_{i+2}(A_{n-1}\ldots A_0),$$
for each $0<i<d$, where $\sigma_0:=0$ and $\sigma_{d+1}=+\infty$.
There is a unique  $2$-plane $E_0\in V_{x}$ such that the singular values of the restriction ${B_{n-1}\ldots B_0}_{|E_0}$ are $\sigma_{i}(B_{n-1}\ldots B_0)=\sigma_{i+1}(B_{n-1}\ldots B_0)$. 
Let $E_k\subset V_{T^kx}$ be the image of $E_0$ by the linear map $B_{k-1}\ldots B_0$. Let $F_0\subset V_{x}$ be the orthogonal of $E_0$ and let $F_k\subset V_{T^kx}$ be the image of $F_0$ by $B_{k-1}\circ\ldots\circ  B_0$.
In particular, the projective map $\PP(B_{n-1}\ldots B_0)$ sends isometrically the circle $\PP E_0$ on the circle $\PP E_{n-1}$.

Then each map $B_k$ induces a quotient isomorphism $\tilde{B}_k$ between the Euclidean quotient spaces $V_{T^kx}/F_k$ and $V_{T^{k+1}x}/F_{k+1}$. Since $F_0\perp E_0$ and  $F_{n-1}\perp E_{n-1}$, the composition 
$\tilde{B}_{n-1}\circ\ldots\circ  \tilde{B}_0$ identifies then to the restriction $B^n(x)_{|E_0}$. 
Applying again 
 \cref{c.singvalues} one obtains a sequence $\tilde{C}_k$ of $\epsilon$-perturbations of  the linear maps $\tilde{B}_k$ such that the ratio between the two singular values of the product  $\tilde{C}_{n-1}\circ\ldots\circ  \tilde{C}_0$ is of the order of $e^{n\epsilon}$. The linear maps $\tilde{C}_k$ may then be lifted into perturbations $C_k$ of the linear maps $B_k$, such that $C_k$ sends $F_k$ on $F_{k+1}$. 
 Using partitions of unity, we realize the sequences $B_k$ and $C_k$ of perturbations of the sequence of linear maps $A_k$ by two continuous linear cocycles $\cB$ and $\cC$ that are $\epsilon$-perturbations of $\cA$, so that 
 \begin{itemize} 
 \item $\PP\cB^n$ sends the circle $\PP E_0$ isometrically on the circle $\PP E_{n-1}$,
 \item $\cC^n$ sends $F_0$ on $F_{n}$ and the induced quotient map from $V_{x}/F_0$ to $V_{T^nx}/F_n$ has nonconformity on the order of $e^{n\epsilon}$.
 \end{itemize}
Then if $N$ is large (recall that we have $n\geq N$) and if there is a homeomorphism $h_k$ of $\PP V$ fibering on $\Id_X$ and conjugating $\PP\cB$ to $\PP\cC$, that homeomorphism cannot be close to identity both by restriction to the circle $\PP E_0$ and to the circle $\PP E_{n-1}$. 
Thus $\PP\cA$ is not  structurally stable.
\end{proof}

\appendix

\section{Comparison with  Avila and Krikorian's 'variation of the fibered rotation numbers'  for  $\SL(2,\RR)$-cocycles.}\label{a.avilakrikorian}
For the sake of completeness, we reproduce here (with some notational simplifications) the construction by Avila-Krikorian~\cite{AK} for  $\SL(2,\RR)$-cocycles of a 'variation of the fibered rotation numbers'. We show that it coincides with our rotation number, up to multiplication by $-2$.

\bigskip

\noindent {\em Setting:} consider a path of $\SL(2,\RR)$-valued linear cocycles $\tA=\bigl(\cA_t\bigr)_{t\in [0,1]}$ over a fixed dynamics on the base $T\colon X\to X$. Through complexification, Avila and Krikorian define a $\CC$-valued additive cocycle over some continuous fibered extension 
$$T_{\tA}\colon X\times \DD\times \DD\to X\times \DD\times \DD$$ of $T$ whose Birkhoff averages tend a.e. to a function $\zeta_{\tA}\colon X\times \DD\times \DD\to \CC$. They show that this function only depends on $x\in X$. Moreover, wherever defined,
\begin{enumerate}
\item its real part is, up to multiplication by $-2$, the average growth rate of the winding number of the paths of matrices $\bigl(\cA^k_t(x)\bigr)_t$, that is, the rotation number of the path $\tA$ at $x$, as we defined it in this paper,\label{i.windingAK1}
\item  its imaginary part is the Lyapunov exponent at $x$.\label{i.LyapAK2}
\end{enumerate}

\medskip

We reproduce here their proof of convergence of the real part and we explain \cref{i.windingAK1}. 
\bigskip

\subsection{Avila-Krikorian fibered rotation number for $\SU(1,1)$-cocyles.}
Consider the natural projective action of $\SL(2,\CC)$ on the Riemann sphere $\overline{\CC}=\PP(\CC^2)$ by M\"obius transformations. The subgroup of matrices in $\SL(2,\CC)$ that leave invariant the Poincar\'e disk $\DD=\{[z:1], |z|<1\}$ is the group $\SU(1,1)$ of matrices that preserve the quadratic form $q(x,y)=|x|^2-|y|^2$ on $\CC^2$.  

Denote by $(A,z)\mapsto A\cdot z$ the corresponding action on $\overline{\DD}$ identified to the closed unit disk of $\CC$.
Given a matrix $A\in \SU(1,1)$ and $z\in \overline{\DD}$, one has in $\CC^2$ the equality $A \left(\begin{array}{c}z\\
1
\end{array}\right)=\tau_{A}(z)\cdot \left(\begin{array}{c}A\cdot z\\
1
\end{array}\right)$
for some unique complex number $\tau_{A}(z)\in \CC\setminus\{0\}$. This satisfies a multiplicative cocycle relation:
\begin{equation}
\tau_{AB}(z)=\tau_{A}(B\cdot z)\tau_{B}(z)\label{e.multcocycle}
\end{equation}

A path $\bigl(A_t\bigr)_{t\in[0,1]}$ of matrices in $\SU(1,1)$ and a path $\bigl(z_t\bigr)_{t\in[0,1]}$ in $\overline{\DD}$ give then a path  $\tau_{A_t}(z_t)$ in $\CC\setminus\{0\}$. Take any lift $\gamma_t$ of that path by the map $z\mapsto e^{i2\pi z}$, then the difference $\delta_{z_0,z_1}=\gamma_1-\gamma_0\in \CC$ only depends on the isotopy class with fixed extremities of $\bigl( A_t\bigr)$ and, by simple-connectedness of $\overline{\DD}$,  on $z_0$ and $z_1$. Note that the real part $\Re\delta_{z_0,z_1}$  is the winding number of the path $\tau_{A_tx}(z_t)$ around $0\in \CC$.
Write
$A_t=\left(\begin{array}{cc}u_t&\bar{v}_t\\
v_t&\bar{u}_t
\end{array}
\right), u_t,v_t\in \CC$, then $\tau_{A_t}(z)=v_tz+\bar{u}_t$. In particular, the connected set $\tau_{A_t}( \overline{\DD})$ does not meet the line $\RR i\bar{u}_t$. Thus, given two paths $z_t$ and $z'_t$ in $\overline{\DD}$, the difference between the winding numbers of $\tau_{A_tx}(z_t)$ and $\tau_{A_tx}(z'_t)$ is less than $1$, that is, 
\begin{equation}\label{e.deltaofwinding}
\left|\Re\left(\delta_{z_0,z_1}-\delta_{z'_0,z'_1}\right)\right|<1.
\end{equation}

Consider now a path of $\SU(1,1)$-cocycles $\tA=\bigl(\cA_t=(T,A_t)\bigr)_{t\in [0,1]}$, where $T\colon X\to X$ is a continuous map and  $A_t\colon X \to \SU(1,1)$ is a path of continuous maps.
Define the continuous map $$T_\tA\colon X\times \overline{\DD}\times  \overline{\DD} \to X\times  \overline{\DD}\times  \overline{\DD}$$ by $T_\tA(x,z_0,z_1)=(Tx,A_0x\cdot z_0, A_1x\cdot  z_1)$. Define an additive cocycle 
$(T_\tA,\delta_\tA)$: $$\delta_\tA\colon X\times \overline{\DD}\times  \overline{\DD}\to \CC$$  by taking for $\delta_\tA(x,z_0,z_1)$ the complex number $\delta_{z_0,z_1}$ obtained as previously with the path of matrices  
$\bigl(A_tx\bigr)$ and any path $z_t$ joining $z_0$ to $z_1$ in $\overline{\DD}$. This is a well-defined cocycle.

As a consequence of \cref{e.multcocycle}, $(T_\tA,\delta_\tA)^n$ is the cocycle $(T_{\tA^n},\delta_{\tA^n})$, where $\tA^n$ is the path of iterated cocycles $\bigl(\cA^n_t\bigr)$. With \cref{e.deltaofwinding}, this implies that the real part of the $n$-th  Birkhoff average of the cocycle $(T_\tA,\delta_\tA)$ satisfies 
$$|B_n(x,z_0,z_1)-B_n(x,z'_0,z'_1)|<1/n,$$
for any numbers $z_0,z_1,z'_0,z'_1\in \overline{\DD}$.
It follows from the Birkhoff Ergodic Theorem that, for any $T$-invariant measure $\mu$ supported in $X$, the real part of the Birkhoff averages converge $\mu$-a.e. to a function that depends only on $x\in X$, which we may denote by $\delta\rho_\tA\colon X\to \RR$. 

\subsection{The case of $\SL(2,\RR)$-cocycles, and correspondance with our rotation number. }
Recall that $\SL(2,\RR)$ is the set of matrices $A$ whose projective action fixes the upper half-plane $$\HH=\{[z:1], \Im z> 0\}\subset \PP \CC^2.$$
Multiplication by the matrix 
$$Q=\frac{-1}{1+i}\left(\begin{array}{cc}
1& -i\\
1& i
\end{array}\right)\in \SL(2,\CC)$$
sends $\HH$ on the Poincar\'e disk and conjugacy $A\mapsto QAQ^{-1}$ is an isomorphism from $\SL(2,\RR)$ to $\SU(1,1)$.

With a path of $\SL(2,\RR)$-cocycles $\tA=\bigl(\cA_t=(T,A_t)\bigr)_{t\in [0,1]}$, associate the path   of $\SU(1,1)$-cocycles $\mathring{\tA}=\bigl(T,QA_tQ^{-1}\bigr)_{t\in [0,1]}$. The authors (in slightly different notations) call $\delta\rho_{\mathring{\tA}}$ the {\em average variation of the fibered rotation number} of the path of linear cocycles $\tA$. 

Multiplication by $Q$ sends the boundary $\partial \HH$ on the boundary $\partial \DD$ by an orientation preserving isometry. That is, given $w$ in the circle $\PP(\RR^2)\equiv \partial \HH$, the winding number of the path $\PP\cA_t \cdot w$ is the winding number of the path $\mathring{\tA}\cdot Qw$ in  $\partial \DD$.
Given a path $A_t=\left(\begin{array}{cc}u_t&\bar{v}_t\\
v_t&\bar{u}_t
\end{array}
\right)$ in $\SU(1,1)$, we have $A_t\cdot 1=\frac{u_t+\bar{v_t}}{v_t+\bar{u_t}}$, that is, the winding number of the path $A_t\cdot 1$ of $\partial \DD$ is $-2$ times the winding number of the path $\tau_{A_t}(1)=v_t+\bar{u_t}$ in $\CC$.

Thus, given a path of $\SL(2,\RR)$-cocycles $\tA=\cA_t$, we have the following relation between our rotation number and that of \cite{AK}:
$$\rho(\cA_t,x)=-2\delta\rho_{\mathring{\tA}},$$
which is what we wanted to prove.

\end{document}